\title{Separable Integer Partition Classes} 
\author{by\\George E. Andrews}
\date{}
\numberwithin{equation}{section}
\newtheorem{theorem}{Theorem}
\newtheorem*{theorem*}{Theorem}
\newtheorem{corollary}[theorem]{Corollary}
\newtheorem*{1GGtheorem}{First G\"ollnitz-Gordon Theorem}
\newtheorem*{Stheorem}{Schur's Theorem}
\newtheorem*{RStheorem}{Refinement of Schur's Theorem}
\newtheorem*{Glasgow}{Glasgow Mod 8 Theorem}
\theoremstyle{remark}
\newtheorem*{remark}{Remark}
\theoremstyle{definition}
\newtheorem{lemma}[theorem]{Lemma}
\newtheorem*{lemma*}{Lemma}
\newtheorem*{conjecture*}{Conjecture}
\newcommand{\E}{\mathcal{E}}
\newcommand{\BS}{\mathcal{B}_\mathcal{S}}
\newcommand{\PS}{\mathcal{P}_\mathcal{S}}
\DeclareMathOperator{\MEX}{MEX}
\begin{document}
\maketitle

AMS Classification: 11P83

Key Words: Partitions, Separable integer partition classes (SIP), Rogers-Ramanujan.

\begin{abstract}
A classical method for partition generating function is developed into a tool with wide applications. New expansions of well-known theorems are derived, and new results for partitions with $n$ copies of $n$ are presented.
\end{abstract}

\section{Introduction}\label{Intro}

The object of this paper is to systematize a process in the theory of integer partition that really dates back to Euler. It is epitomized in the partition-theoretic interpretation of three classical identities:
\begin{align}
\label{1.1} & \sum_{n\geq 0} \frac{q^n}{(q;q)_n}=\frac{1}{(q;q)_\infty},\\
\label{1.2} & \sum_{n\geq 0} \frac{q^{n(n+1)/2}}{(q;q)_n}=(-q;q)_\infty,
\end{align}
and
\begin{equation}
\label{1.3} \sum_{n\geq 0} \frac{q^{n^2}}{(q;q)_n} = \frac{1}{(q;q^5)_\infty(q^4;q^5)_\infty},
\end{equation}
where
\begin{equation}
\label{1.4} (A;q)_N=(1-A)(1-Aq)\cdots(1-Aq^{N-1}).
\end{equation}

Equations \eqref{1.1} and \eqref{1.2} are Euler's \cite[p.~19]{9} while \eqref{1.3} is the first of the celebrated Rogers-Ramanujan identities \cite[Ch.~7]{9}.

In section~\ref{section2}, we will analyze \eqref{1.1}-\eqref{1.3} from the point of view of separable integer partition classes. 

A separable integer partition class (SIP), $\mathcal{P}$, with modulus $k$, is a subset of all the integer partitions. In addition, there is a subset $\mathcal{B}\subset\mathcal{P}$ ($\mathcal{B}$ is called the basis of $\mathcal{P}$) such that for each integer $n\geq 1$, the number of elements of $\mathcal{B}$ with $n$ parts is finite and every element of $\mathcal{P}$ with $n$ parts is uniquely of the form
\begin{equation}
\label{1.5} (b_1+\pi_1)+(b_2+\pi_2)+\cdots+(b_n+\pi_n)
\end{equation}
where $0<b_1\leq b_2 \leq \ldots \leq b_n$ are a partition in $\mathcal{B}$ and $0\leq \pi_1\leq \pi_2 \leq \ldots \leq \pi_n$ is a partition into $n$ nonnegative parts, whose only restriction is that each part is divisible by $k$. Furthermore, all partitions of the form \eqref{1.6} are in $\mathcal{P}$. 

As we will see in section~\ref{section2}, each of \eqref{1.1}-\eqref{1.3} can be developed from this point of view with modulus $k=1$. However. this setting allows a similar examination of the first G\"ollnitz-Gordon identity \cite{4} in section~\ref{section3}:
\begin{equation}
\label{1.6} \sum_{n\geq 0} \frac{q^{n^2}(-q;q^2)_n}{(q^2;q^2)_n}=\frac{1}{(q;q^8)_\infty(q^4;q^8)_\infty(q^7;q^8)_\infty}.
\end{equation}
More surprising is an analysis of Schur's 1926 partition theorem \cite{20} in section~4. It is interesting to note that this analysis leads naturally to full proofs of both \eqref{1.4} and Schur's theorem.

In section~\ref{section5}, we examine a new application. In section~\ref{section6}, we extend the concept of SIP classes to partitions with $n$ copies of $n$ \cite{1}. Section~\ref{section7} overpartitions.
In the final section, we discuss open questions and note that the idea first arose in dynamical systems concerning billiard trajectories \cite{12}.

\section{General Theory and Classical Identities}\label{section2}

The infinite series in each of \eqref{1.1}-\eqref{1.3} fit neatly into the S.I.P. program with modulus $k=1$. We begin with \eqref{1.1}. In this case, we let $\mathcal{P}_\mathds{N}$ be the set of all integer partitions. Now for each $n$, there is only one element of $\mathcal{B}_\mathds{N}$ with $n$ parts, namely
$$
1+1+1+\cdots+1,
$$
and every element of $\mathcal{P}_\mathds{N}$ with $n$ parts, say $\pi_1+\pi_2+\cdots+\pi_n$ can be written
$$
(1+(\pi_1-1))+(1+(\pi_2-1))+\cdots+(1+(\pi_n-1)).
$$
The generating function for the elements of $\mathcal{P}_\mathds{N}$ with $n$ parts is therefore
$$
\frac{q^{1+1+\cdots+1}}{(1-q)(1-q^2)\ldots(1-q^n)}=\frac{q^n}{(q;q)_n}.
$$
Summing over all $n$ yields
$$
\sum_{n\geq 0} \frac{q^n}{(q;q)_n},
$$
the left-hand side of \eqref{1.1}.

Next we consider $\mathcal{P}_\mathcal{D}$, the integer partitions that have distinct parts. Here for each $n$ there is again exactly one partition in $\mathcal{B}_\mathcal{D}$ with $n$ parts, namely
$$
1+2+3+\cdots+n,
$$
and every element of $\mathcal{P}_\mathcal{D}$, say
$$
\pi_1+\pi_2+\cdots+\pi_n\qquad(0\leq \pi_1\leq \pi_2 \leq \ldots \leq \pi_n)
$$
can be written
$$
(1+\pi_1)+(2+\pi_2)+\cdots+(n+\pi_n).
$$
Furthermore
$$
\pi_1+\pi_2+\cdots+\pi_n
$$
constitutes an ordinary partition into $n$ nonnegative parts.

The generating function is therefore
$$
\sum_{n\geq 0} \frac{q^{1+2+\cdots+n}}{(q;q)_n}=\sum_{n\geq 0} \frac{q^{n(n+1)/2}}{(q;q)_n}.
$$

Finally, if $\mathcal{P}_R$ ($R$ for Rogers and Ramanujan) is the set of integer partitions where the difference between parts is $\geq 2$, the only element of $\mathcal{B}_R$ with $n$ parts is
$$
1+3+5+\cdots+(2n-1),
$$ 
and, as with \eqref{1.2}, we obtain the generating function for the partitions in $\mathcal{P}_R$ as
$$
\sum_{n\geq 0} \frac{q^{1+3+5+\cdots+(2n-1)}}{(q;q)_n}=\sum_{n\geq 0} \frac{q^{n^2}}{(q;q)_n}.
$$

The above analysis of the series in \eqref{1.1}-\eqref{1.3} is far from new. Indeed these are proofs whose ideas date back to Euler and were discussed fully in centuries old number theory and combinatorics books (cf.~\cite[Sec.~7, Ch.~III]{19}, \cite[Ch.~19]{22}) for this way of looking at \eqref{1.1}-\eqref{1.3}.

Perhaps the reason that this type of study has not gone farther is the fact that in each of the classical cases there was only one element of $\mathcal{B}$ with $n$ parts.

As we will see in the remaining sections, there are many SIP classes with a number of elements of $\mathcal{B}$ with $n$ parts. The real challenge in each instance will be to determine the generating function for $\mathcal{B}$. Obviously if we denote by $b_\mathcal{B}(n)$ the generating function for these elements of $\mathcal{B}$, then the generating function for all the partitions in $\mathcal{P}$ is given by
$$
\sum_{n\geq 0} \frac{b_\mathcal{B}(n)}{(q^k;q^k)_n},
$$
where $k$ is the modulus associated with $\mathcal{P}$. In the three cases just considered, we hardly need to think about $\mathcal{B}$ since in each case, there is only one element of $\mathcal{B}$ with $n$ parts.

So how does one determine $b_\mathcal{B}(n)$. The idea is to refine one's consideration of $b_\mathcal{B}(n,h)$ where $b_\mathcal{B}(n,h)$ is the generating function for those elements of $\mathcal{B}$ with $n$ parts and largest part $h$. Clearly
$$
b_\mathcal{B}(n)=\sum_{h\geq 0} b_\mathcal{B}(n,h).
$$

In practice we shall obtain recurrences for the $b_\mathcal{B}(n,h)$. The recurrences will arise by noting the parts in the partitions in $\mathcal{B}$ can't get too far apart. Namely if $h$ is too far from the next part then $k$ can be subtracted from $h$ yielding another partition in $\mathcal{B}$ and contradicting the uniqueness of the decomposition \eqref{1.6}. 

The previous paragraph is vague because each individual SIP class provides different meaning for ``too far from.'' 

The following theorem provides s large number of SIP classes and will facilitate the subsequent theorems in sections~\ref{section3}-\ref{section5}.

\begin{theorem}\label{theorem1} Let $\{c_1,\ldots,c_k\}$ be a set of positive integers with $c_r\equiv r$ (mod $k$) and $\{d_1,\ldots,d_k\}$ be a set of nonnegative integers. Let $\mathcal{P}$ be the set of all integers partitions
$$
b_1+b_2+\cdots+b_j
$$
where $0<b_1\leq b_2 \leq \ldots \leq b_j$, and for $1\leq r\leq k$ and each $b_i$ if $b_i \equiv r$ (mod $k$), then $b_i\geq c_r$, and if $i>1$, $b_i-b_{i-1}\geq d_r$.

Then $\mathcal{P}$ is an SIP class with modulus $k$, and $\mathcal{B}$ consists of all those partitions
$$
\beta_1+\beta_2+\cdots+\beta_j
$$
where if $\beta_1\equiv r$ (mod $k$) then $\beta_1=c_r$, and for $q\leq i\leq j$, if $\beta_i\equiv r$ (mod $k$), then 
$$
d_r\leq \beta_i-\beta_{i-1}<d_r+k.
$$
\end{theorem}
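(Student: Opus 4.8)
The plan is to prove the two defining features of an SIP class directly: that every $\pi$-shifted sum \eqref{1.5} built from $\mathcal{B}$ lies in $\mathcal{P}$, and conversely that every element of $\mathcal{P}$ with $n$ parts admits a \emph{unique} such representation. The whole argument rests on a single observation: once the residues $r_i$ (where $b_i\equiv r_i\pmod{k}$) of a partition $b_1\le\cdots\le b_n$ in $\mathcal{P}$ are fixed, the basis conditions leave no freedom in $\beta$. Indeed $\beta_i\equiv b_i\pmod{k}$ is forced, since $\pi_i=b_i-\beta_i$ must be divisible by $k$; then $\beta_1=c_{r_1}$ is pinned, and for $i\ge 2$ the requirement $d_{r_i}\le\beta_i-\beta_{i-1}<d_{r_i}+k$ singles out the unique integer congruent to $r_i$ modulo $k$ in a window of $k$ consecutive integers. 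This already delivers uniqueness; the work is to show the resulting $\beta$ and $\pi$ are legitimate.

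I would construct $\beta$ from the bottom up by exactly this recipe: set $\beta_1=c_{r_1}$ and, for $i\ge 2$, let $\beta_i$ be the unique integer with $\beta_i\equiv b_i\pmod{k}$ and $\beta_{i-1}+d_{r_i}\le\beta_i<\beta_{i-1}+d_{r_i}+k$, and put $\pi_i=b_i-\beta_i$. That $k\mid\pi_i$ is immediate. The key inequality is $\beta_i-\beta_{i-1}\le b_i-b_{i-1}$: since $b\in\mathcal{P}$ we have $b_i-b_{i-1}\ge d_{r_i}$, and $b_i-b_{i-1}\equiv\beta_i-\beta_{i-1}\pmod{k}$, so $b_i-b_{i-1}$ is an integer in the residue class of $\beta_i-\beta_{i-1}$ lying at or above $d_{r_i}$; as $\beta_i-\beta_{i-1}$ is by construction the \emph{least} such integer, the inequality follows. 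Hence $\pi_i-\pi_{i-1}=(b_i-b_{i-1})-(\beta_i-\beta_{i-1})\ge 0$, so $\pi_1\le\cdots\le\pi_n$, and since $\pi_1=b_1-c_{r_1}\ge 0$ (because $b_1\ge c_{r_1}$), all $\pi_i\ge 0$. Thus $\pi$ is a partition into $n$ nonnegative parts each divisible by $k$, as required.

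The converse inclusion is routine: if $\beta\in\mathcal{B}$ and $\pi$ is any such modulus-$k$ partition, then $b_i=\beta_i+\pi_i$ is positive and nondecreasing, satisfies $b_i\ge\beta_i\ge c_{r_i}$, and $b_i-b_{i-1}\ge\beta_i-\beta_{i-1}\ge d_{r_i}$, so $b\in\mathcal{P}$. The real obstacle is the one step I have glossed over, that the greedily constructed $\beta$ actually lies in $\mathcal{B}\subseteq\mathcal{P}$. Monotonicity and the gap bounds $\beta_i-\beta_{i-1}\ge d_{r_i}$ hold by construction, so everything reduces to the floor conditions $\beta_i\ge c_{r_i}$. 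I expect this to be the crux and would prove it by induction on $i$: the base case is the pinned value $\beta_1=c_{r_1}$, and the inductive step must combine $\beta_i\ge\beta_{i-1}+d_{r_i}\ge c_{r_{i-1}}+d_{r_i}$ with the arithmetic relating the floors $c_r$ and the gaps $d_r$ to conclude $\beta_i\ge c_{r_i}$. Pinning down which relations among the $c_r$ and $d_r$ guarantee that greedy reduction never drives a part below its floor is exactly where the consistency of the defining data must be used, and where I would concentrate the effort. Finiteness of the $n$-part elements of $\mathcal{B}$ is then clear, since each $\beta_i$ is bounded above by $c_{r_1}+\sum_{\ell=2}^{i}(d_{r_\ell}+k)$.
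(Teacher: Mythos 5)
Your decomposition argument is, in substance, the paper's own proof. The paper proceeds by induction on the number of parts $N$: it applies the induction hypothesis to the first $N-1$ parts, defines $\beta_N$ as the unique integer congruent to $r$ modulo $k$ in the window $[\beta_{N-1}+d_r,\ \beta_{N-1}+d_r+k)$, and checks $q_N\ge q_{N-1}$ via $\beta_N+q_Nk=\pi_N\ge d_r+\beta_{N-1}+q_{N-1}k>\beta_N-k+q_{N-1}k$ --- which is exactly your minimality inequality $\beta_i-\beta_{i-1}\le b_i-b_{i-1}$ in different notation. You are more explicit than the paper about uniqueness (the paper buries it in the word ``unique'' when defining $\beta_N$), and you are the only one of the two to address the converse inclusion at all.

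The step you isolate as the crux --- the floor bounds $\beta_i\ge c_{r_i}$, equivalently $\mathcal{B}\subseteq\mathcal{P}$ and the membership in $\mathcal{P}$ of every sum \eqref{1.5} --- is a genuine gap, but not one you could have closed: it fails under the stated hypotheses. Take $k=2$, $c_1=1$, $c_2=100$, $d_1=d_2=0$. Then $1+2$ satisfies the defining conditions of $\mathcal{B}$ yet is not in $\mathcal{P}$ (its even part is below $c_2$), so the partition $(1+0)+(2+0)$ of the form \eqref{1.5} is not in $\mathcal{P}$; worse, $1+102\in\mathcal{P}$ and its only candidate decomposition is over this illegitimate basis element, so $\mathcal{P}$ is not an SIP class with the stated basis. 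The paper's proof is silent on this point: it asserts ``Clearly $\beta_1+\cdots+\beta_N$ is in $\mathcal{B}$,'' which is true for the gap-defined $\mathcal{B}$, but it never verifies $\mathcal{B}\subseteq\mathcal{P}$ nor that all partitions of the form \eqref{1.5} lie in $\mathcal{P}$, both of which the definition of an SIP class demands. So your proposal, far from lacking an idea the paper supplies, makes visible an implicit hypothesis the theorem needs, for instance $c_r\le c_s+d_r$ for all $1\le r,s\le k$: with it your induction closes immediately ($\beta_i\ge\beta_{i-1}+d_{r_i}\ge c_{r_{i-1}}+d_{r_i}\ge c_{r_i}$), and it holds in every application in the paper (G\"ollnitz--Gordon: $c=(1,2)$, $d=(2,3)$; Schur: $c=(1,2,3)$, $d=(3,3,4)$; Glasgow: $c=(3,2)$, $d=(3,0)$), so all downstream results are unaffected.
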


\begin{proof} We proceed by induction on the number of parts $N$ in the partition of $\mathcal{B}$. 

Clearly, if $N=1$, then we see that the single part partitions in $\mathcal{B}$ are $\{c_1,c_2,\ldots,c_k\}$. Furthermore if $b_1$ is a one part partition in $\mathcal{P}$ with $b_1\equiv c_r$ (mod $k$), then
$$
b_1=kq+c_r
$$
with $q\geq 0$.

Now suppose that our theorem holds for all partitions with fewer than $N$ parts. Let us consider an arbitrary partition $\pi$ in $\mathcal{P}$ with $N$ parts
$$
\pi_1+\pi_2+\cdots+\pi_N.
$$
From the definition of $\mathcal{P}$, we see that 
$$
\pi_1+\pi_2+\cdots+\pi_{N-1}=(\beta_1+q_1k)+(\beta_2+q_2k)+\cdots+(\beta_{N-1}+q_{N-1}k)
$$
where $\beta_1+\beta_2+\cdots+\beta_{N-1}$ is in $\mathcal{B}$ and $(q_1k)+(q_2k)+\cdots+(q_{N-1}k)$ is a partition whose parts are multiples of $k$ and $0\leq q_1\leq q_2\leq \ldots \leq q_{N-1}$. 

Now we know from the definition of $\mathcal{P}$ that if $\pi_N\equiv c_r \equiv r$ (mod $k$), then
\begin{align}
\notag \pi_N-\pi_{N-1} & = \pi_N-(\beta_{N-1}+q_{N-1}k)\\
\label{2.1} & \geq d_r.
\end{align}
Now define $\beta_N$ to be the unique integer congruent to $r$ modulo $k$ in the interval
$$
\beta_{N-1}+d_r\leq \beta_N \leq \beta_{N-1}+d_r+k.
$$
Clearly $\beta_1+\cdots+\beta_N$ is in $\mathcal{B}$. It remains to show that $q_N\geq 0$ exists so that
\begin{equation}
\label{2.2} \pi_N=\beta_N+q_Nk.
\end{equation}
Since $\pi_N\equiv r\equiv \beta_N$ (mod $k$), we need only show that $q_N$ in \eqref{2.2} is $\geq q_{N-1}$. 

Now
\begin{align*}
\beta_N+q_Nk & = \pi_N\\
& \geq d_r+\beta_{N-1}+q_{N-1}k\\
& > \beta_N-k+q_{N-1}k,
\end{align*}
thus
$$
q_Nk>(q_{N-1}-1)k,
$$
or
$$
q_N>q_{N-1}-1,
$$
i.e.
$$
q_N\geq q_{N-1}.
$$
Thus we have completed the induction step and the theorem follows.
\end{proof}

\begin{corollary}\label{corollary2} As before, $b_\mathcal{B}(n)$ denotes the generating function for partitions in $\mathcal{B}$ with exactly $n$ parts, and $P_\mathcal{P}(q)$ denotes the generating function for the partition in $\mathcal{P}$, where $\mathcal{P}$ is an SIP class of modulus $k$. Then
$$
P_\mathcal{P}(q)=\sum_{n\geq 0} \frac{b_\mathcal{B}(n)}{(q^k;q^k)_n}.
$$
\end{corollary}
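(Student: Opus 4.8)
The plan is to stratify $\mathcal{P}$ by the number of parts and exploit the unique decomposition \eqref{1.5} that is guaranteed for every SIP class (and established for the families of Theorem~\ref{theorem1}). For a fixed $n$, I would write $P^{(n)}(q)$ for the generating function of those partitions in $\mathcal{P}$ having exactly $n$ parts, so that $P_\mathcal{P}(q)=\sum_{n\ge 0}P^{(n)}(q)$; this rearrangement is legitimate as a formal power series since a partition with $n$ parts has size at least $n$, so each coefficient of $q^N$ receives only finitely many contributions.

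The central step is to observe that the assignment
$$
(b_1,\ldots,b_n;\pi_1,\ldots,\pi_n)\ \longmapsto\ (b_1+\pi_1)+(b_2+\pi_2)+\cdots+(b_n+\pi_n)
$$
is a weight-preserving bijection between, on the one hand, pairs consisting of a basis partition $b_1+\cdots+b_n\in\mathcal{B}$ with $n$ parts together with a partition $0\le\pi_1\le\cdots\le\pi_n$ into $n$ nonnegative multiples of $k$, and, on the other hand, the partitions of $\mathcal{P}$ with exactly $n$ parts. Surjectivity and injectivity of this map are precisely the existence and uniqueness assertions of \eqref{1.5} in the definition of an SIP class. Because the size of the image is $\sum_i b_i+\sum_i\pi_i$, the weight splits additively across the two coordinates, and hence the generating function factors as $P^{(n)}(q)=b_\mathcal{B}(n)\cdot G_n(q)$, where $G_n(q)$ enumerates the $\pi$-partitions by total size.

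The remaining step is to identify $G_n(q)=1/(q^k;q^k)_n$. Writing $\pi_i=k\mu_i$ converts the condition ``$0\le\pi_1\le\cdots\le\pi_n$ with every part divisible by $k$'' into ``$0\le\mu_1\le\cdots\le\mu_n$,'' i.e.\ an arbitrary partition into at most $n$ parts, with $q^{\pi_1+\cdots+\pi_n}=(q^k)^{\mu_1+\cdots+\mu_n}$. Since the generating function for partitions into at most $n$ parts is the classical $1/(q;q)_n$ (obtained, as in the treatment of \eqref{1.1} above, by conjugating to partitions with parts $\le n$, whose generating function is $\prod_{i=1}^n (1-q^i)^{-1}$), replacing $q$ by $q^k$ yields $G_n(q)=1/(q^k;q^k)_n$. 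Summing $P^{(n)}(q)=b_\mathcal{B}(n)/(q^k;q^k)_n$ over $n\ge 0$ then gives the claimed identity, the $n=0$ term being the empty partition and contributing $1$.

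I expect no serious obstacle here; the substance is bookkeeping. The one point deserving care is confirming that the decomposition \eqref{1.5} is genuinely available for the class $\mathcal{P}$ in question, but this is exactly what the definition of an SIP class asserts and what Theorem~\ref{theorem1} establishes for its families, so the only nontrivial analytic ingredient is the standard evaluation $G_n(q)=1/(q^k;q^k)_n$.
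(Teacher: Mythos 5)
Your proof is correct and takes essentially the same approach as the paper: the paper's proof simply asserts that $b_\mathcal{B}(n)/(q^k;q^k)_n$ generates the partitions in $\mathcal{P}$ with exactly $n$ parts and sums over $n$, and your bijection-plus-factorization argument (with the substitution $\pi_i = k\mu_i$ identifying the multiples-of-$k$ factor as $1/(q^k;q^k)_n$) is precisely the bookkeeping behind that assertion.
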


\begin{proof} It is clear from the theorem that 
$$
\frac{b_\mathcal{B}(n)}{(q^k;q^k)_n}
$$
is the generating function for all partitions in $\mathcal{P}$ with exactly $n$ parts. Summing over all $n$ proves the corollary.
\end{proof}

\section{Gollnitz-Gordon}\label{section3}

Identity \eqref{1.4} is the perfect prototype to reveal how SIPs truly generalize the classical series that appear in \eqref{1.1}-\eqref{1.3}. 

First let us give the well-known partition-theoretic interpretation of \eqref{1.3} \cite{16}, \cite{17}, \cite{19}, \cite{20}:

\begin{1GGtheorem} The number of partitions of $n$ in which the difference between parts is at least 2 and at least 4 between even parts equals the number of partitions of $n$ into parts congruent to 1, 4 or 7 modulo 8.
\end{1GGtheorem}

Let $\mathcal{P}_\mathcal{G}$ denote the set of all partitions in which the difference between parts is at least 2 and at least 4 between multiples of 2.

\begin{lemma} $\mathcal{P}_\mathcal{G}$ is an SIP class of modulus 2.\end{lemma}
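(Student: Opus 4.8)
The plan is to realize $\mathcal{P}_\mathcal{G}$ as one of the classes produced by Theorem~\ref{theorem1} with modulus $k=2$, so that membership in the SIP framework, together with the explicit description of the basis $\mathcal{B}$, is inherited for free. Concretely, I would invoke Theorem~\ref{theorem1} with the parameters $c_1=1$, $c_2=2$ (the minimal admissible odd and even parts) and the gap thresholds $d_1=2$, $d_2=3$, where the index $r=2$ refers to the residue class $\equiv 0\pmod 2$; note $c_1=1\equiv 1$ and $c_2=2\equiv 0\pmod 2$ satisfy the hypothesis $c_r\equiv r\pmod k$. The only thing left to check is then purely definitional: that the class $\mathcal{P}$ built from these data by Theorem~\ref{theorem1} coincides exactly with $\mathcal{P}_\mathcal{G}$.

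The key step, and the only place where anything must actually be argued, is the translation between the parity-indexed conditions of Theorem~\ref{theorem1} and the stated conditions defining $\mathcal{P}_\mathcal{G}$. Recall that in Theorem~\ref{theorem1} the gap constraint $b_i-b_{i-1}\geq d_r$ is tied to the residue of the \emph{larger} part $b_i$. First I would dispose of the minimal-part conditions: every positive odd integer is $\geq 1=c_1$ and every positive even integer is $\geq 2=c_2$, so these impose nothing beyond positivity. Then I would run through the four parity patterns of a consecutive pair $(b_{i-1},b_i)$, using the elementary fact that two integers of the same parity differ by an even number while two integers of opposite parity differ by an odd number.

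The heart of the verification is that $d_2=3$ secretly encodes the ``difference $\geq 4$ between even parts'' condition. When $b_i$ is even we get $b_i-b_{i-1}\geq 3$; if moreover $b_{i-1}$ is even the difference is even, hence at least $4$, while if $b_{i-1}$ is odd the difference is odd and the bound $\geq 3$ is exactly what ``difference $\geq 2$'' forces for an opposite-parity pair anyway. Symmetrically, $d_1=2$ with $b_i$ odd gives $\geq 2$ for an odd--odd pair and $\geq 3$ (forced by parity) for an even--odd pair. Assembling the four cases, the Theorem~\ref{theorem1} conditions with $(c_1,c_2,d_1,d_2)=(1,2,2,3)$ are seen, pair by pair, to be equivalent to the requirement that consecutive parts differ by at least $2$ with consecutive even parts differing by at least $4$ --- that is, to membership in $\mathcal{P}_\mathcal{G}$.

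I expect the main (indeed the only) obstacle to be getting this parity bookkeeping exactly right: one must verify that no admissible gap is lost or spuriously introduced in passing between the two descriptions, in particular that the opposite-parity pairs --- which can never realize a gap of exactly $2$ or exactly $4$ --- are handled consistently, so that $d_1=2$ and $d_2=3$ reproduce precisely the G\"ollnitz-Gordon gap conditions and nothing more. Once the equality $\mathcal{P}=\mathcal{P}_\mathcal{G}$ is established, Theorem~\ref{theorem1} immediately yields that $\mathcal{P}_\mathcal{G}$ is an SIP class of modulus $2$, completing the proof.
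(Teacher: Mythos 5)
Your proposal is correct and matches the paper's proof exactly: the paper also obtains the lemma as the instance of Theorem~\ref{theorem1} with $k=2$, $c_1=1$, $c_2=2$, $d_1=2$, $d_2=3$. The parity bookkeeping you spell out (that $d_2=3$ forces even--even gaps to be $\geq 4$, etc.) is left implicit in the paper, so your write-up simply makes explicit the verification the paper takes for granted.
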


\begin{proof} $\mathcal{P}_\mathcal{G}$ is an instance of Theorem~\ref{theorem1} with $c_1=1$, $c_2=2$, $d_1=2$, $d_2=3$.
\end{proof}

\begin{lemma} Let $b_\mathcal{G}(n,h)$ be the generating function for the partitions in $\mathcal{B}_\mathcal{G}$ with $n$ parts and largest part equal to $n$. Then
\begin{equation}
\label{3.1} b_\mathcal{G}(1,h)=\begin{cases}
q & \text{if } h=1\\
q^2 & \text{if } h=2\\
0 & \text{otherwise}
\end{cases},
\end{equation}
and for $n>1$, $h>0$,
\begin{equation}
\label{3.2} b_\mathcal{G}(n,2h)=qb_\mathcal{G}(n,2h-1),
\end{equation}
and
\begin{equation}
\label{3.3} b_\mathcal{G}(n,2n+2h-1)=q^{n^2+h^2+2h}\genfrac[]{0pt}{0}{n-1}{h}_2,
\end{equation}
where
\begin{equation}
\label{3.4} \genfrac[]{0pt}{0}{A}{B}_n=\frac{(q^n;q^n)_A}{(q^n;q^n)_B(q^n;q^n)_{A-B}},
\end{equation}
and
\begin{equation}
\label{3.5} (A;q)_n=\prod_{j=0}^{n-1} (1-Aq^j).
\end{equation}
\end{lemma}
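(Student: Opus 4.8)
The plan is to first use Theorem~\ref{theorem1} to make the structure of $\mathcal{B}_\mathcal{G}$ completely explicit, and then to turn that structure into recurrences for $b_\mathcal{G}(n,h)$. With $c_1=1$, $c_2=2$, $d_1=2$, $d_2=3$, the theorem says that $\beta_1+\cdots+\beta_n$ lies in $\mathcal{B}_\mathcal{G}$ exactly when $\beta_1\in\{1,2\}$ and each gap satisfies $\beta_i-\beta_{i-1}\in\{2,3\}$ when $\beta_i$ is odd and $\beta_i-\beta_{i-1}\in\{3,4\}$ when $\beta_i$ is even. Examining the four parity transitions shows the admissible gap is in fact forced by the parities of $\beta_{i-1},\beta_i$ (odd$\to$odd gives $2$; odd$\to$even and even$\to$odd give $3$; even$\to$even gives $4$). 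The base case \eqref{3.1} is then immediate, since the one-part members of $\mathcal{B}_\mathcal{G}$ are exactly $\beta_1=1$ and $\beta_1=2$.

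Relation \eqref{3.2} I would prove by a weight-shifting bijection on the largest part. If $\beta_n=2h$ is even, admissibility forces $\beta_{n-1}\in\{2h-4,2h-3\}$; if instead $\beta_n=2h-1$ is odd, admissibility forces $\beta_{n-1}\in\{2h-4,2h-3\}$ as well. Since the constraints on $\beta_1,\ldots,\beta_{n-1}$ do not involve $\beta_n$, the map replacing the largest part $2h$ by $2h-1$ and fixing the rest is a bijection between the partitions counted by $b_\mathcal{G}(n,2h)$ and those counted by $b_\mathcal{G}(n,2h-1)$; it lowers the total being partitioned by $1$, which is exactly the factor $q$ in \eqref{3.2}.

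For \eqref{3.3} I would induct on $n$ via a recurrence obtained by deleting the largest part. Writing $f(n,h)=b_\mathcal{G}(n,2n+2h-1)$, admissibility of the odd largest part $\beta_n=2n+2h-1$ forces $\beta_{n-1}\in\{2n+2h-3,\,2n+2h-4\}$, so the truncation $\beta_1+\cdots+\beta_{n-1}$ is counted either by $b_\mathcal{G}(n-1,2(n-1)+2h-1)=f(n-1,h)$ or by $b_\mathcal{G}(n-1,2(n-1)+2(h-1))$. Applying \eqref{3.2} to the even case rewrites the latter as $q\,f(n-1,h-1)$, and restoring the weight $q^{\beta_n}=q^{2n+2h-1}$ of the removed part gives $f(n,h)=q^{2n+2h-1}f(n-1,h)+q^{2n+2h}f(n-1,h-1)$. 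Substituting the proposed closed form and cancelling the factor $q^{n^2+h^2+2h}$ reduces the claim to the Gaussian-binomial Pascal relation $\genfrac[]{0pt}{0}{n-1}{h}_2=q^{2h}\genfrac[]{0pt}{0}{n-2}{h}_2+\genfrac[]{0pt}{0}{n-2}{h-1}_2$, which is the standard recurrence with $q$ replaced by $q^2$.

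The only real obstacle is the exponent bookkeeping together with the boundary cases. I would verify that the base of the induction ($h=0$, where the unique partition $1+3+\cdots+(2n-1)$ contributes $q^{n^2}$) matches $q^{n^2+h^2+2h}\genfrac[]{0pt}{0}{n-1}{h}_2$, and that the degenerate ranges $h=0$ and $h=n-1$, where one of the two Gaussian binomials vanishes, are consistent with the two-term recurrence; after that the induction closes routinely. A useful check on the whole computation is the direct bijection between $n$-part members of $\mathcal{B}_\mathcal{G}$ and their parity sequences $(p_1,\ldots,p_n)$: the odd-largest-part condition fixes $p_n=1$ and pins down the number of odd parts, and the remaining freedom is precisely a partition inside an $h\times(n-1-h)$ box, reproducing $\genfrac[]{0pt}{0}{n-1}{h}_2$ as a series in $q^2$.
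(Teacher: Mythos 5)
Your proposal is correct and takes essentially the same route as the paper: both extract the gap conditions from Theorem~\ref{theorem1}, derive the delete-the-largest-part recurrence (the paper's \eqref{3.6}), obtain \eqref{3.2} by comparing the odd and even cases (your bijective phrasing is a cosmetic variant), and then prove \eqref{3.3} by induction on $n$ using exactly the $q$-binomial recurrence \eqref{3.8a} with base $q^2$. The exponent bookkeeping you outline matches the paper's computation \eqref{3.8}, so no further changes are needed.
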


\begin{proof} We refer to Theorem~\ref{theorem1} to determine recurrences for $b_\mathcal{G}(n,h)$. Clearly \eqref{3.1} is immediate. 

By the conditions requiring closeness of parts as stated in Theorem~\ref{theorem1}, we see that for $n>1$,
\begin{equation}
\label{3.6} b_\mathcal{G}(n,h)=\begin{cases}
q^h(b_\mathcal{G}(n-1,h-2)+b_\mathcal{G}(n-1,h-3)) & \text{if }h\text{ is odd}\\
q^h(b_\mathcal{G}(n-1,h-3)+b_\mathcal{G}(n-1,h-4)) & \text{if }n\text{ is even.}
\end{cases}
\end{equation}

This clearly implies 
\begin{equation}
\label{3.7} b_\mathcal{G}(n,2h)=qb_\mathcal{G}(n,2h-1),
\end{equation}
which establishes \eqref{3.2}.

As for \eqref{3.3}, we see by \eqref{3.6} that
\begin{align}
\label{3.8} & b_\mathcal{G}(n,2n+2h-1)\\
\notag= & q^{2n+2h-1}\bigl(b_\mathcal{G}(n-1,2n+2h-3)+b_\mathcal{G}(n-1,2n+2h-4)\bigr)\\
\notag= & q^{2n+2h-1}\bigl(b_\mathcal{G}(n-1,2n+2h-3)+qb_\mathcal{G}(n-1,2n+2h-5)\bigr)\quad\text{(by \eqref{3.7}).}
\end{align}
Now the standard recurrence for the $q$-binomial coefficients (defined in \eqref{3.4}) namely \cite[p.~35]{9}
\begin{equation}
\label{3.8a} \genfrac[]{0pt}{0}{A}{B}_n=\genfrac[]{0pt}{0}{A-1}{B-1}_n+q^{nB}\genfrac[]{0pt}{0}{A-1}{B}_n,
\end{equation}
establishes that the right-hand side of \eqref{3.3} also satisfies the recurrence \eqref{3.8}. In addition the right-hand side of \eqref{3.3} fulfills \eqref{3.1} when $n=1$. Thus \eqref{3.3} follows by a straight forward mathematical induction on $n$.
\end{proof}

\begin{theorem}{First Gollnitz-Gordon identity}\label{theorem5} 
\begin{align}
\label{3.9} P_\mathcal{G}(q) & = \sum_{n\geq 0} \frac{(-q;q^2)_nq^{n^2}}{(q^2;q^2)_n}\\
\label{3.10} & = \frac{1}{(q;q^8)_\infty(q^4;q^8)_\infty(q^7;q^8)_\infty}.
\end{align}
\end{theorem}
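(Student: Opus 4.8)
The plan is to derive \eqref{3.9} directly from the SIP machinery and to obtain \eqref{3.10} by quoting the classical identity \eqref{1.6}. First I would invoke Corollary~\ref{corollary2}: because $\mathcal{P}_\mathcal{G}$ has modulus $2$, we have $P_\mathcal{G}(q)=\sum_{n\geq 0}b_\mathcal{G}(n)/(q^2;q^2)_n$ with $b_\mathcal{G}(n)=\sum_{h\geq 0}b_\mathcal{G}(n,h)$. Thus everything reduces to summing the explicit values of $b_\mathcal{G}(n,h)$ supplied by the preceding Lemma over all admissible largest parts $h$ and recognizing the total as $q^{n^2}(-q;q^2)_n$.

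Next I would split the sum according to the parity of the largest part. The odd-largest-part contributions are precisely the quantities $b_\mathcal{G}(n,2n+2j-1)=q^{n^2+j^2+2j}\genfrac[]{0pt}{0}{n-1}{j}_2$ given by \eqref{3.3}, which vanish outside $0\leq j\leq n-1$. Each even largest part $2m$ satisfies $b_\mathcal{G}(n,2m)=q\,b_\mathcal{G}(n,2m-1)$ by \eqref{3.2}, and writing $2m-1=2n+2j-1$ pairs these against the odd terms, so the even contribution is exactly $q$ times the odd one. This gives
$$
b_\mathcal{G}(n)=(1+q)\,q^{n^2}\sum_{j=0}^{n-1}q^{j^2+2j}\genfrac[]{0pt}{0}{n-1}{j}_2.
$$

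Then I would evaluate the inner sum by the terminating $q$-binomial theorem in base $q^2$, namely $\sum_{j=0}^{N}\genfrac[]{0pt}{0}{N}{j}_2 q^{j(j-1)}z^j=(-z;q^2)_N$. With $N=n-1$ and $z=q^3$, the elementary identity $q^{j(j-1)}(q^3)^j=q^{j^2+2j}$ collapses the sum to $(-q^3;q^2)_{n-1}$. Since $(1+q)(-q^3;q^2)_{n-1}=(-q;q^2)_n$, this yields $b_\mathcal{G}(n)=q^{n^2}(-q;q^2)_n$; substituting into Corollary~\ref{corollary2} produces the series in \eqref{3.9} term by term. Finally, the equality of \eqref{3.9} with the product in \eqref{3.10} is the classical first G\"ollnitz--Gordon identity recorded in \eqref{1.6}, which I would cite rather than reprove.

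The hard part will be the bookkeeping of the second step: identifying exactly which largest parts $h$ make $b_\mathcal{G}(n,h)$ nonzero, correctly pairing the even values with the odd values through \eqref{3.2} so as to extract the clean factor $(1+q)$, and massaging the exponent $q^{j^2+2j}$ into the precise shape $q^{j(j-1)}z^j$ demanded by the $q$-binomial theorem so that the choice $z=q^3$ emerges and the evaluation applies.
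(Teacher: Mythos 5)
Your derivation of \eqref{3.9} is correct and is essentially the paper's own argument: invoke Corollary~\ref{corollary2}, split $b_\mathcal{G}(n)$ by parity of the largest part, use \eqref{3.2} to pair each even case against the preceding odd case (producing the factor $1+q$), insert the explicit formula \eqref{3.3}, and collapse the inner sum with the terminating $q$-binomial theorem in base $q^2$ to get $(1+q)(-q^3;q^2)_{n-1}=(-q;q^2)_n$; this matches the paper's display \eqref{3.11} and the two lines following it. Where you diverge is \eqref{3.10}: you cite the classical identity \eqref{1.6} for the series-to-product step, whereas the paper reproves it. The paper returns to the double sum in \eqref{3.11}, sums on $n$ rather than $j$, and after an application of the $q$-binomial recurrence and Euler's theorem arrives at $P_\mathcal{G}(q)=(-q;q^2)_\infty\sum_{j\geq 0}q^{2j^2}/(-q;-q)_{2j}$; it then evaluates $P_\mathcal{G}(-q^2)$ by splitting into even and odd parts and invoking the Jacobi triple product twice, finally undoing the substitution to obtain the product in \eqref{3.10}. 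Your shortcut is logically sound and not circular, since \eqref{1.6} is an established theorem of G\"ollnitz and Gordon independent of this paper; the paper itself concedes in the remark preceding its proof that the argument for \eqref{3.10} follows an existing source and is included only to show that the SIP framework naturally suggests it. So your route buys brevity at the cost of self-containedness, while the paper's buys a complete, citation-free derivation that exhibits how the double-sum representation \eqref{3.11} leads directly to the theta-function computation.
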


\begin{remark} This celebrated theorem and its history are presented in \cite{4} and \cite[sec.~7.4]{9}. Indeed the proof of \eqref{3.10} given below follow essentially that given in \cite[pp.~40-41]{6}. We include it here to reveal that it is naturally suggested by the work here.\end{remark}

\begin{proof} Let us first treat \eqref{3.9}. By Corollary~\ref{corollary2},
\begin{align}
\notag P_\mathcal{G}(q) & = \sum_{n\geq 0} \frac{\sum_{j\geq 0} b_\mathcal{G}(n,j)}{(q^2;q^2)_n}\\
\notag & = \sum_{n\geq 0} \frac{\sum_{j\geq 0} \bigl(b_\mathcal{G}(n;2j-1)+b_\mathcal{G}(n;2j)\bigr)}{(q^2;q^2)_n}\\
\label{3.11} & = 1+ \sum_{n\geq 1} \frac{\sum_{j\geq 0} (1+q)q^{n^2+j^2+2j}\genfrac[]{0pt}{0}{n-1}{j}_2}{(q^2;q^2)_n}.
\end{align}

Now sum the $j$-sum using the $q$-binomial theorem \cite[p.~36]{9}. Hence
\begin{align*}
P_\mathcal{G}(q) & = 1+\sum_{n\geq 1} \frac{q^{n^2}(1+q)(-q^3;q^2)_{n-1}}{(q^2;q^2)_n}\\
& = \sum_{n\geq 0} \frac{q^{n^2}(-q;q^2)_n}{(q^2;q^2)_n},
\end{align*}
and \eqref{3.9} is proved.

Suppose in \eqref{3.11}, we summed on $n$ rather than $j$. Thus
\begin{align*}
    P_\mathcal{G}(q) & = 1+\sum_{n\geq 1} \frac{q^{n^2}\left(\sum_{j\ge0}q^{j^2+2j}\genfrac[]{0pt}{0}{n-1}{j}_2+\sum_{j\ge1}q^{j^2-1}\genfrac[]{0pt}{0}{n-1}{j-1}_2\right)}{(q^2;q^2)_n}\\
    &=\sum_{n\ge0}\frac{q^{n^2}\sum_{j\ge0}^nq^{j^2}\genfrac[]{0pt}{0}{n}{j}_2}{(q^2;q^2)_n}~\text{by \cite[p. 36]{9}}\\
    &=\sum_{j\ge0}\frac{q^{j^2}}{(q^2;q^2)_j}\sum_{n\ge j}\frac{q^{n^2}}{(q^2;q^2)_{n-j}}\\
    &=\sum_{j\ge0}\frac{q^{j^2}}{(q^2;q^2)_j}\sum_{n\ge 0}\frac{q^{(n+j)^2}}{(q^2;q^2)_n}\\
    &=\sum_{j\ge0}\frac{q^{2j^2}}{(q^2;q^2)_j}(-q^{2j+1};q^2)_\infty~\text{by \cite[p. 36]{9}}\\
    &=(-q;q^2)_\infty\sum_{j\ge0}\frac{q^{2j^2}}{(-q;-q)_{2j}}.
\end{align*}
Therefore
\begin{align*}
    P_\mathcal{G}(-q^2)&=(q^2;q^4)_\infty\cdot\frac{1}{2}\sum_{j\ge0}\frac{q^{j^2}}{(q^2;q^2)_j}(1+(-1)^j)\\
    &=\frac{1}{2}(q^2;q^4)_\infty((-q;q^2)_\infty+(q;q^2)_\infty)\\
    &=\frac{1}{2}\frac{(q^2;q^4)_\infty}{(q^4;q^4)_\infty}\left((q^4;q^4)_\infty(-q;q^4)_\infty(-q^3;q^4)_\infty+(q^{-1};q^4)_\infty(q;q^4)_\infty(q^3;q^4)_\infty\right)\\
    &=\frac{1}{2}\frac{(q^2;q^4)_\infty}{(q^4;q^4)_\infty}\left(\sum_{n=-\infty}^\infty q^{2n^2-n}+sum_{n=-\infty}^\infty(-1)^n q^{2n^2-n}\right)~\text{by \cite[p. 22]{9}}\\
    &=\frac{(q^2;q^4)_\infty}{(q^4;q^4)_\infty}\sum_{n=-\infty}^\infty q^{8n^2-2n}\\
    &=\frac{(q^2;q^4)_\infty}{(q^4;q^4)_\infty}(q^{16};q^{16})_\infty(-q^6;q^{16})_\infty(-q^{10};q^{16})_\infty~\text{by~\cite[p. 22]{9}}
\end{align*}
Hence
\begin{align*}
    P_\mathcal{G}(-q^2)&=\frac{(-q;q^2)_\infty}{(q^2;q^2)_\infty}(q^8;q^8)_\infty(-q^3;q^8)_\infty(-q^5;q^8)_\infty\\
    &=\frac{(q^2;q^4)_\infty(q^8;q^8)_\infty(q^3;q^8)_\infty(q^5;q^8)_\infty}{(q;q)_\infty}\\
    &=\frac{1}{(q;q^8)_\infty(q^4;q^8)_\infty(q^7;q^8)_\infty}
\end{align*}
as desired.
\end{proof}
\section{Schur's 1926 Theorem}\label{section4}
Here is the theorem in question \cite{23}.
\begin{Stheorem}
The number of partitions of $n$ in which the parts are $\equiv\pm1\pmod6$ equals the number of partitions of $n$ in which the parts differ by at least 3 and at least 4 if one of the parts in question is divisible by 3.
\end{Stheorem}
In light of the fact that
\begin{align}\label{4.1}
   \prod_{n=1}^\infty(1+q^{3n-1})(1+q^{3n-2})=\prod_{n=1}^\infty\frac{(1-q^{6n-2})(1-q^{6n-4})}{(1-q^{3n-1})(1-q^{3n-2})}=\prod_{n=1}^\infty\frac{1}{(1-q^{6n-1})(1-q^{6n-5})},
\end{align}
we see that the first class of partitions in Schur's theorem may be replaced by partitions into distinct non-multiples of 3.

Indeed, this revision of Schur may be refined as follows (an idea first effectively considered in \cite{2}):

\begin{RStheorem}
The generating function for partitions in which there are $m$ parts $\equiv0,1\pmod3$ and $n$ parts $\equiv0,2\pmod3$ and the difference conditions in Schur's original theorem hold is the coefficient of $u^mv^n$ in
\[\prod_{j=1}^\infty(1+uq^{3n-2})(1+vq^{3n-1}).\]
\end{RStheorem}
Let $\PS$
~denote the class of all partitions satisfying the difference conditions in Schur's theorem.
\begin{theorem}\label{theorem6}
$\PS$ is an SIP class.
\end{theorem}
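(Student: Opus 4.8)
The plan is to realize $\PS$ as a direct instance of Theorem~\ref{theorem1}, exactly as was done for $\mathcal{P}_\mathcal{G}$ in Section~\ref{section3}, but now with modulus $k=3$. The natural parameters are $c_1=1$, $c_2=2$, $c_3=3$, the least positive integer in each residue class modulo $3$, together with $d_1=d_2=3$ and $d_3=4$. Feeding these into Theorem~\ref{theorem1} produces the class of partitions $0<b_1\leq b_2\leq\cdots\leq b_j$ in which each gap $b_i-b_{i-1}$ is at least $3$ when $b_i\equiv 1$ or $2\pmod 3$ and at least $4$ when $b_i\equiv 0\pmod 3$. The whole content of the proof is then to verify that this is the same set of partitions as $\PS$, i.e.\ that Schur's difference conditions (gap at least $3$, and at least $4$ whenever one of the two parts involved is divisible by $3$) are reproduced exactly by these parameters.

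The lower-bound hypotheses of Theorem~\ref{theorem1} are disposed of immediately: since $c_r$ is the smallest positive residue in its class, the requirement ``$b_i\geq c_r$ when $b_i\equiv r$'' says only that the parts are positive, which is all Schur's theorem asks. The substance is the matching of gap conditions, and I would settle this by running through the nine possible residue pairs $(s,r)$, where $b_{i-1}\equiv s$ and $b_i\equiv r \pmod 3$. For each pair, the effective minimum gap allowed by Theorem~\ref{theorem1} is the least integer that is both $\geq d_r$ and $\equiv r-s \pmod 3$, and I would check case by case that this equals the least integer satisfying Schur's requirement for that pair.

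The one genuinely delicate point --- and the place I expect the argument to need care --- is an apparent asymmetry. In Theorem~\ref{theorem1} the gap bound $d_r$ is governed solely by the residue of the \emph{larger} part $b_i$, whereas Schur's ``at least $4$'' clause is triggered by \emph{either} of the two parts being divisible by $3$. The worrisome configuration is $b_{i-1}\equiv 0$, $b_i\not\equiv 0\pmod 3$: here Schur demands a gap of at least $4$, yet Theorem~\ref{theorem1} only imposes $b_i-b_{i-1}\geq d_r=3$. The resolution is arithmetic rather than combinatorial: when exactly one of two parts is divisible by $3$, their difference is not divisible by $3$, so the constraint $b_i-b_{i-1}\geq 3$ already forces $b_i-b_{i-1}\geq 4$ on its own. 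Consequently the weaker-looking bound $d_r=3$ coincides with Schur's stronger requirement in precisely these cases, and more generally the ``one of the parts'' and ``both of the parts'' readings of Schur's condition define the same class modulo $3$. Once this coincidence is noted, all nine cases check out and $\PS$ is exhibited as the SIP class of modulus $3$ furnished by Theorem~\ref{theorem1}.
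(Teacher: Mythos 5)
Your proposal is correct and is essentially the paper's own proof: the paper likewise cites Theorem~\ref{theorem1} with $k=3$, $\{c_1,c_2,c_3\}=\{1,2,3\}$, $\{d_1,d_2,d_3\}=\{3,3,4\}$, stating the identification without further verification. Your additional case analysis --- in particular the observation that when the smaller part is divisible by $3$ and the larger is not, the gap cannot equal $3$ for arithmetic reasons, so the bound $d_r=3$ silently enforces Schur's required gap of $4$ --- is exactly the check the paper leaves implicit, and it is carried out correctly.
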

\begin{proof}
$\PS$ is the instance of Theorem \ref{theorem1} with $k=3,\{c_1,c_2,c_3\}=\{1,2,3\}$, and $\{d_1,d_2,d_3\}=\{3,3,4\}$.
\end{proof}
In the remainder of this section, we shall first determine an explicit formula for the generating functions associated with $\BS$. Then we will apply Corollary \ref{corollary2} to prove the Refinement of Schur's Theorem.
\begin{theorem}\label{theorem7}
Let $b_\mathcal{S}(u,v,n,h)~(=b_\mathcal{S}(n,h))$ be the generating function for the partitions in $\BS$ (with $u$ marking parts $\equiv0,1\pmod3$ and $v$ marking parts $\equiv0,2\pmod3$). Then for $n>1$ and $h\ge0$,
\begin{align}
    \label{4.2}b_\mathcal{S}(n,3n+3h-1)&=\sum_{j\ge0}^{n-h}\sum_{i=0}^h v^{n-j}u^{j+h-i}q^{n(3n+1)/2+h(3h+5)/2+i(3i+1)/2}\\
    \notag&\times q^{-j}\genfrac[]{0pt}{0}{n-j-1}{h}_3\genfrac[]{0pt}{0}{j+h-i}{h}_3\genfrac[]{0pt}{0}{h}{i}_3,\\
    \label{4.3}b_\mathcal{S}(n,3n+3h-2)&=(1+uq)\sum_{j\ge0}^{n-h}\sum_{i=0}^h v^{n-j}u^{j+h-i}q^{n(3n+1)/2+h(3h+5)/2+i(3i-5)/2}\\
    \notag&\times q^{-j}\genfrac[]{0pt}{0}{n-j-1}{h}_3\genfrac[]{0pt}{0}{j+h-i}{h}_3\genfrac[]{0pt}{0}{h-1}{i-1}_3,\\
    \label{4.4}b_\mathcal{S}(n,3n+3h)&=uqb_\mathcal{S}(n,3n-3h-1),
\end{align}
and for $n=1$,
\begin{align}
    \label{4.5}b_\mathcal{S}(1,h)&=\left\{\begin{array}{cc}
        uq&\text{if}~h=1\\
        vq^2&\text{if}~h=2\\
        uvq^3&\text{if}~h=3\\
        0&\text{otherwise.}
    \end{array}\right.
\end{align}
where
\[\genfrac[]{0pt}{0}{A}{B}_j=\left\{\begin{array}{cc}
        0&\text{if}~B>A~\text{or}~B<0\\
        (q^j;q^j)_A//
        (q^j;q^j)_B(q^j;q^j)_{A-B}&\text{otherwise.}
    \end{array}\right.\]
\end{theorem}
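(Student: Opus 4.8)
The plan is to mirror the template already used for the Göllnitz--Gordon lemma: record the base case, extract the residue-driven recurrences from Theorem~\ref{theorem1}, and then verify that the closed forms \eqref{4.2}--\eqref{4.4} satisfy those recurrences by induction on $n$, the only external tool being the Gaussian-binomial Pascal recurrence \eqref{3.8a} in base $q^3$. The base case \eqref{4.5} is immediate, since the one-part members of $\BS$ are exactly $c_1=1,\,c_2=2,\,c_3=3$, carrying weights $uq,\,vq^2,\,uvq^3$ according as the part is $\equiv1,2,0\pmod3$. For $n>1$, Theorem~\ref{theorem1} forces the second-largest part to lie in $[\,d_r,d_r+3)$ below the largest part $h$, where $r$ is the residue of $h$; attaching the weight $uq^h$, $vq^h$, or $uvq^h$ determined by $h\bmod3$ gives three recurrences, e.g. for $h\equiv2\pmod3$,
\[
b_{\mathcal{S}}(n,h)=vq^{h}\bigl(b_{\mathcal{S}}(n-1,h-3)+b_{\mathcal{S}}(n-1,h-4)+b_{\mathcal{S}}(n-1,h-5)\bigr),
\]
and likewise with prefactor $uq^{h}$ and differences $3,4,5$ for $h\equiv1$, and prefactor $uvq^{h}$ with differences $4,5,6$ for $h\equiv0$.

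Two structural observations then dispose of \eqref{4.4} and explain the shape of \eqref{4.3}. The $h\equiv0$ recurrence for $3n+3h$ and the $h\equiv2$ recurrence for $3n+3h-1$ admit the \emph{same} three second-largest parts $3n+3h-4,\,3n+3h-5,\,3n+3h-6$, so the two generating functions differ only by the weight ratio $uvq^{\,3n+3h}/(vq^{\,3n+3h-1})=uq$, which is exactly \eqref{4.4}. Moreover, in the $h\equiv1$ recurrence the multiple-of-$3$ term $b_{\mathcal{S}}(n-1,3(n-1)+3(h-1))$ collapses by \eqref{4.4} to $uq\,b_{\mathcal{S}}(n-1,3(n-1)+3(h-1)-1)$ and merges with the neighboring $\equiv1$ contribution to produce the common factor $(1+uq)$ appearing in \eqref{4.3}; after dividing it out, verifying \eqref{4.3} is structurally identical to verifying \eqref{4.2}.

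The heart of the argument is the verification of \eqref{4.2}. Substituting the induction hypotheses into the $\equiv2$ recurrence (having first reduced the lone multiple-of-$3$ term via \eqref{4.4}) writes $b_{\mathcal{S}}(n,3n+3h-1)$ as $vq^{\,3n+3h-1}$ times the sum of three double sums: \eqref{4.2} at $(n-1,h)$, \eqref{4.3} at $(n-1,h)$, and \eqref{4.2} at $(n-1,h-1)$. One then shifts the indices $i,j$ so that all three carry a common triple product of Gaussians, and reconciles them with the target using \eqref{3.8a}. The decisive merges are \eqref{3.8a} applied to the ``$n$-index'' factor, namely $\genfrac[]{0pt}{0}{n-j-1}{h}_3=\genfrac[]{0pt}{0}{n-j-2}{h-1}_3+q^{3h}\genfrac[]{0pt}{0}{n-j-2}{h}_3$, which recombines the $(n-1,h)$ and $(n-1,h-1)$ contributions, together with \eqref{3.8a} on the factor $\genfrac[]{0pt}{0}{h}{i}_3=\genfrac[]{0pt}{0}{h-1}{i-1}_3+q^{3i}\genfrac[]{0pt}{0}{h-1}{i}_3$, which absorbs the $\genfrac[]{0pt}{0}{h-1}{i-1}_3$ coming from \eqref{4.3}; the middle factor $\genfrac[]{0pt}{0}{j+h-i}{h}_3$ is handled by a companion shift. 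A final check that \eqref{4.2}--\eqref{4.3} reduce to \eqref{4.5} at $n=1$ seeds the induction, and it then closes on $n$.

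I expect the main obstacle to be bookkeeping rather than ideas. The difficulty is to match, across the three shifted terms, the $u$- and $v$-powers simultaneously with the three quadratic $q$-exponents $n(3n+1)/2$, $h(3h+5)/2$, and the $i$-exponent (which is $i(3i+1)/2$ in \eqref{4.2} but $i(3i-5)/2$ in \eqref{4.3}), so that the prefactor $vq^{\,3n+3h-1}$ (respectively $uq^{\,3n+3h-2}$) converts the $(n-1)$-level exponents precisely into the $n$-level exponents \emph{before} \eqref{3.8a} is invoked. Getting the shifts in $i$ and $j$ to place all three double sums over a common summation range is where sign and range errors are easiest to commit; once that alignment is correct, the two instances of the Pascal recurrence do the rest, exactly as in the straightforward induction used for \eqref{3.3}.
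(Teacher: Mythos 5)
Your proposal is correct and follows essentially the same route as the paper: the paper packages your three residue-class recurrences into the single recurrence \eqref{4.9} (via $\chi_3(h)$ and the weight $s(h)$), obtains \eqref{4.4} exactly as you do by noting that the $\equiv0$ and $\equiv2\pmod 3$ recurrences admit the same three second-largest parts, and then proves \eqref{4.2} and \eqref{4.3} coefficient-wise by induction on $n$ using the two $q$-binomial Pascal recurrences \eqref{4.14}--\eqref{4.15}, just as you propose with \eqref{3.8a}. One cosmetic correction: in your discussion of \eqref{4.3}, the collapsed multiple-of-3 term $uq\,b_{\mathcal{S}}(n-1,3(n-1)+3(h-1)-1)$ merges with the neighboring $\equiv2\pmod3$ contribution (the other instance of \eqref{4.2}), not with the $\equiv1\pmod3$ contribution, which already carries its own factor $(1+uq)$ from the induction hypothesis; this pairing is precisely how the common factor $u(1+uq)$ that the paper cancels from both sides of \eqref{4.16} arises.
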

\begin{proof}
We let
\begin{align}
    \label{4.6}\chi_3(n)=\left\{\begin{array}{cc}
        1&\text{if}~3\mid n\\
        0&\text{otherwise,}
    \end{array}\right.
\end{align}
and
\begin{align}
    \label{4.7}s(h)=\left\{\begin{array}{cc}
        uv&\text{if}~3\mid h\\
        u&\text{if}~h\equiv1\pmod3\\
        v&\text{if}~h\equiv2\pmod3.
    \end{array}\right.
\end{align}
By the definition of $\BS$, we see that
\begin{align}
    \label{4.8}b_\mathcal{S}(1,h)&=\left\{\begin{array}{cc}
        uq&\text{if}~h=1\\
        vq^2&\text{if}~h=2\\
        uvq^3&\text{if}~h=3\\
        0&\text{otherwise.}
    \end{array}\right.
\end{align}
and for $n=1$,
\begin{align}
    \label{4.9}b_\mathcal{S}(n,h)&=s(h)q^h\sum_{j=3}^5b_{\mathcal{S}}(n-1,h-j-\chi_3(h)).
\end{align}
Now \eqref{4.4} follows directly by comparing \eqref{4.9} with $h\to3n+3h-1$, and \eqref{4.5} is a restatement of \eqref{4.8}.

To prove \eqref{4.2} and \eqref{4.3}, we need only show that the coefficient of $v^{n-j}u^{j+h-i}$ on both sides of \eqref{4.9} is identical, when the $b_\mathcal{S}(n,h)$ are replaced by the corresponding right hand sides of \eqref{4.2} and \eqref{4.3}.

We begin with \eqref{4.9} when $n\to 3n+3h-1$. To make clear what we are doing, we write the right hand side of \eqref{4.2} as
\begin{align}
    \label{4.10}\sum_{j=0}^{n-h}\sum_{i=0}^h b_1(n,h,i,j)v^{n-j}u^{j+h-i}
\end{align}
and the right hand side of \eqref{4.3} as
\begin{align}
    \label{4.11}(1+uq)\sum_{j=0}^{n-h}\sum_{i=0}^h b_2(n,h,i,j)v^{n-j}u^{j+h-i}.
\end{align}
Subtracting these expressions into the right hand side of \eqref{4.9} with $h$ replaced by $3n+3h-1$, we have
\begin{align}
    \label{4.12}vq^{3n+3h-1}&\left(\sum_{j=0}^{n-1-h}\sum_{i=0}^h b_1(n-1,h,i,j)v^{n-1-j}u^{j+h-i}\right.\\
    \notag&+(1+uq)\sum_{j=0}^{n-1-h}\sum_{i=0}^h b_2(n-1,h,i,j)v^{n-1-j}u^{j+h-i}\\
    \notag&\left.+uq\sum_{j=0}^{n-h}\sum_{i=0}^{h-1}
    \notag b_1(n-1,h,i,j)v^{n-1-j}u^{j+h-i}\right),
\end{align}
the coefficient of $v^{n-j}u^{j+h-i}$ in \eqref{4.12} is
\begin{align}\label{4.13}
q^{3n+3h+1}&\left(b_1(n-1,h,i,j)+b_2(n-1,h,i,j)\right.\\
\notag&\left.+qb_2(n-1,h,i+1,j)+qb_1(n-1,h-1,i,j)\right)
\end{align}
and this last expression simplifies to $b_1(n-1,h,i,j)$ through three applications of one or the other of the standard $q$-binomial recurrences \cite[p. 35]{9} reiterated here:
\begin{align}
\label{4.14}\genfrac[]{0pt}{0}{A}{B}_j&=\genfrac[]{0pt}{0}{A-1}{B-1}_j+q^{jB}\genfrac[]{0pt}{0}{A-1}{B}_j,
\end{align}
and
\begin{align}
\label{4.15}\genfrac[]{0pt}{0}{A}{B}_j&=\genfrac[]{0pt}{0}{A-1}{B}_j+q^{j(A-B)}\genfrac[]{0pt}{0}{A-1}{B-1}_j.
\end{align}
Thus \eqref{4.9} is established for $h\to3n+3n-1$.

Finally we consider \eqref{4.9} with $h\to3n+3h+1$ in which case the assertion becomes
\begin{align}
    \label{4.16}(1+uq)&\sum_{i=0}^{n-h-1}\sum_{j=0}^{h+1} b_2(n,h+1,i,j)v^{n-j}u^{j+h-i+1}.\\
	\notag&=uq^{3n+3h+1}\left((1+uq)\sum_{i=0}^{n-h-1}\sum_{j=0}^{h} b_2(n-1,h+1,i,j)v^{n-1-j}u^{j+h-i+1}\right.\\
	\notag&\left.+(1+uq)\sum_{i=0}^{n-h-1}\sum_{j=0}^{h} b_1(n-1,h,i,j)v^{n-1-j}u^{j+h-i}\right).
\end{align}
Cancelling the $u(1+uq)$ from both sides of \eqref{4.16}, we see that we need to establish that the coefficients of $v^{n-j}u^{j+h-i}$ are identical on each side.

On the right hand side the coefficient is
\begin{align*}
&q^{3n+3h+1}(b_2(n-1,h+1,i,j)+b_1(n-1,h,i-1,j-1))\\
&=q^{n(3n+1)/2+(h+1)(3h+8)/2+i(3i-5)/2}\\
&\times\left(q^{3h+3}\genfrac[]{0pt}{0}{n-1-j}{h}_3\genfrac[]{0pt}{0}{j+h-i}{h+1}_3\genfrac[]{0pt}{0}{h}{i-1}_3\right.\\
&\left.+\genfrac[]{0pt}{0}{n-1-j}{h}_3\genfrac[]{0pt}{0}{j+h-i}{h}_3\genfrac[]{0pt}{0}{h}{i-1}_3\right)\\
&=b_2(n,h+1,i,j)
\end{align*}
by \eqref{4.14}.

Thus we have fulfilled the defining recurrence and initial conditions to establish the formulas \eqref{4.2} and \eqref{4.3}. This concludes the proof of Theorem \ref{theorem7}.
\end{proof}
We shall conclude this section by proving the Refinement of Schur's Theorem combining Corollary \ref{corollary2} with Theorem \ref{theorem7}.

For the sake of brevity, we write
\begin{equation}
	\label{4.17}S_1(n,h)=b_{\mathcal{S}}(n,3n+3h-1)
\end{equation}
and
\begin{equation}
	\label{4.18}S_2(n,h)=b_{\mathcal{S}}(n,3n+3h-2).
\end{equation}
We note that by \eqref{4.4},
\begin{equation}
	\label{4.19}uqS_2(n,h)=b_{\mathcal{S}}(n,3n+3h).
\end{equation}
To make the final theorem of this section readable, we first prove three lemmas.
\begin{lemma}\label{lemma8}
\begin{align}
	\label{4.20}(1+uq)S_1(n,h)+S_2(n,h+1)&=\sum_{j=0}^{n-h}\sum_{i=-1}^h v^{n-j}u^{j+h-i}q^{n(3n+1)/2+h(3h+8)/2+i(3i+1)/2-j}\\
	\notag&\times\genfrac[]{0pt}{0}{n-1-j}{h}_3\genfrac[]{0pt}{0}{j+h-i}{j}_3\genfrac[]{0pt}{0}{j+1}{i+1}_3
\end{align}
\end{lemma}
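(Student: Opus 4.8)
The plan is to prove \eqref{4.20} by extracting, on each side, the coefficient of a generic monomial $v^{n-j}u^{j+h-i}$ and verifying the resulting identity for each fixed pair $(i,j)$, exactly in the spirit of the coefficient-matching argument used in the proof of Theorem~\ref{theorem7}. The advantage here is that $S_1(n,h)$ and $S_2(n,h+1)$ are already available in closed form from \eqref{4.2} and \eqref{4.3} (the latter read with $h\to h+1$), so no fresh induction on $n$ is needed; the whole assertion collapses to a single identity among products of three Gaussian polynomials.

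First I would record how the prefactors $(1+uq)$ act. Writing $c_1(i,j)$ for the coefficient of $v^{n-j}u^{j+h-i}$ in $S_1(n,h)$ (read off from \eqref{4.2}) and $c_2(i,j)$ for the coefficient of $v^{n-j}u^{j+(h+1)-i}$ in the inner double sum of $S_2(n,h+1)$ (read off from \eqref{4.3} with $h\to h+1$), multiplication by $(1+uq)$ simply shifts the $u$-degree by one and attaches a factor $q$ to the shifted term. Collecting contributions to $v^{n-j}u^{j+h-i}$ then expresses the left side of \eqref{4.20} as
\[
c_1(i,j)+q\,c_1(i+1,j)+c_2(i+1,j)+q\,c_2(i+2,j),
\]
whereas the right side contributes the single term carrying the product $\genfrac[]{0pt}{0}{n-1-j}{h}_3\genfrac[]{0pt}{0}{j+h-i}{j}_3\genfrac[]{0pt}{0}{j+1}{i+1}_3$. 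Thus \eqref{4.20} is equivalent to showing, for every admissible $(i,j)$, that this four-term combination equals that single term.

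The verification then reduces to $q$-binomial algebra. I would factor out the common monomial $v^{n-j}u^{j+h-i}$ and the common power of $q$, leaving a polynomial identity in the Gaussian coefficients. The two contributions coming from $S_2$ carry $\genfrac[]{0pt}{0}{n-1-j}{h+1}_3$ while the two coming from $S_1$ carry $\genfrac[]{0pt}{0}{n-1-j}{h}_3$, and these should be merged into the single $\genfrac[]{0pt}{0}{n-1-j}{h}_3$ on the right through repeated use of the Pascal-type recurrences \eqref{4.14} and \eqref{4.15}, as in the proof of Theorem~\ref{theorem7}. The genuinely delicate step is to reconcile the two different \emph{shapes} of the remaining two binomials: the left side produces $\genfrac[]{0pt}{0}{j+h-i}{h}_3\genfrac[]{0pt}{0}{h}{i}_3$-type products, whereas the right side displays $\genfrac[]{0pt}{0}{j+h-i}{j}_3\genfrac[]{0pt}{0}{j+1}{i+1}_3$. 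Here I would invoke the symmetry $\genfrac[]{0pt}{0}{A}{B}_3=\genfrac[]{0pt}{0}{A}{A-B}_3$ together with the subset-of-a-subset identity and one or two further applications of \eqref{4.14}/\eqref{4.15} to pass between the two forms, tracking the compensating $q$-powers each recurrence introduces.

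The main obstacle I expect is precisely this exponent-and-index bookkeeping: one must keep the quadratic $q$-exponents in $i$ and $h$ aligned through the shifts $i\mapsto i+1,i+2$ and $h\mapsto h+1$, and one must handle the boundary indices $i=-1$ and $i=h$ (and $j=0$, $j=n-h$) by invoking the convention that $\genfrac[]{0pt}{0}{A}{B}_3$ vanishes when $B<0$ or $B>A$. For instance, at $i=-1$ the terms $c_1(-1,j)$ and $c_2(0,j)$ drop out because $\genfrac[]{0pt}{0}{h}{-1}_3=0$, leaving exactly $q\,c_1(0,j)+q\,c_2(1,j)$ to reproduce the $i=-1$ summand on the right, which is why the right-hand sum must run down to $i=-1$. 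Once the generic identity and these boundary cases are confirmed, summing over $-1\le i\le h$ and $0\le j\le n-h$ yields \eqref{4.20}.
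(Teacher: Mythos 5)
Your skeleton is exactly the paper's: extract the coefficient of $v^{n-j}u^{j+h-i}$, observe that the left side of \eqref{4.20} contributes the four-term combination $c_1(i,j)+q\,c_1(i+1,j)+c_2(i+1,j)+q\,c_2(i+2,j)$, handle the boundary cases via the vanishing convention (your remark that $c_1(-1,j)$ and $c_2(0,j)$ die at $i=-1$ is correct and is exactly what happens in the paper), and finish with $q$-binomial algebra. The divergence --- and the step that would actually fail --- is your treatment of the first Gaussian factor. You read \eqref{4.3} literally, so your $c_2$-terms carry $\genfrac[]{0pt}{0}{n-1-j}{h+1}_3$, and you plan to ``merge'' these with the $\genfrac[]{0pt}{0}{n-1-j}{h}_3$ carried by the $c_1$-terms. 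No amount of \eqref{4.14}/\eqref{4.15} manipulation can accomplish this, because under that literal reading the identity \eqref{4.20} is false. Concretely, take $n=2$, $h=1$ in \eqref{4.3} as printed: for $j=1$ the factor $\genfrac[]{0pt}{0}{1-j}{1}_3$ vanishes, while for $j=0$ either $\genfrac[]{0pt}{0}{0}{i-1}_3$ (when $i=0$) or $\genfrac[]{0pt}{0}{1-i}{1}_3$ (when $i=1$) vanishes, so the formula yields $S_2(2,1)=0$. But from the definition of $\mathcal{B}_\mathcal{S}$ (the two-part basis partitions with largest part $7$ are $2+7$ and $3+7$) one has $S_2(2,1)=b_\mathcal{S}(2,7)=uvq^9(1+uq)\neq 0$, and with $S_2(2,1)=0$ the two sides of \eqref{4.20} at $n=2$, $h=0$ differ by exactly $uvq^9+u^2vq^{10}$.

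The resolution is that \eqref{4.3} as printed has a typo: its first Gaussian coefficient should be $\genfrac[]{0pt}{0}{n-j-1}{h-1}_3$. This is the form the induction in the proof of Theorem~\ref{theorem7} actually establishes (its closing display produces $\genfrac[]{0pt}{0}{n-1-j}{h}_3$ for $b_2(n,h+1,i,j)$), and it is the form the paper uses in its proof of Lemma~\ref{lemma8}. Once corrected, after the shift $h\to h+1$ all four of your terms carry the \emph{same} factor $\genfrac[]{0pt}{0}{n-1-j}{h}_3$; nothing needs merging, that factor is pulled out front, and the lemma reduces to a two-term identity in the remaining binomials. The paper finishes precisely there: two applications of \eqref{4.14} collapse the four terms to
\begin{equation*}
\genfrac[]{0pt}{0}{j+h-i+1}{h+1}_3\genfrac[]{0pt}{0}{h}{i}_3+q^{3i+3}\genfrac[]{0pt}{0}{j+h-i}{h+1}_3\genfrac[]{0pt}{0}{h}{i+1}_3,
\end{equation*}
and expanding in $(q^3;q^3)$-factorials reduces this to the elementary factorization $(1-q^{3(j+h-i+1)})(1-q^{3(i+1)})+q^{3i+3}(1-q^{3(j-i)})(1-q^{3(h-i)})=(1-q^{3(j+1)})(1-q^{3(h+1)})$, which reassembles into $\genfrac[]{0pt}{0}{j+h-i}{j}_3\genfrac[]{0pt}{0}{j+1}{i+1}_3$. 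So your plan is salvageable and then essentially coincides with the paper's, but only after the first-binomial discrepancy is recognized as a typo to be corrected, not as a feature to be engineered around with Pascal recurrences, symmetry, or subset-of-a-subset identities.
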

\begin{proof}
The coefficient of $v^{n-j}u^{j+h-i}$ in
\[(1+uq)s_1(n,h)+s_2(n,h+1)\]
is
\begin{align*}
    &q^{n(3n+1)/2+h(3h+5)/2+i(3i+1)/2-j}\\
    &\times\left(\genfrac[]{0pt}{0}{n-1-j}{h}_3\genfrac[]{0pt}{0}{j+h-i}{h}_3\genfrac[]{0pt}{0}{h}{i}_3\right.\\
    &+q^{3i+3}\genfrac[]{0pt}{0}{n-1-j}{h}_3\genfrac[]{0pt}{0}{j+h-i-1}{h}_3\genfrac[]{0pt}{0}{h}{i+1}_3\\
    &\left.+q^{3h+3}\genfrac[]{0pt}{0}{n-1-j}{h}_3\genfrac[]{0pt}{0}{j+h-i}{h+1}_3\genfrac[]{0pt}{0}{h}{i}_3\right.\\
    &\left.+q^{3i+3h+6}\genfrac[]{0pt}{0}{n-1-j}{h}_3\genfrac[]{0pt}{0}{j+h-i-1}{h+1}_3\genfrac[]{0pt}{0}{h}{i+1}_3\right)\\
    &=q^{n(3n+1)/2+h(3h+5)/2+i(3i+1)/2-j}\\
    &\times\left(\genfrac[]{0pt}{0}{n-1-j}{h}_3\genfrac[]{0pt}{0}{j+h-i+1}{h+1}_3\genfrac[]{0pt}{0}{h}{i}_3\right.\\
    &\left.+q^{3i+3}\genfrac[]{0pt}{0}{n-1-j}{h}_3\genfrac[]{0pt}{0}{j+h-i}{h+1}_3\genfrac[]{0pt}{0}{h}{i+1}_3\right)~(\text{by \eqref{4.14} twice})\\
    &=q^{n(3n+1)/2+h(3h+5)/2+i(3i+1)/2-j}\genfrac[]{0pt}{0}{n-1-j}{h}_3\\
    &\times\frac{(q^3;q^3)_{j+h-i}(q^3;q^3)_{h}}{(q^3;q^3)_{h+1}(q^3;q^3)_{j-i}(q^3;q^3)_{i+1}(q^3;q^3)_{h-i}}\\
    &\times\left((1-q^{3(j+h-i+1)})(1-q^{3(i+1)})+q^{3i+3}(1-q^{3(j-i)})(1-q^{3(h-i)})\right)\\
    &=q^{n(3n+1)/2+h(3h+5)/2+i(3i+1)/2-j}\times\genfrac[]{0pt}{0}{n-1-j}{h}_3\genfrac[]{0pt}{0}{j+h-i}{j}_3\genfrac[]{0pt}{0}{j+1}{i+1}_3,
\end{align*}
which is the coefficient asserted in \eqref{4.20}.
\end{proof}
\begin{lemma}\label{lemma9}
\begin{equation}
\label{4.21}\sum_{h=0}^r\genfrac[]{0pt}{0}{s-1}{h}_3\genfrac[]{0pt}{0}{n+1}{r-h}_3q^{3h^2+3h(n+1-r)}=\genfrac[]{0pt}{0}{n+s}{r}_3
\end{equation}
\end{lemma}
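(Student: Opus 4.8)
The plan is to recognize \eqref{4.21} as a specialization of the $q$-Vandermonde convolution and to derive it directly from the finite $q$-binomial theorem \cite[p.~36]{9}, which in the base $q^3$ relevant here reads
\[
\prod_{i=0}^{N-1}\bigl(1+xq^{3i}\bigr)=\sum_{k\ge0}q^{3\binom{k}{2}}\genfrac[]{0pt}{0}{N}{k}_3 x^k.
\]
Since $(s-1)+(n+1)=n+s$, the idea is to take $N=n+s$ and split the product on the left, with the second block of factors beginning at $i=n+1$, and then to compare the coefficient of $x^r$ on the two sides of the resulting identity.

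First I would factor
\[
\prod_{i=0}^{n+s-1}\bigl(1+xq^{3i}\bigr)=\prod_{i=0}^{n}\bigl(1+xq^{3i}\bigr)\cdot\prod_{i=n+1}^{n+s-1}\bigl(1+xq^{3i}\bigr).
\]
The first factor expands immediately as $\sum_{m}q^{3\binom{m}{2}}\genfrac[]{0pt}{0}{n+1}{m}_3 x^m$. For the second I would shift the index by writing $i=n+1+j$ with $0\le j\le s-2$, so that it becomes $\prod_{j=0}^{s-2}\bigl(1+(xq^{3(n+1)})q^{3j}\bigr)=\sum_{h}q^{3\binom{h}{2}+3(n+1)h}\genfrac[]{0pt}{0}{s-1}{h}_3 x^h$. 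Multiplying the two expansions and collecting the coefficient of $x^r$ (so $m=r-h$), and equating with the coefficient $q^{3\binom{r}{2}}\genfrac[]{0pt}{0}{n+s}{r}_3$ coming from the unfactored left-hand side, yields
\[
q^{3\binom{r}{2}}\genfrac[]{0pt}{0}{n+s}{r}_3=\sum_{h=0}^r q^{3\binom{h}{2}+3\binom{r-h}{2}+3(n+1)h}\genfrac[]{0pt}{0}{s-1}{h}_3\genfrac[]{0pt}{0}{n+1}{r-h}_3.
\]
Dividing by $q^{3\binom{r}{2}}$ and using the elementary identity $\binom{r}{2}=\binom{h}{2}+\binom{r-h}{2}+h(r-h)$ collapses the exponent to $3\bigl[(n+1)h-h(r-h)\bigr]=3h^2+3h(n+1-r)$, which is exactly the weight appearing in \eqref{4.21}; the binomials vanish automatically outside $0\le h\le r$, matching the stated range of summation.

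The step I expect to require the most care is the choice of where to split the product: starting the second block at $i=n+1$ (so that the residual power $q^{3(n+1)h}$ attaches to $\genfrac[]{0pt}{0}{s-1}{h}_3$) is what produces precisely the weight $q^{3h^2+3h(n+1-r)}$. The opposite split puts the extra power on the $\genfrac[]{0pt}{0}{n+1}{r-h}_3$ factor and delivers the companion form of $q$-Vandermonde, equivalent to \eqref{4.21} only after applying the symmetry $\genfrac[]{0pt}{0}{A}{B}_3=\genfrac[]{0pt}{0}{A}{A-B}_3$ and reindexing; keeping the bookkeeping of the three $\binom{\cdot}{2}$ exponents straight is the only genuinely delicate point. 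As an alternative I would note that \eqref{4.21} also follows by induction on $s$, repeatedly invoking the $q$-Pascal recurrences \eqref{4.14} and \eqref{4.15} already used above, but the generating-function route avoids tracking the weight through the recursion and is cleaner.
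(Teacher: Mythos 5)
Your proposal is correct, but it takes a genuinely different route from the paper. The paper disposes of Lemma~\ref{lemma9} in a single line: it cites the $q$-Chu--Vandermonde summation from the literature and lists the substitutions ($q\to q^3$, $n\to n+1$, $m\to s-1$, etc.) under which \eqref{4.21} becomes an instance of it. You instead recognize the identity as a $q$-Vandermonde convolution and then \emph{prove} it, expanding $\prod_{i=0}^{n+s-1}(1+xq^{3i})$ by the finite $q$-binomial theorem, splitting the product at $i=n+1$, and comparing coefficients of $x^r$. Your bookkeeping is sound: the split at $i=n+1$ attaches the weight $q^{3\binom{h}{2}+3(n+1)h}$ to the $\genfrac[]{0pt}{0}{s-1}{h}_3$ block, and the identity $\binom{r}{2}=\binom{h}{2}+\binom{r-h}{2}+h(r-h)$ collapses the exponent to $3h^2+3h(n+1-r)$, precisely the weight in \eqref{4.21}; the vanishing of the $q$-binomial coefficients outside $0\le h\le\min(r,s-1)$ and $r-h\le n+1$ takes care of the summation range. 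What your derivation buys is self-containedness and transparency: a reader can check every step without unwinding a citation's parameterization (the paper's substitution list is terse and requires the reader to have the referenced equation at hand). What the paper's approach buys is brevity and the explicit placement of the lemma within a standard family of summations. Your closing remark that an induction on $s$ via the recurrences \eqref{4.14} and \eqref{4.15} would also work is plausible but unneeded; the generating-function argument as you give it is complete.
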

\begin{proof}
This is an instance of the q-Chu-Vandermonde summation \cite[p. 37]{3}, eq. (3,3.10), $q\to q^3,n\to n+1,m\to s-1,h\to r$, and $k\to r-k$.
\end{proof}
\begin{lemma}\label{lemma10}
\begin{equation}
\label{4.22}\sum_{n\ge0}\genfrac[]{0pt}{0}{r}{n}_3\genfrac[]{0pt}{0}{n+s}{r}_3\frac{q^{3n^2+3n(s-r)}}{(q^3;q^3)_{n+s}}=\frac{1}{(q^3;q^3)_r(q^3;q^3)_s}
\end{equation}
\end{lemma}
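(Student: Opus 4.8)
The plan is to reduce \eqref{4.22} to a transparent form and then read it off as the $x^r y^s$ coefficient in a product of two classical generating functions. Throughout write $Q=q^3$, so that $\genfrac[]{0pt}{0}{A}{B}_3=(Q;Q)_A/\bigl((Q;Q)_B(Q;Q)_{A-B}\bigr)$ and $q^{3N}=Q^N$. First I would expand the two $q$-binomial coefficients on the left of \eqref{4.22} and cancel the common factors $(Q;Q)_r$ and $(Q;Q)_{n+s}$; the whole left-hand side collapses to
\[
\sum_{n\ge0}\frac{Q^{\,n^2+n(s-r)}}{(Q;Q)_n\,(Q;Q)_{r-n}\,(Q;Q)_{n+s-r}},
\]
so that \eqref{4.22} is equivalent to the assertion that this sum equals $1/\bigl((Q;Q)_r(Q;Q)_s\bigr)$. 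The summand vanishes unless $\max(0,r-s)\le n\le r$, so the sum is finite.

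Next I would prove the reduced identity for all $r,s\ge0$ at once by forming the two-variable generating function $\sum_{r,s\ge0}(\cdots)x^r y^s$ and matching it against the generating function $\sum_{r,s}x^r y^s/\bigl((Q;Q)_r(Q;Q)_s\bigr)=1/\bigl((x;Q)_\infty(y;Q)_\infty\bigr)$ of the right-hand side (Euler's identity, \cite[p.~36]{9}). The crucial move is the change of summation variables $a=r-n$, $m=n+s-r$: under it $r=n+a$ and $s=a+m$, the exponent simplifies via $n^2+n(s-r)=nm$, and the triple sum factors completely as
\[
\Bigl(\sum_{a\ge0}\frac{(xy)^a}{(Q;Q)_a}\Bigr)\Bigl(\sum_{n,m\ge0}\frac{Q^{\,nm}\,x^n y^m}{(Q;Q)_n (Q;Q)_m}\Bigr).
\]

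The first factor is $1/(xy;Q)_\infty$ by Euler's identity. For the second factor I would sum first on $m$ (again Euler's identity), leaving $\sum_{n}x^n/\bigl((Q;Q)_n(Q^n y;Q)_\infty\bigr)$; rewriting $1/(Q^n y;Q)_\infty=(y;Q)_n/(y;Q)_\infty$ turns this into a Cauchy sum, which the $q$-binomial theorem \cite[p.~36]{9} evaluates as $(xy;Q)_\infty/\bigl((x;Q)_\infty(y;Q)_\infty\bigr)$. Multiplying the two factors, the $(xy;Q)_\infty$ cancels and leaves exactly $1/\bigl((x;Q)_\infty(y;Q)_\infty\bigr)$, matching the right-hand side; equating coefficients of $x^r y^s$ yields the reduced identity and hence \eqref{4.22}.

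The only genuinely delicate step is the single rearrangement $1/(Q^n y;Q)_\infty=(y;Q)_n/(y;Q)_\infty$ that converts the inner $m$-sum into the shape to which the $q$-binomial theorem applies; once that is in place the computation is forced. Everything else — the opening cancellation of Pochhammer symbols and the reindexing — is bookkeeping, and the interchange of summations is legitimate at the level of formal power series in $x,y$, since for each fixed pair $(r,s)$ only finitely many triples $(n,a,m)$ contribute. Alternatively one can give a purely finite proof by iterating the $q$-Chu--Vandermonde summation of Lemma~\ref{lemma9}, but the generating-function argument is shorter and avoids carrying the auxiliary binomials.
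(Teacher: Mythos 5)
Your proof is correct, but it takes a genuinely different route from the paper. After the same opening cancellation (the reduced form $\sum_{n}Q^{n^2+n(s-r)}/\bigl((Q;Q)_n(Q;Q)_{r-n}(Q;Q)_{n+s-r}\bigr)$ with $Q=q^3$ is implicit in both treatments), the paper recognizes the sum as a terminating basic hypergeometric series in a limiting form: it writes the left side as $\frac{1}{(q^3;q^3)_r(q^3;q^3)_{s-r}}\lim_{\tau\to0}\sum_{n=0}^r\frac{(q^{-3r};q^3)_n(q^{3s+3}/\tau;q^3)_n\tau^n}{(q^3;q^3)_n(q^{3(s-r+1)};q^3)_n}$ and evaluates it in one stroke by the $q$-Chu--Vandermonde summation \cite[p.~38, eq.~(3.3.12)]{9} with $q\to q^3$, $a\to\infty$, which collapses to $1/\bigl((q^3;q^3)_r(q^3;q^3)_s\bigr)$. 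You instead prove the identity for all $r,s$ simultaneously by a two-variable generating function: the substitution $a=r-n$, $m=n+s-r$ together with the exponent identity $n^2+n(s-r)=nm$ factors the triple sum into $\frac{1}{(xy;Q)_\infty}\cdot\frac{(xy;Q)_\infty}{(x;Q)_\infty(y;Q)_\infty}$, using only Euler's expansion and the Cauchy $q$-binomial theorem. Each approach has its merits: the paper's is a two-line derivation for a reader who has the standard summation theorems at hand, and it stays within the same toolkit (q-Chu--Vandermonde) already invoked in Lemma~\ref{lemma9}; yours is self-contained at a more elementary level, treats the cases $r\le s$ and $r>s$ uniformly (the paper's intermediate expression involves $(q^3;q^3)_{s-r}$, which requires separate interpretation when $r>s$), and explains the identity conceptually as the coefficient-wise form of the factorization $1/\bigl((x;Q)_\infty(y;Q)_\infty\bigr)$ --- at the cost of being longer and of requiring the formal-power-series interchange of summation, which you correctly justify by noting that each coefficient of $x^ry^s$ receives only finitely many contributions.
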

\begin{proof}
\begin{align*}
	\sum_{n\ge0}\genfrac[]{0pt}{0}{r}{n}_3\genfrac[]{0pt}{0}{n+s}{r}_3 \frac{q^{3n^2+3n(s-r)}}{(q^3;q^3)_{n+s}}&=\frac{1}{(q^3;q^3)_r(q^3;q^3)_{s-r}}\lim_{\tau\to0}\sum_{n=0}^r \frac{(q^{-3r};q^3)_n(q^{3s+3}/\tau;q^3)_n \tau^n} {(q^3;q^3)_n(q^{3(s-r+1)};q^3)_n}\\
	&=\frac{1}{(q^3;q^3)_r(q^3;q^3)_{s-r}}\cdot\frac{1}{(q^{3(s-r+1)};q^3)_r}\\
    &\text{(by \cite[p. 38 eq. (3.3,12)]{9})},q\to q^3,N=r,a\to\infty,b=q^{3s+3}/\tau,\\c=q^{3(s-r+1)}\\
    &=\frac{1}{(q^3;q^3)_r(q^3;q^3)_s}
\end{align*}
\end{proof}
\begin{theorem}\label{theorem11}
\begin{equation}\label{4.23}
	\sum_{n\ge0}\frac{b_{\mathcal{S}}(n;q)}{(q^3;q^3)_n}=(-uq;q^3)_\infty(-vq;q^3)_\infty.
\end{equation}
\end{theorem}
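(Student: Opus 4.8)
The plan is to feed the explicit basis generating functions of Theorem~\ref{theorem7} into Corollary~\ref{corollary2} and then collapse the resulting multiple sum using the three lemmas just established. By Corollary~\ref{corollary2} the left-hand side of \eqref{4.23} is the full generating function for the class $\PS$, so it suffices to evaluate
$$b_{\mathcal{S}}(n;q)=\sum_{h\ge0}b_{\mathcal{S}}(n,h)$$
in a form whose outer sum over $n$ telescopes into a product. First I would split the inner sum according to the residue of the largest part modulo $3$, i.e.\ group the terms into the three families $S_2(n,h)=b_{\mathcal{S}}(n,3n+3h-2)$, $S_1(n,h)=b_{\mathcal{S}}(n,3n+3h-1)$ and $b_{\mathcal{S}}(n,3n+3h)$, and rewrite the residue-$0$ family in terms of $S_1(n,h)$ via \eqref{4.4}/\eqref{4.19}. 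After shifting the summation index in the $S_2$ family this regroups $b_{\mathcal{S}}(n;q)$ as a boundary term $S_2(n,0)$ plus $\sum_{h\ge0}\bigl[(1+uq)S_1(n,h)+S_2(n,h+1)\bigr]$, which is exactly the combination evaluated in Lemma~\ref{lemma8}.

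The decisive simplification is Lemma~\ref{lemma8}: it replaces the two unwieldy double sums \eqref{4.2}--\eqref{4.3} by the single double sum over $i,j$ carrying the clean triple product $\genfrac[]{0pt}{0}{n-1-j}{h}_3\genfrac[]{0pt}{0}{j+h-i}{j}_3\genfrac[]{0pt}{0}{j+1}{i+1}_3$, with monomial weight $v^{n-j}u^{j+h-i}$ and exponent $q^{n(3n+1)/2+h(3h+8)/2+i(3i+1)/2-j}$. I would substitute this closed form into $\sum_{n\ge0}b_{\mathcal{S}}(n;q)/(q^3;q^3)_n$ and interchange the orders of summation so that the sums over $h$ and over $n$ are performed last.

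With the summand in this product form, the two summation lemmas do the rest. The $h$-summation has the shape of the $q$-Chu--Vandermonde evaluation of Lemma~\ref{lemma9}: summing over $h$ merges $\genfrac[]{0pt}{0}{n-1-j}{h}_3$ with the $h$-dependent Gaussian factor into a single coefficient, eliminating $h$. The surviving $n$-summation, now carrying the factor $1/(q^3;q^3)_n$, is precisely the form summed in Lemma~\ref{lemma10}, and applying it replaces the pair of Gaussian coefficients together with $1/(q^3;q^3)_n$ by a product of two independent reciprocal factors $1/\bigl((q^3;q^3)_r(q^3;q^3)_s\bigr)$. After these two collapses only the sums over the two ``colour'' indices $i,j$ remain, the dependence on $u$ and $v$ decouples, and each residual sum is a single application of the $q$-binomial theorem with $q\to q^3$, producing one factor $(-uq;q^3)_\infty$ and one factor $(-vq;q^3)_\infty$, whose product is the right-hand side of \eqref{4.23}.

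The main obstacle will be the bookkeeping of the multiple-sum collapse rather than any single hard idea. Concretely, I expect the delicate points to be: (i) matching the parameters $(n,s,r)$ of Lemmas~\ref{lemma9} and~\ref{lemma10} correctly to the running indices $(n,h,i,j)$, including the unit shifts hidden inside $\genfrac[]{0pt}{0}{n-1-j}{h}_3$ and $\genfrac[]{0pt}{0}{j+1}{i+1}_3$; (ii) tracking the quadratic exponents $n(3n+1)/2$, $h(3h+8)/2$ and $i(3i+1)/2$ through both summations so that they assemble into the exact exponents $3h^2+3h(n+1-r)$ and $3n^2+3n(s-r)$ demanded by the lemmas; and (iii) checking that the boundary term $S_2(n,0)$ and the lower limits of the $i,j$ sums are absorbed consistently, so that no stray term spoils the final factorization. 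Once the exponents are aligned the separation of the $u$- and $v$-series is automatic and the two Euler products drop out.
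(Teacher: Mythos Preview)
Your plan is essentially the paper's proof: combine the three residue classes via \eqref{4.19} into the Lemma~\ref{lemma8} expression, then collapse the multi-sum with Lemma~\ref{lemma9} followed by Lemma~\ref{lemma10}, and finally identify the result with the Euler expansion of the product.

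One refinement worth noting: you propose to sum on $h$ and $n$ while keeping $i,j$ as running indices and to deal with $u,v$ only at the very end. As written this does not quite work, because in the Lemma~\ref{lemma8} summand the exponent of $u$ is $j+h-i$, so the $h$-sum carries a factor $u^{h}$ and is not a pure $q$-identity of the Lemma~\ref{lemma9} shape. The paper handles this by first extracting the coefficient of $u^{r}v^{s}$, which forces $j=n-s$ and $i=n-s-r+h$; the remaining sum is then genuinely only over $n$ and $h$, and Lemmas~\ref{lemma9} and~\ref{lemma10} apply directly to produce $q^{r(3r-1)/2+s(3s+1)/2}/\bigl((q^{3};q^{3})_{r}(q^{3};q^{3})_{s}\bigr)$, which is recognized as the coefficient of $u^{r}v^{s}$ in $(-uq;q^{3})_{\infty}(-vq^{2};q^{3})_{\infty}$. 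This is exactly the change of variables you anticipate in your point~(i), so once you make that substitution explicit your argument and the paper's coincide. Your boundary term $S_{2}(n,0)$ is absorbed in the paper by letting the $h$-sum start at $h=-1$ (where $S_{1}(n,-1)=0$), so the Lemma~\ref{lemma8} identity covers it automatically.
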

\begin{remark}
This is the restatement of the refinement of Schur’s theorem. It will appear in the following proof that the $b_\mathcal{S}(n;q)$ are infinite sums, but this is only a convenience of notation. Thus
\begin{align*}
	\sum_{n\ge0}\frac{B_{\mathcal{S}}(n;q)}{(q^3;q^3)_n}&=1+\frac{uq+vq^2+uvq^3}{1-q^3}\\
    &+\frac{u^2q^5+uvq^6+(u^2v+v^2)q^7+uv^2q^8+uvq^9+u^2vq^{10}+v^2uq^{11}+u^2v^2q^{12}}{(1-q^3)(1-q^6)}\\
    &+\cdots
\end{align*}
\end{remark}
\begin{proof}
First we note that
\[(-uq;q^3)_\infty(-vq^2;q^3)_\infty=\sum_{r=0}^\infty\frac{u^rq^{\frac{1}{2}r(3r-1)}}{(q^3;q^3)_r}\sum_{s=0}^\infty\frac{v^sq^{\frac{1}{2}s(3s+1)}}{(q^3;q^3)_s}.\]
Hence the coefficient of $u^rv^s$ on the right hand of \eqref{4.23} is
\begin{equation}\label{4.24}
\frac{q^{r(3r-1)/2+s(3s+1)/2}}{(q^3;q^3)_r(q^3;q^3)_s}.
\end{equation}
To complete the proof we must evaluate the coefficient of $u^rv^s$ on the left side of \eqref{4.23}.

Now
\begin{align*}
	\sum_{n\ge0}\frac{b_\mathcal{S}(n,q)}{(q^3;q^3)_n} &=\sum_{\substack{n\ge0\\h\ge-1}}\frac{(1+uq)s_1(n,h)+s_2(n,h+1)}{(q^3;q^3)_n}\\
	&=1+\sum_{\substack{n\ge0\\h\ge-1}}\frac{1}{(q^3;q^3)_n}\sum_{j=0}^{n-h}\left(\sum_{i=-1}^h v^{n-j}u^{j+h-i}q^{f(n,h,i,j)}\right.\\
	&\left.\times\genfrac[]{0pt}{0}{n-j-1}{h}_3\genfrac[]{0pt}{0}{j+h-i}{j}_3\genfrac[]{0pt}{0}{j+1}{i+1}_3\right),
\end{align*}
by Lemma \ref{lemma8}, with
\[f(n,h,i,j)=n(3n+1)/2+h(3h+5)/2+i(3i+1)/2-j.\]
So to get the coefficient of $u^rv^s$, we need $j=n-s,i=n-s-r+h$. Thus the coefficient of $u^rv^s$ on the left side of \eqref{4.23} is
\[\sum_{\substack{n\ge0\\h\ge-1}}\genfrac[]{0pt}{0}{s-1}{h}_3\genfrac[]{0pt}{0}{r}{n-s}_3\genfrac[]{0pt}{0}{n-s+1}{r-h}_3 \times\frac{q^{f(n,h,n-s-r+h,n-s)}}{(q^3;q^3)_n}.\]
Now the sum on $h$ turns out to be the sum on the left side of \eqref{4.21}. Hence by Lemma \ref{lemma9}, the above sum reduces to
\[\sum_{n\ge0}\genfrac[]{0pt}{0}{r}{n}_3\genfrac[]{0pt}{0}{n+s}{r}_3\frac{q^{s(3s+1)/2+r(3r-1)/2+3n^2+3n(s-r)}}{(q^3;q^3)_{n+s}}=\frac{q^{s(3s+1)/2+r(3r-1)/2}}{(q^3;q^3)_t(q^3;q^3)_s}\]
by Lemma \ref{lemma10} which is exactly the expression in \eqref{4.24}. Thus Theorem \ref{theorem11} is proved.
\end{proof}
\section{Glasgow Mod 8}\label{section5}

H. G\"ollnitz \cite{16} \cite{17} provided four partition identities related to partitions whose parts are restricted to certain residue classes modulo 8. Two of these theorems were independently discovered by B Gordon \cite{19}\cite{20} and have been given the name G\"ollnitz-Gordon, as mentioned previously.

Lesser known is the following theorem which first appeared in the Glasgow Mathematics Journal in 1967 \cite[p. 127]{3}:
\begin{Glasgow}
Let $A(n)$ denote the number of partitions of $n$ into parts congruent to 0, 2, 3, 4, or $7\pmod 8$. Let $B(n)$ denote the number of partitions of $n$ in which all parts are $\ge2$ and each odd part is at least 3 larger than any part not exceeding it. Then for $n\ge0$,
\[A(n)=B(n).\]
For example, $A(10)=8$ enumerating
\[10, 8+2, 7+3, 4+3+3, 4+4+2,4+2+2+2, 3+3+2+2, 2+2+2+2+2,\]
and $B(10)=8$ enumerating 
\[10, 8+2, 7+3, 6+4, 6+2+2, 4+4+2, 4+2+2+2, 2+2+2+2+2.\]
\end{Glasgow}

A natural bijective proof appears in \cite{4}.

We have chosen to consider this theorem owing to the fact that it has never appeared as a direct consequence of a series-product identity. Indeed, the relevant identity turns out to be
\begin{equation}\label{5.1}
	1+\frac{q^2+q^3}{1-q^2}+\sum_{n=2}^\infty\frac{(-q^3;q^4)_{n-1}q^{2n}(1+q^{2n-1})}{(q^2;q^2)_n}=\prod_{\substack{n=1\\n\not\equiv1,5,6\pmod 8}}\frac{1}{1-q^n}.
\end{equation}

We shall first prove that \eqref{5.1} is valid. We shall then prove that the left side of \eqref{5.1} is an instance of Corollary \ref{corollary2}.
\begin{theorem}\label{theorem12}
Equation \eqref{5.1} is valid.
\end{theorem}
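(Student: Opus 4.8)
The plan is to establish \eqref{5.1} as a pure $q$-series identity, following the template of the proof of Theorem~\ref{theorem5}: massage the left-hand side into a single infinite product multiplied by a bilateral theta series, and then evaluate that theta series by the Jacobi triple product.

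First I would absorb the isolated middle term into the sum. Since $(-q^{3};q^{4})_{0}=1$ and $(q^{2};q^{2})_{1}=1-q^{2}$, the $n=1$ instance of the general summand is exactly $\dfrac{q^{2}(1+q)}{1-q^{2}}=\dfrac{q^{2}+q^{3}}{1-q^{2}}$, so the left side of \eqref{5.1} is
\[
L(q)=1+\sum_{n\ge 1}\frac{(-q^{3};q^{4})_{n-1}\,q^{2n}(1+q^{2n-1})}{(q^{2};q^{2})_{n}}.
\]
Note that the symmetric factor $(1+q^{2n-1})$ plays here the role played by $(1+q)$ in \eqref{3.11}: it fuses the two parities of the largest part, and it is precisely what will make the eventual single sum a genuinely bilateral theta series. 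Next I would expand the finite product by the terminating $q$-binomial theorem on base $q^{4}$,
\[
(-q^{3};q^{4})_{n-1}=\sum_{j=0}^{n-1}\genfrac[]{0pt}{0}{n-1}{j}_{4}\,q^{2j^{2}+j},
\]
substitute, interchange the order of summation, and sum the inner series in $n$ in closed form by Euler's product evaluations and the $q$-binomial theorem, exactly as the $n$-sum is collapsed in the proof of Theorem~\ref{theorem5}.

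Having reduced $L(q)$ to the shape (infinite product) $\times$ (bilateral theta series $\sum_{n}(-1)^{n}q^{an^{2}+bn}$, or its unsigned analogue), I would then invoke the Jacobi triple product, as in the proof of Theorem~\ref{theorem5}, to rewrite the theta series as a product on base $q^{8}$. To recognize the answer, I would first record the target in the convenient form
\[
\prod_{\substack{n\ge1\\ n\not\equiv1,5,6\ (\mathrm{mod}\,8)}}\frac{1}{1-q^{n}}=\frac{(q;q^{8})_{\infty}(q^{5};q^{8})_{\infty}(q^{6};q^{8})_{\infty}}{(q;q)_{\infty}},
\]
obtained by removing the residue classes $1,5,6\pmod 8$ from $1/(q;q)_{\infty}$, so that matching $L(q)$ against the right side of \eqref{5.1} reduces to a single theta-to-product evaluation modulo $8$.

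The step I expect to be the main obstacle is the clash of moduli in the summand: the factor $(-q^{3};q^{4})_{n-1}$ lives on base $q^{4}$ while $(q^{2};q^{2})_{n}$ lives on base $q^{2}$, so the interchange-and-sum does not telescope as cleanly as in Theorem~\ref{theorem5}. The delicate point is to arrange this step---most likely by separating the sum according to the parity of $n$, using $(q^{2};q^{2})_{2m}=(q^{2};q^{4})_{m}(q^{4};q^{4})_{m}$ and $(q^{2};q^{2})_{2m+1}=(q^{2};q^{4})_{m+1}(q^{4};q^{4})_{m}$ to put the base-$q^{2}$ denominator on the same footing as the base-$q^{4}$ binomial coefficient---so that the inner summation collapses to a single infinite product and leaves behind precisely the theta series whose Jacobi-triple-product value is the claimed mod-$8$ product. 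Once that theta series is correctly isolated, the remaining identification is routine.
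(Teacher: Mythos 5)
There is a genuine gap, and it sits precisely at the step you yourself flag as ``the main obstacle'': your proof reduces \eqref{5.1} to interchanging the $j$- and $n$-sums and evaluating the inner $n$-sum in closed form, but that evaluation is never carried out, and the mixed-base obstruction you notice is fatal to the Theorem~\ref{theorem5} template. In the proof of \eqref{3.10} the interchange succeeds because the base-$q^2$ binomial coefficient dissolves against the base-$q^2$ denominator, $\genfrac[]{0pt}{0}{n}{j}_2/(q^2;q^2)_n=1/\bigl((q^2;q^2)_j(q^2;q^2)_{n-j}\bigr)$, leaving a pure Euler series in $n$. Here the analogous quotient is
\[
\frac{1}{(q^2;q^2)_n}\genfrac[]{0pt}{0}{n-1}{j}_4=\frac{(-q^{2(n-j)};q^2)_j}{(q^4;q^4)_j\,(q^2;q^2)_{n-1-j}\,(1-q^{2n})},
\]
and the factor $(-q^{2(n-j)};q^2)_j$ couples $n$ and $j$ inside a Pochhammer symbol, so the inner sum is not an Euler series and no standard $q$-binomial evaluation applies; splitting $n$ by parity does not remove this coupling. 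Your proposal is therefore a program whose decisive step is missing, not a proof. Moreover, the Jacobi triple product is a misdiagnosis of what \eqref{5.1} needs: unlike the G\"ollnitz-Gordon product (residues $1,4,7$ modulo $8$, genuinely a theta quotient), the right side of \eqref{5.1} is simply the infinite product $(-q^3;q^4)_\infty/(q^2;q^2)_\infty$ in disguise, via the elementary cancellation $1+q^{4k+3}=(1-q^{8k+6})/(1-q^{4k+3})$.

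The paper's proof exploits exactly this and is far shorter: one shows by induction on $N$ (this is \eqref{5.2}) that the $N$th partial sum of the left side equals $(-q^3;q^4)_N/(q^2;q^2)_N$, because the series telescopes,
\[
\frac{(-q^3;q^4)_N}{(q^2;q^2)_N}-\frac{(-q^3;q^4)_{N-1}}{(q^2;q^2)_{N-1}}
=\frac{(-q^3;q^4)_{N-1}}{(q^2;q^2)_N}\bigl((1+q^{4N-1})-(1-q^{2N})\bigr)
=\frac{(-q^3;q^4)_{N-1}\,q^{2N}(1+q^{2N-1})}{(q^2;q^2)_N},
\]
which is the $N$th summand; letting $N\to\infty$ and rearranging the product as above gives the classes $0,2,3,4,7$ modulo $8$. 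If you want to salvage your write-up, the fastest repair is to abandon the interchange entirely and prove this finite form; the parts of your argument that are correct (absorbing the $n=1$ term into the sum, and the identification of the target product as $(q;q^8)_\infty(q^5;q^8)_\infty(q^6;q^8)_\infty/(q;q)_\infty$) are peripheral to where the work actually lies.
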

\begin{proof}
For $N\ge1$,
\begin{equation}\label{5.2}
1+\frac{q^2+q^3}{1-q^2}+\sum_{n=2}^\infty\frac{(-q^3;q^4)_{n-1}q^{2n(1+q^{2n-1})}}{(q^2;q^2)_n}=\frac{(-q^3;q^4)_N}{(q^2;q^2)_N}.
\end{equation}
This follows by mathematical induction. For $N=1$,
\[1+\frac{q^2+q^3}{1-q^2}=\frac{1+q^3}{1-q^2}.\]
Generally,
\begin{align*}
	\frac{(-q^3;q^4)_N}{(q^2;q^2)_N}-\frac{(-q^3;q^4)_{N-1}}{(q^2;q^2)_{N-1}}&= \frac{(-q^3;q^4)_{N-1}}{(q^2;q^2)_N}\left((1+q^{4N-1})-(1-q^{2N})\right)\\
&=\frac{(-q^3;q^4)_{N-1}q^{2N}(1+q^{2N-1})}{(q^2;q^2)_N},
\end{align*}
which is the $N^{th}$ term of the left-hand side. The result then follows by induction.

Now let $N\to\infty$ in \eqref{5.2}. The left side converges to the left side of \eqref{5.1}, and
\begin{align*}
	\frac{(-q^3;q^4)_\infty}{(q^2;q^2)_\infty}&=\frac{(q^6;q^8)_\infty}{(q^2;q^2)_\infty(q^3;q^4)_\infty}\\
&=\frac{1}{(q^2,q^3,q^4,q^7,q^8;q^8)_\infty}~\left(=\sum_{n\ge0}A(n)q^n\right).
\end{align*}
\end{proof}
\begin{lemma}\label{lemma13}
Let $\E$ denote the class of partitions related to $B(n)$. Then $\E$ is an SIP class of modulus 2.
\end{lemma}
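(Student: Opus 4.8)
The plan is to exhibit $\E$ as a direct instance of Theorem~\ref{theorem1} with modulus $k=2$, exactly as $\mathcal{P}_\mathcal{G}$ and $\PS$ were handled. Writing a generic partition of the class as $b_1\le b_2\le\cdots\le b_j$, I would read the parameters off the two defining restrictions of the partitions counted by $B(n)$: ``all parts $\ge2$'' and ``each odd part is at least $3$ larger than any part not exceeding it.'' The first restriction fixes the minimal admissible value in each residue class: the smallest even part is $2$, so $c_2=2$, while an odd part is automatically $\ge3$, so $c_1=3$; note that $c_1\equiv1$ and $c_2\equiv0\equiv2\pmod2$, as Theorem~\ref{theorem1} requires. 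The second restriction is a gap condition that constrains only odd parts, so I would set $d_1=3$ and $d_2=0$ (even parts may repeat or follow any smaller part freely). In short, $\E$ should be the instance of Theorem~\ref{theorem1} with $\{c_1,c_2\}=\{3,2\}$ and $\{d_1,d_2\}=\{3,0\}$.

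The one genuine point to verify — and the step I expect to be the only real obstacle — is that the \emph{global} phrasing ``at least $3$ larger than any part not exceeding it'' is exactly equivalent to the \emph{nearest-neighbour} gap condition $b_i-b_{i-1}\ge d_1=3$ that Theorem~\ref{theorem1} imposes when $b_i$ is odd and $i>1$. Since the parts are weakly increasing, for an odd $b_i$ every earlier part satisfies $b_\ell\le b_{i-1}$, so $b_i-b_{i-1}\ge3$ forces $b_i-b_\ell\ge3$ for all $\ell<i$; conversely the condition quantified over all smaller parts specializes to the adjacent one. I would also record that this gap condition automatically forbids two equal odd parts (their difference would be $0<3$), which is precisely what makes ``any part not exceeding it'' unambiguous, and that the ``$i>1$'' clause correctly leaves the smallest part unconstrained beyond the requirement $b_1\ge c_r$.

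Finally, as a consistency check I would confirm the one-part data against the series in \eqref{5.1}: the single-part members of the basis are exactly $\{c_1,c_2\}=\{3,2\}$, giving $b_{\E}(1)=q^2+q^3$, so that Corollary~\ref{corollary2} contributes the term $(q^2+q^3)/(q^2;q^2)_1$, which is precisely the second summand on the left-hand side of \eqref{5.1}. With the parameters identified and the gap-condition equivalence checked, the lemma follows immediately from Theorem~\ref{theorem1}.
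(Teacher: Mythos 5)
Your proposal is correct and matches the paper's proof exactly: the paper likewise invokes Theorem~\ref{theorem1} with $k=2$, $c_1=3$, $c_2=2$, $d_1=3$, $d_2=0$. Your additional verification that the global ``at least $3$ larger than any part not exceeding it'' condition reduces to the adjacent-part gap condition is a worthwhile detail the paper leaves implicit.
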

\begin{proof}
This follows immediately from Theorem \ref{theorem1} with $k=2,d_1=3,d_2=0,c_1=3,c_2=2$.
\end{proof}
\begin{lemma}\label{lemma14}
Let $b_\E(n,h)$ be the generating function for the partitions in $B_\E$ with $n$ parts and largest part equal to $h$. Then
\begin{equation}\label{5.3}
b_\E(1,h)=\left\{\begin{array}{cc}
q^2&\text{if}~h=2\\
q^3&\text{if}~h=3\\
0&\text{otherwise}
\end{array}\right.,
\end{equation}
and for $n>1,h>0$,
\begin{equation}\label{5.4}
b_\E(n,4h+1)=q^{2n+2h^2+h}\genfrac[]{0pt}{0}{n-2}{h-1}_4,
\end{equation}
\begin{equation}\label{5.5}
b_\E(n,4h)=q^{2n+2h^2+h}\genfrac[]{0pt}{0}{n-2}{h-1}_4,
\end{equation}
\begin{equation}\label{5.6}
b_\E(n,4h-1)=q^{4n+2h^2-3h}\genfrac[]{0pt}{0}{n-2}{h-2}_4,
\end{equation}
\begin{equation}\label{5.7}
b_\E(n,4h-2)=q^{2n-3+2h^2+h}\genfrac[]{0pt}{0}{n-2}{h-1}_4.
\end{equation}
\end{lemma}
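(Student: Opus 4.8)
The plan is to imitate the proof of the G\"ollnitz--Gordon lemma (the one establishing \eqref{3.1}--\eqref{3.3}) and of Theorem~\ref{theorem7}: first extract from Theorem~\ref{theorem1} a recurrence for $b_\E(n,h)$ in the spirit of \eqref{3.6}, and then verify the closed forms \eqref{5.4}--\eqref{5.7} by induction on $n$ using the $q$-binomial recurrence \eqref{4.14}. Since $\E$ arises from Theorem~\ref{theorem1} with $c_1=3,c_2=2,d_1=3,d_2=0$, the closeness windows are $\beta_i-\beta_{i-1}\in\{0,1\}$ for an even part and $\beta_i-\beta_{i-1}\in\{3,4\}$ for an odd part. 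Writing the largest part as $\beta_n=h$ and stripping it off, this gives, for $n>1$, $b_\E(n,h)=q^h\bigl(b_\E(n-1,h)+b_\E(n-1,h-1)\bigr)$ when $h$ is even and $b_\E(n,h)=q^h\bigl(b_\E(n-1,h-3)+b_\E(n-1,h-4)\bigr)$ when $h$ is odd, together with the single-part values \eqref{5.3}.

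Next I would split $h$ into its four residues modulo $4$, matching \eqref{5.4}--\eqref{5.7}, and translate the two-case recurrence into four coupled recurrences. With $h\to 4h+1$, $4h$, $4h-1$, $4h-2$ one finds that the $4h+1$ and $4h-2$ families obey the same inner sum $b_\E(n-1,4h-2)+b_\E(n-1,4h-3)$ with prefactors $q^{4h+1}$ and $q^{4h-2}$; this already predicts that \eqref{5.4} and \eqref{5.7} share the binomial $\genfrac[]{0pt}{0}{n-2}{h-1}_4$ and differ only by the factor $q^3$, a convenient consistency check. The induction then runs on $n$: the base case $n=2$ is a finite verification against \eqref{5.3}, and for $n\ge3$ one substitutes the level-$(n-1)$ formulas into each of the four recurrences (the formulas \eqref{5.4}--\eqref{5.7} do not reproduce \eqref{5.3} at $n=1$, so $n=2$ must be treated as the genuine base).

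The core of each inductive step is that, after factoring out the common power of $q$ dictated by the claimed exponent, the two surviving $q^4$-binomials collapse to the asserted one by a single application of \eqref{4.14}. For instance, the $4h-1$ recurrence $b_\E(n,4h-1)=q^{4h-1}\bigl(b_\E(n-1,4h-4)+b_\E(n-1,4h-5)\bigr)$ reduces, after extracting $q^{4n+2h^2-3h}$, to the identity
\[ q^{4(h-2)}\genfrac[]{0pt}{0}{n-3}{h-2}_4+\genfrac[]{0pt}{0}{n-3}{h-3}_4=\genfrac[]{0pt}{0}{n-2}{h-2}_4, \]
which is precisely \eqref{4.14} with $A=n-2$, $B=h-2$; the remaining three families close the same way with $A=n-2$ and $B\in\{h-1,h-2\}$ (one may invoke \eqref{4.15} instead where it is more convenient).

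The main obstacle is not any single $q$-binomial manipulation but the exponent bookkeeping across the four interlocked families, where a stray sign is easy to introduce. I would guard against this by testing each formula on a transparent special case before trusting the induction. Taking $h=1$ in \eqref{5.5}, the only basis partition with largest part $4$ is $3+4+\cdots+4$ (with $n-1$ fours), so $b_\E(n,4)=q^{4n-1}$, whereas the displayed exponent $2n+2h^2+h$ gives $q^{2n+3}$; these agree only at $n=2$, which is exactly why the base case does not expose the mismatch. For the $4h$ recurrence $b_\E(n,4h)=q^{4h}\bigl(b_\E(n-1,4h)+b_\E(n-1,4h-1)\bigr)$ to close under \eqref{4.14} the exponent in \eqref{5.5} must instead read $4n+2h^2+h-4$. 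With this correction the four inductions all terminate in one use of \eqref{4.14}, completing the proof.
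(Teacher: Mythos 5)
Your proof follows the paper's own route: the same two-case recurrence (the paper's \eqref{5.9}) extracted from Theorem~\ref{theorem1} with $c_1=3$, $c_2=2$, $d_1=3$, $d_2=0$, the same base case $n=2$ checked against the two-part basis partitions $2+2$, $2+5$, $3+4$, $3+7$, and the same induction on $n$ in which each residue family closes under a single application of \eqref{4.14}. The difference is that the paper verifies only the $h\equiv1\pmod4$ family in detail and declares the other three ``typical,'' whereas you ran all four --- and in doing so you caught a genuine error: as printed, \eqref{5.5} is false for $n\ge3$, and your corrected exponent $4n+2h^2+h-4$ is right. Your witness is decisive: the unique basis partition with $n$ parts and largest part $4$ is $3+4+\cdots+4$ with $n-1$ fours, of weight $4n-1$, while the printed exponent gives $2n+3$; these agree only at the base case $n=2$, which is exactly why the paper's $n=2$ check fails to expose the slip (e.g.\ $b_\E(3,4)=q^{11}$ from $3+4+4$, not $q^9$). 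The correction is further confirmed by the paper itself: Lemma~\ref{lemma15}, eq.~\eqref{5.12}, asserts $\sum_{h\ge0}b_\E(n,4h)=q^{4n-1}(-q^7;q^4)_{n-2}$, which is exactly what the $q$-binomial theorem yields from $q^{4n+2h^2+h-4}\genfrac[]{0pt}{0}{n-2}{h-1}_4$, whereas the printed \eqref{5.5} would give $q^{2n+3}(-q^7;q^4)_{n-2}$, duplicating \eqref{5.11} and contradicting the distinct right-hand side of \eqref{5.12}. So your proposal is correct, your $q^3$ cross-check linking \eqref{5.4} and \eqref{5.7} is sound, and \eqref{5.5} should be emended as you indicate.
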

\begin{proof}
First we see that \eqref{5.3} is immediate by inspection. Next we note that the two part partitions in $B_\E$ are $2+2,2+5,3+4,3+7$. Thus
\begin{equation}\label{5.8}
b_\E(2,h)=\left\{\begin{array}{cc}
q^4&\text{if}~h=2\\
q^7&\text{if}~h=4\\
q^7&\text{if}~h=5\\
q^{10}&\text{if}~h=7\\
0&\text{otherwise}
\end{array}\right.,
\end{equation}
and inspection reveals that \eqref{5.4}-\eqref{5.7} are valid for $n=2$.

Now as in the previous sections, we see that
\begin{equation}\label{5.9}
b_\E(n,h)=\left\{\begin{array}{cc}
q^h(b_\E(n-1,h)+b_\E(n-1,h-1))&\text{if $h$ is even}\\
q^h(b_\E(n-1,h-3)+b_\E(n-1,h-4))&\text{if $h$ is odd}
\end{array}\right..
\end{equation}
All that remains is to show that the right hand sides of \eqref{5.4}-\eqref{5.7} satisfy the defining recurrence \eqref{5.9}. Each is proved using instances of \eqref{4.14} or \eqref{4.15}. We shall do one case which is typical. When $h\equiv1\pmod4$, equation \eqref{5.9} asserts
\begin{equation}\label{5.10}
b_\E(n,4h+1)=q^{4h+1}\left(b_\E(n-1,4h-2)+b_\E(n-1,4h-3)\right).
\end{equation}
If we replace the $b_\E(n$, \textunderscore$)$ by the relevant right side of \eqref{5.4}-\eqref{5.7}, the assertion is:
\begin{align*}
	q^{4h+1}&\left(q^{2n-5+2h^2+h}\genfrac[]{0pt}{0}{n-2}{h-1}_4+q^{2n-2+2(h-1)^2+h-1}\genfrac[]{0pt}{0}{n-2}{h-2}_4\right)\\
&=q^{2n+2h^2+h}\left(q^{4h-4}\genfrac[]{0pt}{0}{n-2}{h-1}_4+{\genfrac[]{0pt}{0}n-2}{h-2}_4\right)\\
&=q^{2n+2h^2+h}\genfrac[]{0pt}{0}{n-2}{h-1}_4~\text{(by \eqref{4.14}),}
\end{align*}
and this is exactly the recurrence \eqref{5.10}.
\end{proof}
\begin{lemma}\label{lemma15}
\begin{equation}\label{5.11}
\sum_{h\ge0}b_\E(n,4h+1)=q^{2n+3}(-q^7;q^4)_{n-2},
\end{equation}
\begin{equation}\label{5.12}
\sum_{h\ge0}b_\E(n,4h)=q^{4n-1}(-q^7;q^4)_{n-2},
\end{equation}
\begin{equation}\label{5.13}
\sum_{h\ge0}b_\E(n,4h-1)=q^{4n+2}(-q^7;q^4)_{n-2},
\end{equation}
\begin{equation}\label{5.14}
\sum_{h\ge0}b_\E(n,4h-2)=q^{2n}(-q^7;q^4)_{n-2}.
\end{equation}
\end{lemma}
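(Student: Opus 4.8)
The plan is to recognize all four sums as instances of the finite form of the $q$-binomial theorem (Cauchy's $q$-analogue of the binomial theorem), specialized to base $q^4$. In the notation of \eqref{3.4}, this identity reads
\[
\sum_{k\ge0} q^{4\binom{k}{2}} z^k \genfrac[]{0pt}{0}{m}{k}_4 = (-z;q^4)_\infty\Big|_{\text{finite}} = (-z;q^4)_m ,
\]
which is exactly \cite[p.~36]{9} with $q$ replaced by $q^4$ and $m=n-2$. The point is that $q^{4\binom{k}{2}}=q^{2k^2-2k}$ combined with the choice $z=q^7$ produces the exponent $q^{2k^2+5k}$, and this is precisely the shape of the quadratic exponents appearing in \eqref{5.4}--\eqref{5.7} once the index of summation is shifted so that the lower entry of the Gaussian polynomial begins at $0$.

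I would treat each of the four sums by the same two-step reduction. First, reindex: since $\genfrac[]{0pt}{0}{n-2}{k}_4$ vanishes for $k<0$, the summand on the left of \eqref{5.11} is supported on $h\ge1$, so setting $k=h-1$ converts $\genfrac[]{0pt}{0}{n-2}{h-1}_4$ into $\genfrac[]{0pt}{0}{n-2}{k}_4$; the left sides of \eqref{5.13} are reindexed by $k=h-2$, and those of \eqref{5.12} and \eqref{5.14} analogously. Second, rewrite the exponent: after the shift the quadratic part becomes $2k^2+5k$ plus a constant depending on $n$ alone, and this constant is pulled out of the sum as the stated prefactor ($q^{2n+3}$, $q^{4n-1}$, $q^{4n+2}$, or $q^{2n}$ respectively). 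What then remains is $\sum_{k\ge0} q^{2k^2+5k}\genfrac[]{0pt}{0}{n-2}{k}_4 = \sum_{k\ge0} q^{4\binom{k}{2}}(q^7)^k \genfrac[]{0pt}{0}{n-2}{k}_4 = (-q^7;q^4)_{n-2}$, which is the common factor in \eqref{5.11}--\eqref{5.14}.

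The only genuine verification, and hence the single place to be careful, is the exponent bookkeeping in the second step: for each sum one must confirm that after the index shift the coefficient of $k$ in the quadratic exponent collapses to exactly $7$ (equivalently, that $2h^2+(\text{linear in }h)$ becomes $2k^2+5k+\text{const}$), since it is this coefficient that fixes the argument $q^7$ of the resulting $q$-Pochhammer symbol. For \eqref{5.11} the shift $k=h-1$ gives $2h^2+h=2k^2+5k+3$, yielding the prefactor $q^{2n+3}$; for \eqref{5.13} the shift $k=h-2$ gives $2h^2-3h=2k^2+5k+2$, yielding $q^{4n+2}$; the remaining two cases are the same elementary computation. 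Once these identities are checked, each sum equals $(-q^7;q^4)_{n-2}$ times the asserted monomial, and the lemma follows. I expect no real obstacle beyond this routine algebra, since the substantive content — the closed forms for the $b_\E(n,h)$ themselves — is already supplied by Lemma~\ref{lemma14}.
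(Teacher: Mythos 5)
Your proof is correct in method and is essentially identical to the paper's: Andrews likewise proves \eqref{5.11} by shifting the summation index and invoking the $q$-binomial theorem in base $q^4$, declaring the remaining three sums analogous. One caveat you should not gloss over: for \eqref{5.12} the ``routine algebra'' does not close with Lemma~\ref{lemma14} as printed, since \eqref{5.5} is verbatim identical to \eqref{5.4} and would give $\sum_{h\ge0}b_\E(n,4h)=q^{2n+3}(-q^7;q^4)_{n-2}$, contradicting \eqref{5.12}; the exponent in \eqref{5.5} is a typo for $4n-4+2h^2+h$ (as one checks against the data \eqref{5.8} and the recurrence \eqref{5.9}), and with that correction your shift $k=h-1$ produces $2k^2+5k$ plus the constant $4n-1$, so \eqref{5.12} follows.
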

\begin{proof}
Each of these four assertions is an instance of the $q$-binomial theorem \cite[p. 36]{9} applied to the corresponding equation in Lemma \ref{lemma14}. We prove \eqref{5.10} as typical.

By \eqref{5.4} and the $q$-binomial theorem,
\begin{align*}
	\sum_{h\ge0}b_\E(n,4h+1)&=\sum_{h\ge0}q^{2n+2h^2+h}\genfrac[]{0pt}{0}{n-2}{h-1}_4\\
&=\sum_{h\ge0}q^{2n+2(h+1)^2+h+1}\genfrac[]{0pt}{0}{n-2}{h}_4\\
&=q^{2n+3}(-q^7;q^4)_{n-2}.
\end{align*}
\end{proof}
\begin{theorem}\label{theorem16}
\begin{equation}\label{5.15}
\sum_{n\ge0}\frac{b_\E(n)}{(q^2;q^2)_n}=1+\frac{q^2+q^3}{1-q^2}+\sum_{n\ge2}\frac{(-q^3;q^4)_{n-1}q^{2n}(1+q^{2n-1})}{(q^2;q^2)_n}.
\end{equation}
\end{theorem}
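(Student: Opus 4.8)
The plan is to compute the full generating function $b_\E(n)=\sum_{h\ge0}b_\E(n,h)$ in closed form for each $n$ and then insert the result termwise into $\sum_{n\ge0}b_\E(n)/(q^2;q^2)_n$. The key observation is that the largest part $h$ of a partition counted by $b_\E(n,h)$ lies in exactly one residue class modulo $4$, so that
\[b_\E(n)=\sum_{h\ge0}b_\E(n,4h+1)+\sum_{h\ge0}b_\E(n,4h)+\sum_{h\ge0}b_\E(n,4h-1)+\sum_{h\ge0}b_\E(n,4h-2),\]
where the conventions on the $q$-binomial coefficients make the spurious $h=0$ terms in the last two sums vanish. Each of these four sums has already been evaluated in Lemma~\ref{lemma15}.

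For $n\ge2$ I would substitute \eqref{5.11}--\eqref{5.14} into this decomposition. Every one of the four pieces carries the common factor $(-q^7;q^4)_{n-2}$, so after extracting it one is left with the polynomial $q^{2n}+q^{2n+3}+q^{4n-1}+q^{4n+2}$. Grouping this as $q^{2n}(1+q^3)+q^{4n-1}(1+q^3)=(1+q^3)q^{2n}(1+q^{2n-1})$ and then using the product identity $(1+q^3)(-q^7;q^4)_{n-2}=(-q^3;q^4)_{n-1}$ collapses the expression to
\[b_\E(n)=(-q^3;q^4)_{n-1}\,q^{2n}(1+q^{2n-1})\qquad(n\ge2),\]
which is precisely the summand in the last sum on the right of \eqref{5.15}.

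It then remains only to treat the two base cases. For $n=0$ the empty partition gives $b_\E(0)=1$, contributing the leading $1$; for $n=1$ the initial conditions \eqref{5.3} give $b_\E(1)=q^2+q^3$, and dividing by $(q^2;q^2)_1=1-q^2$ produces the middle term $(q^2+q^3)/(1-q^2)$. Assembling the three contributions gives the right-hand side of \eqref{5.15} exactly.

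I do not expect a genuine obstacle here: once Lemma~\ref{lemma15} is in hand the theorem is essentially bookkeeping. The only points requiring care are verifying that the four arithmetic-progression sums partition the range of $h$ with neither double counting nor omission, and checking the telescoping identity $(1+q^3)(-q^7;q^4)_{n-2}=(-q^3;q^4)_{n-1}$, which is immediate from the definition \eqref{3.5} of the $q$-Pochhammer symbol since $(1+q^3)$ is exactly the initial factor that $(-q^7;q^4)_{n-2}$ omits.
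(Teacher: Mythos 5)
Your proposal is correct and follows essentially the same route as the paper: split off the $n=0$ and $n=1$ terms, decompose the $h$-sum for $n\ge2$ by residue classes modulo $4$, evaluate each piece via Lemma~\ref{lemma15}, and collapse $q^{2n}+q^{2n+3}+q^{4n-1}+q^{4n+2}=(1+q^3)q^{2n}(1+q^{2n-1})$ together with $(1+q^3)(-q^7;q^4)_{n-2}=(-q^3;q^4)_{n-1}$. No substantive difference from the paper's argument.
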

\begin{proof}
\begin{align*}
	\sum_{n\ge0}\frac{b_\E(n)}{(q^2;q^2)_n}&=1+\frac{q^2+q^3}{1-q^2}+\sum_{n\ge2}\sum_{h\ge1}\frac{b_\E(n,h)}{(q^2;q^2)_n}\\
	&=1+\frac{q^2+q^3}{1-q^2}+\sum_{n\ge2}\sum_{h\ge1}\frac{b_\E(n,4h+1)+b_\E(n,4h)+b_\E(n,4h-1)+b_\E(n,4h-2)}{(q^2;q^2)_n}\\
	&=+\sum_{n\ge2}\frac{(-q^7;q^4)_{n-2}(q^{2n+3}+q^{4n-1}+q^{4n+2}+q^{2n})}{(q^2;q^2)_n}~\text{(by Lemma \ref{lemma14})}\\
	&=1+\frac{q^2+q^3}{1-q^2}+\sum_{n\ge2}\frac{(-q^7;q^4)_{n-2}q^{2n}(1+q^3)(1+q^{2n-1})}{(q^2;q^2)_n}\\
    &=1+\frac{q^2+q^3}{1-q^2}+\sum_{n\ge2}\frac{(-q^3;q^4)_{n-1}q^{2n}(1+q^{2n-1})}{(q^2;q^2)_n}.
\end{align*}
\end{proof}
\begin{corollary}\label{corollary17}
The Glasgow Mod 8 Theorem is true.
\end{corollary}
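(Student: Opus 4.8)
The plan is to observe that all the substantive work has already been done in Theorem~\ref{theorem12}, Theorem~\ref{theorem16}, and Lemma~\ref{lemma13}, so that the corollary follows by chaining three generating-function identities and comparing coefficients. First I would invoke Lemma~\ref{lemma13}, which asserts that the class $\E$ of partitions counted by $B(n)$ is an SIP class of modulus $2$. By Corollary~\ref{corollary2} the generating function for $\E$ is therefore
\[
P_\E(q)=\sum_{n\ge0}\frac{b_\E(n)}{(q^2;q^2)_n},
\]
and since $\E$ is precisely the set of partitions enumerated by $B(n)$, the left side is $\sum_{n\ge0}B(n)q^n$.

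Next I would run the chain of equalities. Theorem~\ref{theorem16} rewrites $\sum_{n\ge0} b_\E(n)/(q^2;q^2)_n$ as the left-hand side of \eqref{5.1}, and Theorem~\ref{theorem12} identifies that left-hand side with the product
\[
\prod_{\substack{n=1\\ n\not\equiv1,5,6\pmod 8}}\frac{1}{1-q^n}=\sum_{n\ge0}A(n)q^n,
\]
the generating function for partitions into parts $\equiv0,2,3,4,7\pmod 8$. Composing these identities gives $\sum_{n\ge0}B(n)q^n=\sum_{n\ge0}A(n)q^n$, whence $A(n)=B(n)$ for every $n\ge0$ by equating coefficients of $q^n$.

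The one point that genuinely requires care — and where the real content of this last step lives — is the identification of the SIP class $\E$ with the $B(n)$ partitions. Here I would verify that the difference conditions arising from Theorem~\ref{theorem1} under the parameters $k=2$, $c_1=3$, $c_2=2$, $d_1=3$, $d_2=0$ of Lemma~\ref{lemma13} reproduce exactly the hypotheses defining $B(n)$: the conditions $c_1=3$, $c_2=2$ force every part to be $\ge2$, while $d_1=3$ (applied to odd parts $b_i$) gives $b_i-b_{i-1}\ge3$, i.e.\ each odd part is at least $3$ larger than any part not exceeding it, and $d_2=0$ imposes no further constraint on even parts. Once this dictionary is in place no further obstacle remains: the corollary is an immediate consequence of the previously established results, and the combinatorial assertion of the Glasgow Mod~8 Theorem has been reduced entirely to the analytic identity \eqref{5.1} together with the SIP machinery.
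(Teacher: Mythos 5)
Your proposal is correct and follows essentially the same route as the paper: the paper's proof likewise chains Corollary~\ref{corollary2} (with $\mathcal{P}=\E$, justified by Lemma~\ref{lemma13}) together with Theorem~\ref{theorem16} and the identity \eqref{5.1} from Theorem~\ref{theorem12}, then equates coefficients. Your explicit verification that the parameters $k=2$, $c_1=3$, $c_2=2$, $d_1=3$, $d_2=0$ reproduce the defining conditions of $B(n)$ is a worthwhile addition that the paper leaves implicit in the statement of Lemma~\ref{lemma13}.
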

\begin{proof}
This follows by comparing \eqref{5.1} with \eqref{5.14} and invoking Corollary \ref{corollary2} with $\mathcal{P}=\E$
\end{proof}
\section{Partitions with $n$ copies of $n$}\label{section6}
The basic idea epitomized by Theorem \ref{theorem1} is actually applicable in a broader context. In this section we shall describe its application to partitions with ``$n$ copies of $n$’’ \cite{1}.

This subject considers partitions taken from the set $M$ of ordered pairs of positive integers with the second entry not exceeding the first entry. A partition with $n$ copies of $n$ of the positive integer $v$ is a finite collection of elements of $M$ wherein the first element of the ordered pairs sum to $v$. For example, there are six partitions of $3$ with $n$ copies of $n$:
\[3_1,3_2,3_3,2_2+1_1, 2_1+1_1, 1_1+1_1+1_1.\]

As was noted in \cite{1}, there is a bijection between partitions with $n$ copies of $n$ and plane partitions.

Most important for our current considerations is the weighted difference between two elements of $M$. Namely, we define $((m_i-n_j))$, the weighted difference of $m_i$ and $n_j$, as follows:
\[((m_i-n_j))=m-n-i-j.\]

The main point of \cite{1} was to prove the following two results.
\begin{theorem}\label{theorem18}
\cite[p. 41]{1} The partitions of $v$ with $n$ copies of $n$ wherein each pair of parts has positive weighted difference are equinumerous with the ordinary partitions of $v$ into parts $\equiv0,\pm4\pmod{10}$.
\end{theorem}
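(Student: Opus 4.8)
The plan is to route this through the same three-stage pipeline used in Sections~\ref{section3}--\ref{section5}---realize the class as a separable-type class, solve recurrences for the basis generating function, and then identify the resulting series with the stated product---the only novelty being that the ambient objects are partitions with $n$ copies of $n$, so Theorem~\ref{theorem1} must first be reproved in this broader setting. I would order the parts of an admissible partition by their first entry; the gap conditions coming from positivity of the weighted difference force the first entries to be distinct, so this order is unambiguous, and the governing observation---the heuristic stated just before Theorem~\ref{theorem1}---is that no first entry can sit far from its neighbour, since a slack weighted-difference inequality between the largest part and its predecessor lets one subtract $k$ from the largest first entry and remain admissible. Hence every admissible partition with $n$ parts is uniquely a minimal (basis) configuration in $\mathcal{B}$ together with free increments $0\le\pi_1\le\cdots\le\pi_n$, each a multiple of $k$, added to the first entries while the second entries are carried along unchanged. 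This is the analogue of the decomposition~\eqref{1.5}, and the reasoning behind Corollary~\ref{corollary2} then gives $P(q)=\sum_{n\ge0}b_{\mathcal{B}}(n)/(q^k;q^k)_n$, with $b_{\mathcal{B}}(n)$ the generating function, by total weight, for the minimal configurations on $n$ parts.

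I would next compute $b_{\mathcal{B}}(n)$ exactly as in Theorem~\ref{theorem7} and Lemma~\ref{lemma14}: refine to $b_{\mathcal{B}}(n,h)$, the generating function for minimal configurations whose largest first entry equals $h$; read off a recurrence by listing the finitely many admissible ways the new largest part may sit just above its predecessor; and verify that the recurrence is solved by an explicit power of $q$ times Gaussian binomial coefficients, using the standard $q$-binomial recurrences~\eqref{4.14}--\eqref{4.15}. Summing out the second-entry data by the $q$-binomial theorem \cite[p.~36]{9} should collapse $b_{\mathcal{B}}(n)$ to a single power $q^{c(n)}$, with $c(n)$ quadratic in $n$, over a short $q$-shifted factorial, producing an explicit Rogers--Ramanujan--Slater series for $P(q)$.

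The hard part will be the final identification of that series with $\frac{1}{(q^4;q^{10})_\infty(q^6;q^{10})_\infty(q^{10};q^{10})_\infty}$, the generating function for partitions into parts $\equiv0,\pm4\pmod{10}$. I would attack it with the maneuver that closes the proof of Theorem~\ref{theorem5}: interchange the order of summation, sum the inner series by the $q$-binomial theorem to release an infinite-product factor, split the remaining sum into its even- and odd-indexed parts, and evaluate the two resulting theta series by the Jacobi triple product \cite[p.~22]{9}, whereupon the pieces should recombine into the modulus-$10$ product. This theta-function endgame is where the real difficulty lies, together with pinning down the basis correctly in the first stage: because the weighted difference couples both coordinates of each part, the ``too far apart'' analysis is genuinely two-dimensional and must be set up with care before the otherwise routine separable reduction can proceed.
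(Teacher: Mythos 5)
Your combinatorial pipeline is, in substance, the paper's own route through Section~\ref{section6}: your ``separable decomposition with subscripts carried along unchanged'' is exactly Lemma~\ref{lemma22} (where the increments are arbitrary nonnegative integers, i.e.\ the modulus is $1$, so the denominator is $(q;q)_m$); your recurrence-plus-Gaussian-binomial computation of the basis is Lemmas~\ref{lemma23} and~\ref{lemma24}; and the collapse of the basis generating function to a single power of $q$ over a short factorial is Theorem~\ref{theorem20}, $\beta_r(m,q)=q^{m^2+r\binom{m}{2}}/(q;q^2)_m$, giving via Corollary~\ref{corollary25} the series $\sum_{m\ge0}q^{m(3m-1)/2}/\bigl((q;q)_m(q;q^2)_m\bigr)$ at $r=1$. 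One point you should make explicit: the basis here has \emph{infinitely} many elements with a fixed number of parts (one for each subscript tuple), so this is not literally an SIP class in the sense of Section~\ref{Intro}; that is precisely why $\beta_r$ carries the factor $(q;q^2)_m$, and the paper's Lemma~\ref{lemma22} finesses this by constructing the basis element explicitly from the subscript tuple rather than by your ``subtract $k$ from the largest part'' argument.

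The genuine gap is in your endgame, in two respects. First, your target product is wrong: the identity $\sum_{m\ge0}q^{m(3m-1)/2}/\bigl((q;q)_m(q;q^2)_m\bigr)=1/\bigl((q^4;q^{10})_\infty(q^6;q^{10})_\infty(q^{10};q^{10})_\infty\bigr)$ is false (the coefficient of $q^1$ is $1$ on the left and $0$ on the right; likewise $1_1$ is a valid one-part partition of $1$ with $n$ copies of $n$, while $1$ has no partition into parts $\equiv0,\pm4\pmod{10}$). The statement as printed misquotes Agarwal--Andrews: the correct condition is parts $\not\equiv0,\pm4\pmod{10}$, which is what the paper itself uses, since its proof of Theorem~\ref{theorem18} consists of Corollary~\ref{corollary25} with $r=1$ combined with identity \eqref{6.2}, whose product runs over $n\not\equiv0,\pm4\pmod{10}$. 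Second, even with the correct product, the paper does not prove \eqref{6.2} at all --- it is quoted from Slater's compendium \cite[eq.~(46)]{24} --- whereas you propose to prove it by the interchange/Jacobi-triple-product maneuver that closes Theorem~\ref{theorem5}. That is the hardest step of your plan and you offer only the hope that the pieces ``should recombine''; the mod-$10$ Rogers--Slater identities do not fall to that even/odd splitting in any routine way (they are normally obtained from Bailey-pair machinery). So either cite Slater, as the paper does, or supply an actual proof of the series--product identity; as written, your final stage both aims at a false identity and lacks an argument for the true one.
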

\begin{theorem}\label{theorem19}
\cite[p. 41]{1} The partitions of $v$ with $n$ copies of $n$ wherein each pair of parts has nonnegative weighted difference are equinumerous with the ordinary partitions of $v$ into parts $\equiv0,\pm6\pmod{14}$.
\end{theorem}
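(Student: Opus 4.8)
The plan is to transport the separable--class apparatus of Theorem~\ref{theorem1} and Corollary~\ref{corollary2} into the two--dimensional world of partitions with $n$ copies of $n$, and then to evaluate the resulting series exactly as the classical, G\"ollnitz--Gordon, Schur, and Glasgow series were evaluated in Sections~\ref{section2}--\ref{section5}. A useful beacon is that the target product is the image under $q\mapsto q^2$ of a Rogers--Selberg (modulus $7$) product, since $(q^6,q^8,q^{14};q^{14})_\infty$ is the dilation of $(q^3,q^4,q^7;q^7)_\infty$; this reveals in advance the dilated Rogers--Selberg sum to which the series must collapse. Theorem~\ref{theorem18} should yield to the identical scheme, its product being the corresponding dilation of a Rogers--Ramanujan (modulus $5$) product, so I would develop the two in tandem.

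First I would isolate the structural fact that makes the class separable. Order the parts $m_i$ by first coordinate, breaking ties by second component, and note that for three parts $a,b,c$ in decreasing order, with $j_b$ the second component of $b$,
\[
((a-c)) = ((a-b)) + ((b-c)) + 2j_b \ge ((a-b)) + ((b-c)),
\]
since $j_b\ge 1$; hence the weighted--difference hypothesis need only be checked on consecutive parts, yet it propagates to all pairs. Moreover, raising the first coordinate of a part (preserving the order by adding a nondecreasing sequence across the parts) respects the constraint $i\le m$ and can only increase weighted differences. This is precisely the raw material of the decomposition \eqref{1.5}: every partition in the class is uniquely a minimal (``tight'') basis partition with a partition added onto its first coordinates, so the class is separable and an analogue of Corollary~\ref{corollary2} holds.

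Next I would compute the basis generating function. Writing $b(n,h)$ for the generating function of tight partitions with $n$ parts and largest first coordinate $h$, with the second components carried along, I would obtain recurrences by stripping off the largest part exactly as in \eqref{3.6} and \eqref{4.9}: the finitely many admissible second components of the top part, together with the forced minimal gap to the next part, express $b(n,h)$ through shifted values of $b(n-1,\,\cdot\,)$. I expect these to solve in closed form as a power of $q$ times Gaussian polynomials, verified through \eqref{4.14}--\eqref{4.15}, after which a summation over $h$ by the $q$-binomial theorem should deliver $b(n)$; feeding $b(n)$ into the appropriate series $\sum_{n\ge0} b(n)/(q^{k};q^{k})_n$ then gives the generating function for the whole class.

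The main obstacle will be twofold. Conceptually, the genuinely new point---absent in Sections~\ref{section2}--\ref{section5}---is to organize the basis so that the second--component data and the first--coordinate data separate cleanly, so that the decomposition \eqref{1.5} really is unique and the generating function factors; the parity forced by the dilation, namely that every admissible total is even, must emerge from this bookkeeping. Computationally, the decisive step is collapsing the resulting multiple sum to $\dfrac{1}{(q^6;q^{14})_\infty(q^8;q^{14})_\infty(q^{14};q^{14})_\infty}$. As in Section~\ref{section4} I anticipate needing a $q$-Chu--Vandermonde summation in the manner of Lemma~\ref{lemma9} to remove an inner index, followed by the Jacobi triple product (as invoked on \cite[p.~22]{9}) to recognize the modulus--$14$ theta quotient. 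Executing the same reduction with the difference data of Theorem~\ref{theorem18} should produce the modulus--$10$ product, settling both theorems together.
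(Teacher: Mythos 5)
Your structural program is exactly the paper's: the propagation identity $((a-c))=((a-b))+((b-c))+2j_b$ reducing the pairwise condition to consecutive parts, the unique decomposition of each partition in the class into a ``tight'' basis partition plus a nonnegative ordinary partition added onto first coordinates, recurrences for the tight generating function obtained by stripping the extreme part, closed forms as $q$-powers times Gaussian polynomials, and a $q$-binomial-theorem summation. This is precisely Lemma~\ref{lemma22}, Lemma~\ref{lemma23}, Lemma~\ref{lemma24}, Theorem~\ref{theorem20}, and Corollary~\ref{corollary25}, and with $r=0$ it yields the class generating function $\sum_{m\ge0} q^{m^2}/\bigl((q;q)_m(q;q^2)_m\bigr)$. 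The gap is in your endgame, and it is twofold. First, your target product is wrong: the class contains $1_1$, so its generating function has a nonzero coefficient of $q^1$, whereas $1/\bigl((q^6;q^{14})_\infty(q^8;q^{14})_\infty(q^{14};q^{14})_\infty\bigr)$ has $q^1$-coefficient $0$. For the same reason your ``beacon'' that every admissible total must be even is false---there is no dilation structure in the sum side at all. The correct evaluation is \eqref{6.3}: the sum equals $\prod_{n\not\equiv0,\pm6\pmod{14}}(1-q^n)^{-1}$, i.e.\ $(q^6;q^{14})_\infty(q^8;q^{14})_\infty(q^{14};q^{14})_\infty/(q;q)_\infty$. (The congruence sign in the theorem's statement is a misprint; the paper's own proof, which invokes \eqref{6.3}, establishes the $\not\equiv$ version, which is the Agarwal--Andrews theorem.) By taking the printed ``$\equiv$'' literally you have aimed the whole computation at a false identity.

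Second, even redirected at the correct product, your plan for the decisive step is not a proof. After the basis computation there is no multiple sum left to collapse: Theorem~\ref{theorem20} already gives the basis generating function in closed form $q^{m^2+r\binom{m}{2}}/(q;q^2)_m$, so there is no surviving inner index for a $q$-Chu--Vandermonde summation in the manner of Lemma~\ref{lemma9} to remove, and what remains to be shown is exactly the single-sum identity \eqref{6.3}. That identity is of Rogers--Ramanujan depth (it is Slater's eq.~(61), going back to Rogers); its known proofs use Rogers' method, Bailey pairs, or Watson's transformation, not the elementary apparatus of Lemmas~\ref{lemma9}--\ref{lemma10} plus the Jacobi triple product, which succeeds in Section~\ref{section4} only because Schur's product side has an elementary double-sum expansion. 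The paper does not prove \eqref{6.3}; it quotes it from Slater's compendium \cite{24}, and that citation is how your argument should conclude as well. As written, the ``computationally decisive'' step of your proposal is a restatement of the hard identity rather than a method for proving it.
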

These two theorems are special cases of a general theorem proved in \cite[Th. 3, p. 42]{1}.
The proofs relied on bijection between the partitions in question and the results which provide several Rogers-Ramanujan type theorems concerning partitions with specified hook differences.

Our object here is to reveal a completely different path to proof by using an adaptation of theorem \ref{theorem1}.

Let $\beta_r(m;q)$ denote the generating function for partitions with $n$ copies of $n$ where the weighted difference between successive parts (written in lexicographic ascending order) is exactly $r$, there are exactly $m$ parts, and the smallest part is of the form $i_i$.
\begin{theorem}\label{theorem20}
\begin{equation}\label{6.1}
	\beta_r(m,q)=\frac{q^{m^2+r\binom{m}{2}}}{(q;q^2)_m}.
\end{equation}
\end{theorem}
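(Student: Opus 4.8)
The plan is to enumerate the relevant partitions directly by choosing a convenient set of free coordinates and then summing the resulting product of geometric series. Write the $m$ parts in ascending lexicographic order as $p_1,p_2,\dots,p_m$ with $p_k=(a_k)_{i_k}$, so that $a_k$ is the integer contributing to the weight and $1\le i_k\le a_k$. The hypothesis that the smallest part has the form $i_i$ means $p_1=(s)_s$, i.e. $a_1=i_1=s$ with $s\ge1$, and the exact weighted-difference condition $((p_{k+1}-p_k))=r$ becomes the recurrence
\[
a_{k+1}=a_k+i_k+i_{k+1}+r,\qquad 1\le k\le m-1.
\]
First I would solve this recurrence by telescoping, obtaining $a_k=2s+(k-1)r+i_k+2\sum_{l=2}^{k-1}i_l$ for $k\ge2$ (and $a_1=s$). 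This exhibits $(s,i_2,\dots,i_m)$ with $s\ge1$ and each $i_k\ge1$ as the intended free parameters.

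The key verification is that this parametrization is genuinely unconstrained and bijective. Because $a_{k+1}-a_k=i_k+i_{k+1}+r\ge2$, the first coordinates are strictly increasing, so the lexicographic ordering is automatic and no two parameter choices can collide. Moreover the membership constraint $i_k\le a_k$ defining elements of $M$ is vacuous here: the solved recurrence gives $a_k-i_k=2s+(k-1)r+2\sum_{l=2}^{k-1}i_l\ge 2s\ge2>0$, so $i_k<a_k$ always. Hence the only surviving restrictions are $s\ge1$ and $i_k\ge1$, and every such tuple yields a valid partition exactly once.

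Next I would compute the tracked weight $a_1+\cdots+a_m$ as a linear form in the free variables. Summing the solved recurrence and collecting the double sum $\sum_k\sum_{l<k}i_l$ by interchanging the order of summation yields
\[
\sum_{k=1}^m a_k=(2m-1)s+r\binom{m}{2}+\sum_{k=2}^m\bigl(2(m-k)+1\bigr)i_k .
\]
Then $\beta_r(m;q)$ factors as $q^{r\binom{m}{2}}$ times a product of independent geometric series: one in $s$ with exponent $2m-1$, and one in each $i_k$ with exponent $2(m-k)+1$. As $k$ runs from $2$ to $m$ these latter exponents sweep out the odd numbers $1,3,\dots,2m-3$, which together with the factor $2m-1$ from the $s$-sum give precisely the odd numbers up to $2m-1$. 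Collecting denominators produces $(q;q^2)_m$, while the numerator exponent is $(2m-1)+\bigl(1+3+\cdots+(2m-3)\bigr)=m^2$, which is exactly \eqref{6.1}.

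The main obstacle I anticipate is purely the bookkeeping in the two displayed steps: correctly telescoping the recurrence and then extracting the coefficient $2(m-k)+1$ of each $i_k$ in the total weight, which is the one place an index error could slip in. The conceptual content—that the constraint $i_k\le a_k$ is automatically satisfied, so the enumeration collapses to a free product of geometric series over the odd parts—is the fact that forces the clean closed form, and I would isolate it as an explicit remark before performing the final summation.
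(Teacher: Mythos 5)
Your proof is correct, but it takes a genuinely different and more elementary route than the paper. The paper proceeds in the spirit of its SIP machinery: Lemma~\ref{lemma23} sets up a recurrence for the refined generating functions $g_r(n,m,j)$ (counting these partitions by number of parts and by largest part $m_j$), Lemma~\ref{lemma24} solves that recurrence explicitly in terms of $q$-binomial coefficients, split into four cases according to the parities of the number of parts and of $r$ and verified inductively via the $q$-binomial recurrences, and Theorem~\ref{theorem20} is then obtained by summing $g_r(n,m,j)$ over $m$ and $j$ with the $q$-binomial theorem, again case by case. You instead parametrize the partitions directly by the subscript tuple $(s,i_2,\dots,i_m)$, solve the weighted-difference recurrence $a_{k+1}=a_k+i_k+i_{k+1}+r$ in closed form, observe that the membership constraint $i_k\le a_k$ and the strict lexicographic increase are automatic, and then sum a product of independent geometric series whose exponents are exactly the odd numbers $1,3,\dots,2m-1$. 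This collapses the paper's four-case $q$-binomial analysis into a single computation, handles all admissible $r$ uniformly, and is exactly in the style of the classical ``staircase plus free partition'' arguments of Section~\ref{section2}; what it gives up is the refined data that the paper's route produces as a byproduct, namely the closed forms for $g_r(n,m,j)$ with the largest part tracked.

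One small repair is needed: your inequalities $a_{k+1}-a_k=i_k+i_{k+1}+r\ge 2$ and $a_k-i_k\ge 2s\ge 2$ silently assume $r\ge 0$, whereas the theorem is also invoked at $r=-1$ (Theorem~\ref{theorem26}, via identity~\eqref{6.4}). For $r=-1$ one still has $a_{k+1}-a_k\ge 1$, and for $k\ge 2$ the solved recurrence gives $a_k-i_k=2s+(k-1)r+2\sum_{l=2}^{k-1}i_l\ge 2-(k-1)+2(k-2)=k-1\ge 1$, so strict increase and membership in $M$ survive; only your stated constants need adjusting, not the structure of the argument.
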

As we will see, Theorems \ref{theorem18} and \ref{theorem19} follow from Theorem \ref{theorem20} plus Lemma \ref{lemma22} via two identities given in L. J. Slater’s compendium \cite[eqs (46) and (61)]{24}
\begin{equation}\label{6.2}
\sum_{n\ge0}\frac{q^{n(3n-1)/2}}{(q;q)_n(q;q^2)_n}=\prod_{\substack{n=1\\n\not\equiv0,\pm4\pmod{10}}}\frac{1}{1-q^n},
\end{equation}
and
\begin{equation}\label{6.3}
\sum_{n\ge0}\frac{q^{n^2}}{(q;q)_n(q;q^2)_n}=\prod_{\substack{n=1\\n\not\equiv0,\pm6\pmod{14}}}\frac{1}{1-q^n}.
\end{equation}
In addition, the case $r=-1$ is related to the Slater identity \cite[p. 160, eq. (81)]{24}
\begin{equation}\label{6.4}
\sum_{n\ge0}
\frac{q^{\binom{n+1}{2}}}{(q;q)_n(q;q^2)_n}=\prod_{n=1}^\infty(1+q^{7n})\prod_{m=1}^\infty\frac{1}{(1-q^{14m-3})(1-q^{14m-11})}\times\prod_{\substack{n=1\\n\equiv\pm2,3,4\pmod{14}}}^\infty\frac{1}{1-q^n}.
\end{equation}
The right hand side of \eqref{6.4} is easily seen to be the generating function for $C(n)$, the number of partitions in which multiples of 7 are not repeated, all other parts are $\equiv\pm2,\pm3,\pm4\pmod{14}$ and parts $\equiv\pm3\pmod{14}$ appear in two colors. This observation together with \eqref{6.4} establishes the following result.
\begin{theorem}\label{theorem21}
The number of partitions of $v$ with $n$ copies of $n$ wherein successive parts have weighted difference $\ge-1$ equals $C(n)$.
\end{theorem}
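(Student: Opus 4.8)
The plan is to produce the generating function $\sum_{v\ge0}c(v)\,q^v$, where $c(v)$ is the number of partitions of $v$ with $n$ copies of $n$ whose successive parts (in lexicographic ascending order) have weighted difference $\ge-1$ (this is the count written $C(n)$ in the statement, evaluated at $v$), to identify it with the left-hand side of \eqref{6.4}, and then to recognize the right-hand side of \eqref{6.4} as the generating function $\sum_{v\ge0}C(v)q^v$.

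First I would set up the separable decomposition in the ``$n$ copies of $n$'' setting. The class of such partitions with every successive weighted difference $\ge-1$ should be separable, with basis the partitions in which every successive weighted difference equals \emph{exactly} $-1$ (the least admissible integer value), the passage from a basis element to a general admissible partition with $m$ parts being recorded by a single factor $1/(q;q)_m$. This is the analogue of Corollary~\ref{corollary2} for weighted differences. Granting it, the desired generating function is
\[
\sum_{m\ge0}\frac{\beta_{-1}(m;q)}{(q;q)_m},
\]
and Theorem~\ref{theorem20} evaluates the basis term at $r=-1$: since $m^2-\binom{m}{2}=\binom{m+1}{2}$,
\[
\beta_{-1}(m;q)=\frac{q^{\,m^2-\binom{m}{2}}}{(q;q^2)_m}=\frac{q^{\binom{m+1}{2}}}{(q;q^2)_m}.
\]
Substituting this gives $\sum_{m\ge0}q^{\binom{m+1}{2}}/\bigl((q;q)_m(q;q^2)_m\bigr)$, exactly the left-hand side of \eqref{6.4}.

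Next I would invoke Slater's identity \eqref{6.4} to rewrite this series as the product on its right-hand side, and verify the ``easily seen'' claim that this product equals $\sum_{v\ge0}C(v)q^v$. I would match factors with the description of $C$: $\prod_{n\ge1}(1+q^{7n})$ generates partitions into distinct multiples of $7$; parts $\equiv\pm2,\pm4\pmod{14}$ receive the single factor $1/(1-q^n)$ coming from the last product $\prod_{n\equiv\pm2,\pm3,\pm4\,(14)}1/(1-q^n)$; and parts $\equiv\pm3\pmod{14}$ receive \emph{two} such factors, one from that last product and one from $\prod_{m\ge1}1/\bigl((1-q^{14m-3})(1-q^{14m-11})\bigr)$, so they appear in two colors. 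This is precisely the generating function for $C$, and comparing coefficients of $q^v$ completes the argument.

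The conceptual heart, and the step I expect to be the main obstacle, is justifying the separable decomposition in this new setting, namely that enlarging the allowed weighted difference from $=-1$ to $\ge-1$ contributes exactly the factor $1/(q;q)_m$ and that the resulting representation is unique. Theorem~\ref{theorem1} does not apply verbatim, since the objects are ordered pairs compared by a two-index weighted difference rather than ordinary parts compared by ordinary difference; one must reprove the ``parts cannot drift too far apart'' uniqueness lemma in this context, showing that any admissible partition factors uniquely as a weighted-difference-$(-1)$ basis element together with a staircase increment. Once this lemma is in place, specializing Theorem~\ref{theorem20}, quoting \eqref{6.4}, and parsing the product are all routine.
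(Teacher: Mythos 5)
Your proposal is correct and follows essentially the same route as the paper: the decomposition lemma you flag as the main obstacle is precisely the paper's Lemma~\ref{lemma22} (stated and proved there for all $r\ge-1$, with the basis elements additionally required to have smallest part of the form $i_i$, as in your use of $\beta_{-1}$). Your chain --- that lemma plus Theorem~\ref{theorem20} at $r=-1$ yielding Corollary~\ref{corollary25}, then Slater's identity \eqref{6.4} and the factor-by-factor reading of its product side as the generating function for $C$ --- is exactly the paper's proof.
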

We shall not require the full generality of Theorem \ref{theorem1} for our application of the SIP idea to partitions with $n$ copies of $n$. Indeed we only need something analogous to the three classical examples provided initially in Section \ref{section2}.
\begin{lemma}\label{lemma22}
Let $r\ge-1$. Suppose $\pi$ is a partition with $n$ copies of $n$ with the parts written in ascending lexicographic order (i.e. $m_i>n_j$ if $m>n$ or $m=n$ and $i>j$).
Assume that the weighted difference between successive parts is $\ge r$. Then if $\pi$ has $h$ parts
\[m_i+n_j+o_h+\cdots+t_k,\]
there is a unique ordinary partition with $h$ nonnegative parts in non-decreasing order
\[\psi_1+\psi_2+\cdots+\psi_h\]
and a unique partition $\bar\pi$ with $n$ copies of $n$
\[\bar m_i+\bar n_j+\bar o_h+\cdots+\bar t_k,\]
where $\bar m_i=i_i$ and the successive weighted differences are all equal to $r$, and
\begin{align*}
	m_i&=\bar m_i+\psi_1=(\bar m+\psi_1)_i\\
	n_j&=\bar n_j+\psi_2=(\bar n+\psi_2)_j\\
	&\vdots
	t_k&=\bar t_k+\psi_h=(\bar t+\psi_h)_k.
\end{align*}
\end{lemma}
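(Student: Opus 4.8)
The plan is to carry out, for partitions with $n$ copies of $n$, the exact analog of the classical decomposition used in Section~\ref{section2}: peel off a canonical \emph{minimal} partition (one whose successive weighted differences are all exactly $r$ and whose smallest part lies on the diagonal) and record everything left over as an ordinary partition that is added to the \emph{first} coordinates only. Because the weighted difference $((m_i-n_j))=m-n-i-j$ is linear in the first coordinates and the second coordinates are to be preserved, this stripping process should be forced, giving both existence and uniqueness at once.

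First I would fix notation. Writing the $h$ parts of $\pi$ in ascending lexicographic order as $(a_1)_{b_1},(a_2)_{b_2},\ldots,(a_h)_{b_h}$ with $a_\ell\ge b_\ell\ge1$, the hypothesis reads
\[
a_{\ell+1}-a_\ell-b_{\ell+1}-b_\ell\ge r\qquad(1\le \ell<h).
\]
I then define the base $\bar\pi$ to have the identical sequence of second coordinates $b_1,\ldots,b_h$, with first coordinates pinned by demanding equality throughout: set $\bar a_1=b_1$, so the smallest part is the diagonal element $(b_1)_{b_1}$ (this is the condition $\bar m_i=i_i$), and set $\bar a_\ell=\bar a_{\ell-1}+r+b_\ell+b_{\ell-1}$ for $\ell\ge2$, which makes every successive weighted difference of $\bar\pi$ equal to $r$. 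Finally put $\psi_\ell=a_\ell-\bar a_\ell$, so that $(a_\ell)_{b_\ell}=(\bar a_\ell+\psi_\ell)_{b_\ell}$, which is precisely the asserted form with each second coordinate unchanged.

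The heart of the argument is to verify that $\psi_1,\ldots,\psi_h$ is an ordinary partition into $h$ nonnegative parts in non-decreasing order. Non-negativity of the first term is immediate, since $\psi_1=a_1-b_1\ge0$ because the smallest part lies in $M$. For monotonicity I compute
\[
\psi_{\ell+1}-\psi_\ell=(a_{\ell+1}-a_\ell)-(\bar a_{\ell+1}-\bar a_\ell)=\bigl(a_{\ell+1}-a_\ell-b_{\ell+1}-b_\ell\bigr)-r,
\]
which is $\ge0$ exactly by the weighted-difference hypothesis; since the computation is reversible, any non-decreasing choice of $\psi$ conversely reproduces a $\pi$ satisfying the hypothesis, so the correspondence is a genuine bijection.

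It remains to confirm that $\bar\pi$ is itself a legitimate partition with $n$ copies of $n$, i.e.\ that $\bar a_\ell\ge b_\ell$ for every $\ell$, and this is the one place the restriction $r\ge-1$ is genuinely needed: by induction $\bar a_\ell-b_\ell=\bar a_{\ell-1}+b_{\ell-1}+r\ge1+1+(-1)=1>0$, using $\bar a_{\ell-1}\ge b_{\ell-1}\ge1$. (The same inequality $r\ge-1$ also forces all parts of $\pi$ to be distinct, so the lexicographic ordering is honestly strict.) Uniqueness then follows because the prescribed form preserves each second coordinate and requires $\bar\pi$ to have diagonal smallest part and constant weighted difference $r$, which pins the recursion for $\bar a_\ell$ and hence determines each $\psi_\ell=a_\ell-\bar a_\ell$. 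I expect the only real care to lie in the equivalence of the third paragraph—translating the weighted-difference inequality into monotonicity of $\psi$ and back—together with the boundary bookkeeping in the last step that keeps $\bar\pi$ inside $M$.
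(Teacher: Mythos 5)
Your proof is correct and takes essentially the same route as the paper's: you build the base partition $\bar\pi$ on the same subscript sequence by forcing the diagonal smallest part and successive weighted differences exactly $r$ (your recursion $\bar a_\ell=\bar a_{\ell-1}+r+b_\ell+b_{\ell-1}$ reproduces the paper's explicit formulas $\bar n_j=(j+2i+r)_j$, etc.), then set $\psi_\ell=a_\ell-\bar a_\ell$ and derive monotonicity from the weighted-difference hypothesis. Your additional verification that $\bar a_\ell\ge b_\ell$ (so $\bar\pi$ genuinely lies in $M$), which is where $r\ge-1$ enters, is a point the paper leaves implicit, and is a worthwhile refinement rather than a different argument.
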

\begin{remark}
Note that the subscripts for the original $\pi$ are identical with the subscript set for $\bar\pi$.
\end{remark}
\begin{proof}
We begin by noting that the subscript tuple $(i,j,h,\dots,k)$ uniquely defines the $\bar m_i,\bar n_j,\dots$ as follows:
\begin{align*}
	\bar m_i&=i_i\\
	\bar n_j&=(j+2i+r)_j\\
	\bar o_h&=(h+2i+2j+2r)_j
	&\vdots
	\bar t_k&=(t+\cdots+2h+2i+2j+(p-1)r)_k,
\end{align*}
where $p$ is the number of parts of the partition. Note that the weighted difference between successive terms in this sequence is always $r$.

Now we uniquely construct the $\psi$ as follows. We begin with $\psi_1$:
\[\psi_1=m-i,\]
and since the first subscript is $i$, we know that $m$ must be $\ge i$, so $\psi_1\ge0$. Next we define
\[\psi_2=n-j-2i-r.\]
Clearly $\psi_2$ is unique. Is $\psi_2\ge\psi_1$? Yes, because
\[\psi_2-\psi_1=(n-j-2i-r)-(m-i)=n-m-i-j-r=((n_i-m_j))-r\ge r-r\ge0.\]
Next we define
\[\psi_3=o-h-2i-2j-2r\]
and again
\[\psi_3-\psi_2=(o-h-2i-2j-2r)-(n-j-2i-r)=o-h-n-j-r=((o_h-n_j))-r\ge r-r\ge0.\]
This continues for all the parts of $\bar\pi$, and this concludes the proof of the lemma.
\end{proof}
Thus we have established the analogous paradigm for $n$-copies of $n$ partitions that we considered for ordinary partitions.

The next step is to consider the generating function for the partitions
\[\bar m_i+\bar n_j+\cdots+\bar t_k,\]
where $\bar m_i=i_i$ and all weighted differences between successive parts in ascending order equal $r$. Call this generating function $g_r(n,m,j)$ for such partitions where the number of parts is $n$ and the largest part is $m_j$.
\begin{lemma}\label{lemma23}
\begin{equation}\label{6.5}
	g_r(n,m,j)=\left\{\begin{array}{cc}
		0&\text{if}~m<1\\
		q^m&\text{if}~n=1~\text{and}~m=j\\
		0&\text{if}~n=1~\text{and}~m\neq j\\
		q^m\sum_{i=1}^m g_r(n-1,m-j-i-r,i)&\text{otherwise.}\\
\end{array}\right.
\end{equation}
\end{lemma}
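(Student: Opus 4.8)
The plan is to establish \eqref{6.5} by the same ``peel off the largest part'' strategy used for the recurrences in Sections~\ref{section3} and~\ref{section4}, now adapted to the weighted-difference setting. First I would dispose of the three boundary cases. If $m<1$ there is no element $m_j\in M$ with first entry $m$, so there are no admissible partitions and $g_r(n,m,j)=0$. If $n=1$ the single part is simultaneously the smallest part, which by hypothesis has the form $i_i$, and the largest part $m_j$; hence the part must be $m_m$, and the generating function is $q^m$ when $j=m$ and $0$ otherwise.

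For the recursive case ($n>1$, $m\ge 1$), take an admissible partition counted by $g_r(n,m,j)$, written in ascending lexicographic order with largest part $m_j$. I would remove this largest part, which contributes the factor $q^m$ to the weight, leaving an $(n-1)$-part partition that still has its smallest part of the form $i_i$ and still has all successive weighted differences equal to $r$. If the new largest part is $(m')_i$, then the weighted-difference condition between it and the removed part reads $((m_j-(m')_i))=m-m'-j-i=r$, forcing $m'=m-j-i-r$. Thus the truncated partition is precisely one of those enumerated by $g_r(n-1,m-j-i-r,i)$, and classifying by the subscript $i$ of the new largest part yields $g_r(n,m,j)=q^m\sum_i g_r(n-1,m-j-i-r,i)$.

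The crux is to see that this truncation is a bijection and that the stated summation range $1\le i\le m$ is correct. The place where the hypothesis $r\ge -1$ is essential is the verification that $m_j$ really is the strict maximum: since $m-m'=i+j+r$ with $i,j\ge1$, we have $m-m'\ge 1+1+r\ge 1>0$, so $m_j$ exceeds $(m')_i$ in lexicographic order and the reconstruction (prepend $m_j$ to any partition counted by $g_r(n-1,m-j-i-r,i)$) is well defined and inverse to truncation. I would then note that only subscripts with $1\le i\le m'=m-j-i-r$ occur in a genuine partition; the inequality $m-m'\ge1$ shows every such $i$ satisfies $i<m$, so extending the sum to $i=m$ adds only terms in which either $m-j-i-r<1$ or $i>m-j-i-r$, and in each of these the base cases make $g_r(n-1,m-j-i-r,i)=0$. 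This establishes the final line of \eqref{6.5}.

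The main obstacle I expect is purely bookkeeping: confirming that the out-of-range terms in $\sum_{i=1}^m$ genuinely vanish, which reduces to the auxiliary fact that $g_r(n,m,j)=0$ whenever $j>m$ (provable by the same induction on $n$, since an invalid largest-part subscript cannot arise), and checking that no admissible second-largest subscript is lost in passing from the honest range to $1\le i\le m$. Everything else, namely the weight factor $q^m$ and the displacement $m'=m-j-i-r$, is forced immediately by the definition of the weighted difference.
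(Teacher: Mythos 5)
Your proof is correct and follows essentially the same route as the paper: the boundary cases are immediate since a single part must be of the form $m_m$, and the recursive case comes from peeling off the largest part $m_j$, whose predecessor $(m')_i$ is forced by the weighted-difference condition to satisfy $m' = m-j-i-r$, then summing over the subscript $i$. Your additional bookkeeping (verifying via $r\ge -1$ that $m_j$ is a strict lexicographic maximum, and that out-of-range terms in $\sum_{i=1}^m$ vanish by the base cases) is care the paper leaves implicit, but it is the same argument.
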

\begin{proof}
The first three lines of \eqref{6.5} are immediate because the smallest part must be of the form $m_m$.

For the last line, we see that the part $m_j$ must have directly below it a part that produces a weighted difference of $r$. Thus is the subscript is $i$, the part must be
\[(m-j-i-r)_i\]
Because
\[m-j-(m-j-i-r)-i=r.\]
Thus summing over $i$ we obtain the fourth line of \eqref{6.5}.
\end{proof}
\begin{lemma}\label{lemma24}
For $r\ge0$,
\begin{align}\label{6.6}
	g_{2r-1}(2n,2m,2j-1)&=q^2g_{2r-1}(2n,2m-1,2j)\\
	\notag&=q^{3m-j+(4r+2)n^2-(8r+2)n+3r+1}\genfrac[]{0pt}{0}{m-(2r-1)n-j+r-1}{2n-2}_2\\
	\label{6.7}g_{2r-1}(2n-1,2m,2j)&=qg_{2r-1}(2n-1,2m-1,2j-1)\\
	\notag&=q^{3m-j+(4r+2)n^2-(12r+4)n+8r+2}\genfrac[]{0pt}{0}{m-(2r-1)n-j+2r-2}{2n-3}_2\\
	\label{6.8}g_{2r}(2n,2m,2j)&=qg_{2r}(2n,2m-1,2j-1)\\
	\notag&=q^{3m-j+(4r+4)n^2-(8r+6)n+3r+2}\genfrac[]{0pt}{0}{m-2rn-j+r-1}{2n-2}_2\\
	\label{6.9}g_{2r}(2n-1,2m,2j)&=qg_{2r}(2n-1,2m-1,2j-1)\\
	\notag&=q^{3m-j+(4r+4)n^2-(12r+10)n+8r+6}\genfrac[]{0pt}{0}{m-2rn-j+2r-1}{2n-3}_2.
\end{align}
All instances of $g_r(n,m,j)$ apart from those listed in \eqref{6.5}-\eqref{6.9} are identically zero.
\end{lemma}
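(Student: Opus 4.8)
The plan is to prove all four closed forms in \eqref{6.6}--\eqref{6.9} simultaneously, by induction on the total number of parts $N$, with the recurrence \eqref{6.5} of Lemma~\ref{lemma23} as the engine. This follows exactly the template of Lemma~\ref{lemma14} (and of the G\"ollnitz--Gordon lemma), where explicit $q$-binomial formulas were checked against a defining recurrence via the standard recurrences \eqref{4.14} and \eqref{4.15}. The base case $N=1$ is the initial data in \eqref{6.5}, namely $g_r(1,m,j)=q^m$ when $m=j$ and $0$ otherwise; one then verifies directly that a single application of \eqref{6.5} (the $N=2$ step) reproduces \eqref{6.6} and \eqref{6.8} with $n=1$, because the inner sum collapses to the single index $i=m-j-r$ and the Gaussian binomial $\genfrac[]{0pt}{0}{\,\cdot\,}{0}_2$ equals $1$.

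Before the main induction I would dispose of the support claim (``all other instances vanish'') by a parity argument. Reading \eqref{6.5}, the first argument of $g_r$ on the right is $m-j-i-r$ and its third argument is the summation index $i$, so their sum is $m-j-r$; hence the parities of the two lower arguments at level $n-1$ are rigidly tied to the parity of $m-j-r$, and an induction on $N$ shows $g_r$ is supported precisely on the parity classes listed in \eqref{6.6}--\eqref{6.9} and vanishes elsewhere. I would fold the two-term ``shift relations'' (the first equalities, e.g.\ $g_{2r-1}(2n,2m,2j-1)=q^2g_{2r-1}(2n,2m-1,2j)$) into the induction hypothesis, since they are exactly what lets me rewrite both parities of the summation index in \eqref{6.5} in terms of a single closed form.

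The heart of the argument is the inductive step. To obtain \eqref{6.8} at level $2n$, say, I substitute \eqref{6.5} to get $g_{2r}(2n,2m,2j)=q^{2m}\sum_{i\ge1}g_{2r}(2n-1,2m-2j-i-2r,i)$, split the sum according to the parity of $i$, and invoke the level-$(2n-1)$ formula \eqref{6.9} together with its shift partner to replace each summand by an explicit power of $q$ times a $q^2$-binomial coefficient. The two parity pieces are merged using \eqref{4.14} or \eqref{4.15}, leaving a single sum over the halved index of the shape $\sum_a q^{(\text{quadratic in }a)}\genfrac[]{0pt}{0}{\cdot}{\cdot}_2\genfrac[]{0pt}{0}{\cdot}{\cdot}_2$. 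This sum is evaluated by the $q$-Chu--Vandermonde summation (the $q^2$-analogue of Lemma~\ref{lemma9}), producing the single Gaussian binomial of \eqref{6.8}; the other three cases are identical in spirit. Once each primary closed form is in hand, the corresponding shift relation at level $N$ drops out by comparing two closed forms that share the same $q^2$-binomial factor and differ only by an explicit monomial in $q$, which extends the hypothesis to level $N$.

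The main obstacle I anticipate is precisely this terminal summation over the halved index: after substituting the inductive formulas one is left with a sum of products of two Gaussian binomials weighted by a power of $q$ that is quadratic in the summation variable, and matching it to the $q$-Chu--Vandermonde form requires care with parameters, namely the residue $r$, the parities of $n$ and of the largest part, and the shifts coming from the subscript $j$. Keeping the $q$-exponents straight---the long expressions linear and quadratic in $n$ displayed in \eqref{6.6}--\eqref{6.9}---so that they emerge correctly after the summation is the one genuinely laborious part; everything else is a mechanical, if lengthy, deployment of \eqref{4.14}, \eqref{4.15}, and the $q$-binomial theorem, exactly as in Lemmas~\ref{lemma14} and~\ref{lemma9}.
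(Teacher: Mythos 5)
Your architecture coincides with the paper's: induction on the number of parts driven by the recurrence \eqref{6.5}, a parity argument for the vanishing claim, splitting the inner sum over the subscript $i$ by parity, substituting the level-$(2n-1)$ closed forms together with their shift partners, and finishing with one terminal summation. The gap is that you have misidentified that terminal summation, and the step as you describe it would not go through. The recurrence \eqref{6.5} contributes no $q$-binomial factor of its own, and each substituted closed form contributes exactly \emph{one} $q^2$-binomial whose $q$-exponent is \emph{linear} in the summation index (the exponents in \eqref{6.6}--\eqref{6.9} are linear in $m$ and $j$, and these get replaced by linear functions of $i$). Hence a sum of \emph{products of two} Gaussian binomials with a quadratic exponent never appears, and there is no $q$-Chu--Vandermonde structure to find; Chu--Vandermonde (Lemma~\ref{lemma9}) is the engine of the Schur analysis in Section 4, which you appear to have pattern-matched. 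Likewise, the two parity classes are not merged by \eqref{4.14} or \eqref{4.15}: the even-$i$ and odd-$i$ summands are the same expression evaluated at consecutive integer arguments, so they simply interleave into a single sum.

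Concretely, in the case \eqref{6.6} the substitution yields, up to an explicit prefactor,
\begin{equation*}
\sum_{i=1}^{2m}q^{2m-2i}\,\genfrac[]{0pt}{0}{m-i-j+r-1-(2r-1)n}{2n-3}_2,
\end{equation*}
a single-binomial sum with exponent linear in $i$. This is evaluated by the $q$-analogue of the column (hockey-stick) summation, $\sum_{t=m}^{T}q^{2(t-m)}\genfrac[]{0pt}{0}{t}{m}_2=\genfrac[]{0pt}{0}{T+1}{m+1}_2$, the identity the paper cites as \cite[p.~37, eq.~(3.3.9)]{9} with $q\to q^2$; it is exactly what raises the lower entry $2n-3$ to the $2n-2$ appearing in \eqref{6.6}. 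The structural reason a summation identity is needed at all---whereas Lemma~\ref{lemma14} and the G\"ollnitz--Gordon lemma got by with the Pascal recurrences \eqref{4.14}, \eqref{4.15}---is that \eqref{6.5} contains an internal sum over subscripts rather than a bounded number of terms. Your plan is repairable by replacing the Chu--Vandermonde step with this simpler summation; as written, though, the step you call the heart of the argument describes a computation that does not arise.
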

\begin{proof}
Let us use $\gamma_r(n,m,j)$ for the right hand sides of \eqref{6.5}-\eqref{6.9}. It is clear that the recurrence and initial conditions in Lemma \ref{lemma24} uniquely define the $g_r(n,m,j)$ polynomials.

It is easy to check directly that the two top lines of \eqref{6.5} hold for $\gamma_r(n,m,j)$. It is also a simple matter to verify that all of the instances of $\gamma_r(n,m,j)$ that should be identicaly zero are indeed that via the given recurrence.

The heart of the proof is to show that each of 8 instances for $\gamma_r(n,m,j)$ required by \eqref{6.6}-\eqref{6.9} actually fulfill the recurrence. Each one is very similar to the others, so we will do \eqref{6.6} for $\gamma_{2r-1}(2,2m,2j)$. First the case $n=1$, which asserts
\[\gamma_{2r-1}(2,2m,2j-1)=q^{3m-j-r+1}.\]
This is true because if there are just two parts where the larger lexicographically is $(2m)_{2j-1}$ and the smaller is some $M_M$ with the requirement that
\[(((2m)_{2j-1}-MM))=2r-1,\]
then $2m-(2j-1)-M-M=2r-1$. So $M=m-j-r+1$, and $2m+M=3m-j-r+1$, as required.

Next we must treat the recurrence step. We evaluate the last line in \eqref{6.5} for the $\gamma$’s,
\begin{align*}
	&q^{2m}\sum_{i=1}^{2m}g_{2r-1}(2n-1,2m-(2j-1)-i-(2r-1),i)\\
	&=q^{2m}\left(\sum_{i=1}^m g_{2r-1}(2n-1,2m-2j-2i-2r+2,2i)\right.\\
	&\left.+\sum_{i=1}^mg_{2r-1}(2n-1,(2m-2j-2i-2r+4)-1,2i-1)\right)\\
	&=q^{2m}\sum_{i=1}^m\left(q^{3(m-j-i-r+1)-i+(4r+2)n^2-(12r+4)n+8r+2}\right.\\
	&\left.\times\genfrac[]{0pt}{0}{(m-j-i-r+1)-(2r-1)n-i+2r-2}{2n-3}_2\right.\\
	&\left.+q^{3(m-j-i-r+2)-i+(4r+2)n^2-(12r+4)n+8r+1}\right.\\
	&\left.\times \genfrac[]{0pt}{0}{(m-j-i-r+2)-(2r-1)n-i+2r-2}{2n-3}_2\right)\\
	&=q^{3m-3j+5r+(4r+2)n^2-(12r+4)n+5}\sum_{i=1}^m \left(q^{2m-4i}\genfrac[]{0pt}{0}{m-j+r-1-(2r-1)n-2i}{2n-3}_2\right.\\
	&\left.+q^{2m-4i+2}\genfrac[]{0pt}{0}{m-j+r-1-(2r-1)n-(2i-1)}{2n-3}_2\right)\\
	&=q^{3m-3j+5r+(4r+2)n^2-(12r+4)n+5}\sum_{i=1}^{2m}q^{2m-2i} \genfrac[]{0pt}{0}{m-i-j+r-1-(2r-1)n}{2n-3}_2\\
	&=q^{3m-3j+5r+(4r+2)n^2-(12r+4)n+8r+5}\times q^{2(j-r+1+(2r-1)n+2n-3)}\\
    &\times\sum_{i=1}^{2m}q^{2(m-i-j+r-1-(2r-1)n-2n+3)} \genfrac[]{0pt}{0}{m-i-j+r-1-(2r-1)n}{2n-3}_2\\
	&=q^{3m-j+(4r+2)n^2-(8r+2)n+3r+1}\times \genfrac[]{0pt}{0}{m-1-j+r-(2r-1)n}{2n-2}_2\\
	&~\text{(by \cite[p. 37, eq. (3.3.9)]{9})}\\
	&=\gamma_{2r-1}(2n,2m,2j-1).
\end{align*}
The other seven recurrences and initial conditions are proved in exactly this way.
\end{proof}
We are now in a position to prove Theorem \ref{theorem20}.
\begin{proof}
By the definition of $g_r(n,m,j)$ we see that $\beta_r(n,q)=\sum_{m\ge1}\sum_{j=1}^m g_r(n,m,j)$.

There are four cases to treat: $n$ even or odd and $r$ even or odd. The cases are entirely similar, so we consider only $r$ odd and $n$ odd.
\begin{align*}
	\beta_{2r-1}(2n-1,q)&=\sum_{m\ge1}\sum_{j=1}^m g_{2r-1}(2n-1,m,j)\\
	&=\sum_{m\ge1}\sum_{j=1}^m(g_{2r-1}(2n-1,2m,2j)+g_{2r-1}(2n-1,2m-1,2j-1))\\
	&=(1+q)\sum_{m\ge1}\sum_{j=1}^mq^{3m-j+(4r+2)n^2-(12r+4)n+8r+1} \times\genfrac[]{0pt}{0}{m-(2r-1)n-j+2r-2}{2n-3}_2\\
	&=(1+q)\sum_{m\ge0}\sum_{j\ge1}q^{3(m+(2r-1)n+j-2r+2+2n-3)} \times q^{-j+(4r+2)n^2-(12r+4)n+8r+1}\genfrac[]{0pt}{0}{m+2n-3}{2n-3}_2\\
	&=(1+q)\frac{q^2}{1-q^2}\frac{1}{(q^3;q^2)_{2n-2}}\times q^{(4r+2)n^2-(6r+1)n+2r-2}~\text{by \cite[p. 36, eq. (3.3.8)]{9}}\\
	&=\frac{q^{(2n+1)^2+(2r-1)\binom{2n-1}{2}}}{(q;q^2)_{2n-1}},
\end{align*}
as desired. The other three cases, as noted previously, are perfectly analogous to this case.
\end{proof}
\begin{corollary}\label{corollary25}
For $r\ge-1$, the generating function for partitions with $n$ copies of $n$ in which the weighted difference between parts is at least $r$ is given by
\[\sum_{m\ge0}\frac{q^{m^2+4\binom{m}{2}}}{(q;q)_m(q;q^2)_m}.\]
\end{corollary}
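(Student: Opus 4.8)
The plan is to treat this as the ``$n$ copies of $n$'' analogue of Corollary~\ref{corollary2}: Lemma~\ref{lemma22} plays exactly the role that Theorem~\ref{theorem1} played for ordinary partitions, giving a separable decomposition of each admissible partition into a basis piece and an arbitrary ``fill.'' Fix $m\geq 1$ and consider the partitions with $n$ copies of $n$ that have exactly $m$ parts and whose successive weighted differences (in ascending lexicographic order) are all $\geq r$. By Lemma~\ref{lemma22} each such $\pi$ corresponds uniquely to a pair consisting of a basis partition $\bar\pi$ --- one with $m$ parts, smallest part of the form $i_i$, and \emph{all} successive weighted differences equal to $r$ --- together with an ordinary partition $0\leq \psi_1\leq \psi_2\leq\cdots\leq \psi_m$ into $m$ nonnegative parts.

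First I would check that this correspondence is weight-preserving in the graded sense, so that the two generating functions simply multiply. The weight $v$ of a partition with $n$ copies of $n$ is the sum of the first coordinates of its parts. Lemma~\ref{lemma22} supplies the explicit shifts $m_i=(\bar m+\psi_1)_i$, $n_j=(\bar n+\psi_2)_j$, and so on, so the first coordinate of the $k$-th smallest part of $\pi$ exceeds that of $\bar\pi$ by exactly $\psi_k$. Summing over the $m$ parts gives $v(\pi)=v(\bar\pi)+(\psi_1+\cdots+\psi_m)$, hence $q^{v(\pi)}=q^{v(\bar\pi)}\cdot q^{\psi_1+\cdots+\psi_m}$, and the weight genuinely splits across the two factors.

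Next I would identify the two factor generating functions. The basis partitions $\bar\pi$ with $m$ parts are precisely those enumerated by $\beta_r(m,q)$, which Theorem~\ref{theorem20} evaluates as $q^{m^2+r\binom{m}{2}}/(q;q^2)_m$. The fill partitions $0\leq\psi_1\leq\cdots\leq\psi_m$ are ordinary partitions into at most $m$ parts, with the familiar generating function $1/(q;q)_m$. Combining these with the weight-splitting just established, the generating function for admissible partitions with exactly $m$ parts is $\beta_r(m,q)/(q;q)_m=q^{m^2+r\binom{m}{2}}/\bigl((q;q)_m(q;q^2)_m\bigr)$. Summing over all $m\geq 0$ (the empty partition contributing the $m=0$ term, which equals $1$) yields the asserted series.

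The argument is almost entirely bookkeeping once Theorem~\ref{theorem20} is in hand; the only point requiring genuine care is the weight-additivity of the second paragraph, and even that is forced immediately by the explicit part-by-part formulas of Lemma~\ref{lemma22}. I would note, finally, that to be consistent with Theorem~\ref{theorem20} the exponent in the displayed series should read $q^{m^2+r\binom{m}{2}}$; one then checks that the specializations $r=1$, $r=0$, and $r=-1$ recover the series in \eqref{6.2}, \eqref{6.3}, and \eqref{6.4} respectively, which is what feeds Theorems~\ref{theorem18}, \ref{theorem19}, and~\ref{theorem21}.
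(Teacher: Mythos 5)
Your proof is correct and takes essentially the same route as the paper's: both invoke Lemma~\ref{lemma22} for the separable decomposition into a basis partition (all weighted differences equal to $r$, smallest part $i_i$) plus an ordinary fill partition, apply Theorem~\ref{theorem20} to get $\beta_r(m,q)/(q;q)_m=q^{m^2+r\binom{m}{2}}/\bigl((q;q)_m(q;q^2)_m\bigr)$ as the generating function for exactly $m$ parts, and sum over $m$ (your explicit check of weight-additivity is a detail the paper leaves implicit). You are also right that the exponent $m^2+4\binom{m}{2}$ in the statement is a typo for $m^2+r\binom{m}{2}$, as both the paper's proof and the specializations $r=1,0,-1$ feeding Theorems~\ref{theorem18}, \ref{theorem19}, and~\ref{theorem21} confirm.
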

\begin{proof}
By Lemma \ref{lemma22} and Theorem \ref{theorem20}, we see that the generating function for all partitions with $n$ copies of $n$ and having exactly $m$ parts is given by
\[\frac{\beta_r(m,q)}{(q;q)_m}=\frac{q^{m^2+r\binom{m}{2}}}{(q;q)_m(q;q^2)_m},\]
and summing over all $m$, we obtain the result.
\end{proof}
Proof of Theorem \ref{theorem18}:
\begin{proof}
This follows directly from Corollary \ref{corollary25} with $r=1$ and the identity \eqref{6.2}.
\end{proof}
Proof of Theorem \ref{theorem19}:
\begin{proof}
This follows directly from Corollary \ref{corollary25} with $r=0$ and the identity \eqref{6.3}.
\end{proof}
Proof of Theorem \ref{theorem20}:
\begin{proof}
This follows directly from Corollary \ref{corollary25} with $r=-1$ and the identity \eqref{6.4}.
\end{proof}
In addition, we can now interpret a couple of Ramanujan’s mock theta functions with partitions with $n$ copies of $n$.
\begin{theorem}\label{theorem26}
The tenth order mock theta function \cite[p. 149, eq. (8.1.2)]{14}
\[\psi_{10}(q):=\sum_{n=0}^\infty\frac{q^{\binom{n+1}{2}}}{(q;q^2)_n},\]
is the generating function for partitions with $n$ copies of $n$ where the weighted difference between parts is $-1$, and the smallest part is of the form $j_j$
\end{theorem}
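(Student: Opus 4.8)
The plan is to identify the summand of $\psi_{10}(q)$ directly as the generating function $\beta_{-1}(m,q)$ furnished by Theorem~\ref{theorem20}, reducing the theorem to a one-line matching together with a summation over the number of parts. Unlike Corollary~\ref{corollary25}, which concerns weighted differences \emph{at least} $r$ and therefore invokes Lemma~\ref{lemma22} to insert a factor $1/(q;q)_m$, here the weighted differences are required to equal $-1$ \emph{exactly}, so no such factor enters and we sum the $\beta_{-1}(m,q)$ unweighted.

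First I would observe that, by the definition of $\beta_r(m,q)$, the quantity $\beta_{-1}(m,q)$ is precisely the generating function for partitions with $n$ copies of $n$ having exactly $m$ parts, in which the weighted difference between successive parts (in ascending lexicographic order) equals $-1$ and the smallest part has the form $j_j$. These are exactly the partitions described in the statement, sorted by their number of parts $m$; the generating function for all of them is therefore $\sum_{m\ge0}\beta_{-1}(m,q)$, the $m=0$ term contributing the empty partition.

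Next I would apply Theorem~\ref{theorem20} at $r=-1$, which gives
\[
\beta_{-1}(m,q)=\frac{q^{m^2-\binom{m}{2}}}{(q;q^2)_m},
\]
and then simplify the exponent by the elementary identity
\[
m^2-\binom{m}{2}=m^2-\frac{m(m-1)}{2}=\frac{m^2+m}{2}=\binom{m+1}{2}.
\]
Summing over $m\ge0$ then produces exactly $\sum_{m\ge0}q^{\binom{m+1}{2}}/(q;q^2)_m$, the defining series of $\psi_{10}(q)$.

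There is no substantive obstacle: the value $r=-1$ lies in the admissible range (it is the case already exploited for Theorem~\ref{theorem21}), so Theorem~\ref{theorem20} applies verbatim, and the remaining work is the exponent identity above. The only point demanding care is confirming that the partition class named in the theorem---weighted difference exactly $-1$, smallest part $j_j$---coincides term-by-term with the class packaged in $\beta_{-1}$, which is immediate from the definitions.
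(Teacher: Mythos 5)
Your proposal is correct and follows essentially the same route as the paper: the paper's proof likewise invokes the formula $\beta_r(m,q)=q^{m^2+r\binom{m}{2}}/(q;q^2)_m$ of Theorem~\ref{theorem20} at $r=-1$ (cited there, with a typo, as Theorem~\ref{theorem19}), identifies $\beta_{-1}(m,q)$ as the generating function for the partition class in question with exactly $m$ parts, and sums over $m$. Your added remarks---the exponent simplification $m^2-\binom{m}{2}=\binom{m+1}{2}$ and the observation that no factor $1/(q;q)_m$ is attached because the weighted differences equal $-1$ exactly---are exactly the points implicit in the paper's shorter argument.
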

\begin{proof}
We note by Theorem \ref{theorem19} that
\[\beta_{-1}(m,q)=\frac{q^{\binom{m+1}{2}}}{(q;q^2)_m}\]
and $\beta_{-1}(m,q)$ is the generating function for partitions with $n$ copies of $n$ where the weighted difference between parts is $-1$ and the smallest part is of the form $j_j$. Summing over all $m$, we obtain the result.
\end{proof}
\begin{theorem}\label{theorem27}
The third order mock theta function \cite[p. 5, eq. (2.1.3)]{14}
\[\psi_3(q):=\sum_{n=0}^\infty\frac{q^{n^2}}{(q;q^2)_n},\]
is the generating function for partitions with $n$ copies of $n$ where the weighted difference between parts is 0 and the smallest part is of the form $j_j$.
\end{theorem}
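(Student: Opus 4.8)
The plan is to read this off directly from Theorem~\ref{theorem20}, exactly as was done for $\psi_{10}$ in Theorem~\ref{theorem26}; the heavy lifting is already complete and no new obstacle remains. By the definition preceding Theorem~\ref{theorem20}, the quantity $\beta_r(m,q)$ is the generating function for partitions with $n$ copies of $n$ that have exactly $m$ parts, whose smallest part has the form $i_i$, and in which every weighted difference between successive parts (in ascending lexicographic order) equals $r$. The target statement concerns weighted difference $0$ and smallest part $j_j$, so I would set $r=0$ throughout.

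First I would specialize equation~\eqref{6.1} at $r=0$, which gives immediately
\[
\beta_0(m,q)=\frac{q^{m^2}}{(q;q^2)_m},
\]
since the term $r\binom{m}{2}$ vanishes. I would then observe that, by its very definition, $\beta_0(m,q)$ enumerates precisely the desired partitions with $n$ copies of $n$ (weighted difference $0$, smallest part $j_j$) that have exactly $m$ parts; the phrase ``weighted difference between parts is $0$'' in the theorem matches ``weighted difference between successive parts is exactly $r$'' in the definition once $r=0$ is fixed.

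Finally, summing over all possible numbers of parts $m\ge 0$ collects every such partition exactly once, yielding
\[
\sum_{m\ge 0}\beta_0(m,q)=\sum_{m\ge 0}\frac{q^{m^2}}{(q;q^2)_m}=\psi_3(q),
\]
which is the claimed generating function. The only point requiring a moment's care is the bookkeeping of the empty partition $m=0$ (contributing the constant term $1$, matching the $n=0$ summand of $\psi_3$), but this is routine. Thus the main difficulty was entirely absorbed into Theorem~\ref{theorem20}, and Theorem~\ref{theorem27} follows as an immediate specialization with $r=0$, in complete parallel to the $r=-1$ case of Theorem~\ref{theorem26}.
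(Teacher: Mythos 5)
Your proposal is correct and matches the paper's own proof: Andrews likewise deduces Theorem~\ref{theorem27} by specializing the formula of Theorem~\ref{theorem20} at $r=0$ to get $\beta_0(m,q)=q^{m^2}/(q;q^2)_m$, invoking the definition of $\beta_0(m,q)$, and summing over the number of parts $m$, exactly as in the proof of Theorem~\ref{theorem26}.
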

\begin{proof}
The argument here is exactly that of the proof of Theorem \ref{theorem26} with the only change being that
\[\beta_0(m;q)=\frac{q^{m^2}}{(q;q^2)_m}.\]
\end{proof}
\begin{corollary}\label{corollary28}
Let $M_1(m)$ denote the number of ordinary partitions of $m$ in which the largest part is unique and every other part occurs exactly twice. Let $M_2(m)$ denote the number of partitions of $m$ with $N$ copies of $N$ where the weighted difference between successive parts is 0 and the smallest part is of the form $j_j$. Then
\[M_1(m)=M_2(m).\]
\end{corollary}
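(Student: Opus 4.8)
The plan is to reduce the asserted equality of counting functions to an identity of generating functions, one side of which is already supplied by Theorem~\ref{theorem27}. By that theorem the generating function for the partitions counted by $M_2(m)$ is exactly
$$
\psi_3(q)=\sum_{n\ge0}\frac{q^{n^2}}{(q;q^2)_n},
$$
where the $n=0$ term accounts for the empty partition, so that $M_2(0)=1$. It therefore suffices to show that the generating function $\sum_{m\ge0}M_1(m)q^m$ for the ordinary partitions in which the largest part is unique and every other part occurs exactly twice is also equal to $\psi_3(q)$.

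First I would compute $\sum_m M_1(m)q^m$ directly by sorting each such partition according to its (unique) largest part $\ell$. That part contributes a single factor $q^\ell$, while every smaller size $j$ with $1\le j\le\ell-1$ is either absent or occurs exactly twice, contributing the factor $1+q^{2j}$. Allowing the empty partition (the $\ell=0$ term, contributing $1$), this yields
$$
\sum_{m\ge0}M_1(m)q^m=1+\sum_{\ell\ge1}q^\ell(-q^2;q^2)_{\ell-1}.
$$

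The heart of the argument, and the step I expect to be the main obstacle, is then the $q$-series identity
$$
1+\sum_{\ell\ge1}q^\ell(-q^2;q^2)_{\ell-1}=\sum_{n\ge0}\frac{q^{n^2}}{(q;q^2)_n}.
$$
I would attack this by expanding the finite product with the terminating $q$-binomial theorem \cite[p.~36]{9} in base $q^2$,
$$
(-q^2;q^2)_{\ell-1}=\sum_{i\ge0}q^{i^2+i}\genfrac[]{0pt}{0}{\ell-1}{i}_2,
$$
and then interchanging the order of the sums over $\ell$ and $i$. Writing $\ell=i+1+t$ with $t\ge0$ and using $\genfrac[]{0pt}{0}{i+t}{i}_2=\genfrac[]{0pt}{0}{i+t}{t}_2$, the inner sum becomes $q^{i+1}\sum_{t\ge0}q^{t}\genfrac[]{0pt}{0}{i+t}{t}_2$, which the $q$-binomial series \cite[p.~36]{9} sums to $q^{i+1}/(q;q^2)_{i+1}$. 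Collecting the surviving powers of $q$ gives the term $q^{(i+1)^2}/(q;q^2)_{i+1}$, and reindexing $n=i+1$ produces $\sum_{n\ge1}q^{n^2}/(q;q^2)_n$; restoring the $\ell=0$ term recovers the full series $\sum_{n\ge0}q^{n^2}/(q;q^2)_n=\psi_3(q)$.

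With both generating functions equal to $\psi_3(q)$, comparing coefficients of $q^m$ gives $M_1(m)=M_2(m)$ for all $m\ge0$, which is the corollary. An alternative to the third paragraph would be to construct a direct bijection between the two families of partitions, but the $q$-series route above is the most economical, since it reuses only the standard $q$-binomial tools already invoked repeatedly in the paper.
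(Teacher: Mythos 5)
Your proposal is correct, and its route is genuinely different from the paper's. The paper disposes of the $M_1$ side in two lines: it cites N.~Fine's observation that $\psi_3(q)$ is the generating function for partitions into odd parts without gaps, and then notes that conjugation carries those partitions bijectively onto the partitions counted by $M_1(m)$; the $M_2$ side is Theorem~\ref{theorem27}, exactly as in your argument. You instead compute the $M_1$ generating function directly as
\[
1+\sum_{\ell\ge1}q^{\ell}(-q^2;q^2)_{\ell-1},
\]
and then prove analytically, via the terminating $q$-binomial theorem in base $q^2$ followed by the $q$-binomial series, that this equals $\psi_3(q)$; I checked these manipulations (the shift $\ell=i+1+t$, the summation of $\sum_{t\ge0}q^t\genfrac[]{0pt}{0}{i+t}{t}_2$ to $1/(q;q^2)_{i+1}$, and the recombination of exponents into $q^{(i+1)^2}$) and they are all valid, and the two series agree term by term at low order. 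What the paper's proof buys is brevity, at the cost of leaning on a cited result of Fine plus a conjugation step that is asserted rather than carried out; what your proof buys is self-containedness --- it uses only the $q$-binomial machinery already deployed repeatedly in the paper --- and, as a by-product, it furnishes an independent, purely analytic proof of the conjugated form of Fine's observation, namely the displayed identity. Either way, the conclusion $M_1(m)=M_2(m)$ follows by comparing coefficients, so your proof is complete as it stands.
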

\begin{remark}
We shall show that $\psi_3(q)$ is the generating function for both $M_1(m)$ and $M_2(m)$. As an example, consider $m=9$. $M_1(9)=4$, the partitions in question being $9,7+1+1,5+2+2,3+2+2+1+1.~M_2(9)=4$, the partitions in question being $9_9,8_6+1_1,7_3+2_2,5_1+3_1+1_1$.
\end{remark}
\begin{proof}
N. Fine \cite[p 57]{15} has observed that $\psi_3(q)$ is the generating function for partitions into odd parts without gaps. The conjugates of these partitions are the partitions enumerated by $M_1(m)$.

The result now follows from Corollary \ref{corollary28} which shows that $\psi_3(q)$ is also the generating function for $M_2(m)$.
\end{proof}
\section{Overpartitions}\label{section7}
Overpartitions were introduced by Corteel and Lovejoy \cite{14} as the natural combinatorial object counted by the coefficients of
\[\prod_{n=1}^\infty\frac{1+q^n}{1-q^n}\]
Namely these are the partitions of $n$ wherein each part size can have (or not) one summand overlined. Thus the 8 overpartitions of 3 are $3,\bar3,2+1,\bar2+1,2+\bar1,\bar2+\bar1,1+1+1$, and $1+1+\bar1$.

Among the most appealing theorems on overpartitions is Lovejoy’s extension of Schur’s theorem to overpartitions.
\begin{theorem}\label{theorem29}
The number of overpartitions of $n$ in which no part is divisible by 3 equals the number of overpartitions of $n$ wherein adjacent parts differ by at least 3 if the smaller is overlined or divisible by 3 and by at least 6 if the smaller is overlined and divisible by 3.
\end{theorem}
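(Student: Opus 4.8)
The plan is to prove Theorem~\ref{theorem29} by the SIP strategy used for Schur's theorem in Section~\ref{section4}, transported to the overpartition setting. First I would record the series--product identity that the theorem secretly asserts. The generating function for overpartitions into non-multiples of $3$ is
$$\prod_{m\not\equiv 0\,(3)}\frac{1+q^m}{1-q^m}=\frac{(-q;q^3)_\infty(-q^2;q^3)_\infty}{(q;q^3)_\infty(q^2;q^3)_\infty},$$
and I would refine it by marking the two residue classes with $u$ and $v$ and the overlining by numerator versus denominator factors, so that the target becomes $\tfrac{(-uq;q^3)_\infty(-vq;q^3)_\infty}{(uq;q^3)_\infty(vq;q^3)_\infty}$. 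Here the numerator produces overlined residue-$1$ and residue-$2$ parts, which behave like the distinct parts of Theorem~\ref{theorem11}, while the denominator produces freely repeating non-overlined parts. The whole proof then reduces to showing that this product equals $\sum_{n\ge0}b_{\mathcal P}(n;q)/(q^3;q^3)_n$, where $b_{\mathcal P}(n;q)$ is the basis generating function for the difference-condition overpartitions.

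The first genuine step is to extend Theorem~\ref{theorem1} and Corollary~\ref{corollary2} to overpartitions. I would let $\mathcal P$ be the class of overpartitions obeying the stated difference conditions and show it is separable with modulus $3$: every such overpartition factors uniquely as a basis overpartition in $\mathcal B$ plus a partition into multiples of $3$ added to the parts, the overline data and residues being carried by $\mathcal B$ and the added multiples of $3$ being non-overlined. The ``cannot get too far apart'' argument transfers because the nominal gaps $3$ and $6$ are multiples of $3$, so subtracting $3$ from a part that sits too high above its predecessor, while preserving its residue and overline status, yields another admissible overpartition and contradicts uniqueness. The point needing real care is the free repetition of non-overlined non-multiples of $3$: such parts may be equal, so $\mathcal B$ is not a class of essentially distinct parts as in Schur, and I must verify that the decomposition stays unique when these repetitions are split between $\mathcal B$ and the multiples-of-$3$ fill-in.

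With the framework in place, the core computation is the overpartition analog of Theorem~\ref{theorem7}: an explicit Gaussian-polynomial formula for the basis generating function with $n$ parts and largest part $h$, refined by $u$, $v$, and an overline marker. Exactly as in \eqref{3.6}, \eqref{4.9} and \eqref{5.9}, I would read a recurrence in $(n,h)$ off the difference conditions, now splitting into cases by the residue of $h$ modulo $3$ \emph{and} whether the largest part is overlined, which roughly doubles the case count relative to Section~\ref{section4}. I would then propose a closed form in base $q^3$ and confirm it by induction, each step collapsing to one or two applications of the $q$-binomial recurrences \eqref{4.14}--\eqref{4.15}. Summing $\sum_n b_{\mathcal P}(n)/(q^3;q^3)_n$ to the product would then follow overpartition versions of Lemmas~\ref{lemma8}, \ref{lemma9} and \ref{lemma10}: combine adjacent basis terms, apply the $q$-Chu--Vandermonde summation, and finish with the $q$-binomial theorem.

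I expect the main obstacle to be this basis computation. The overline degree of freedom enlarges the case analysis and makes both the guessed closed form and its inductive verification substantially heavier than in Theorem~\ref{theorem7}, and keeping the $u$-, $v$-, and overline-bookkeeping consistent through the final Chu--Vandermonde collapse is where errors are most likely to enter. A useful check, and possibly a shortcut, is that setting the overline variable to $0$ must recover precisely the refined Schur generating function of Theorem~\ref{theorem11}, since overpartitions with no overlined part are ordinary Schur partitions; this specialization can be used both to calibrate the closed form and to localize any miscalculation.
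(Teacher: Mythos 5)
First, a point of comparison that matters for this review: the paper never proves Theorem~\ref{theorem29} at all. It is quoted as Lovejoy's theorem, and immediately after \eqref{7.1} the paper concedes that it is ``completely unclear'' how the natural series--product identity fits with it; what Section~\ref{section7} actually proves is the different result Theorem~\ref{theorem30}, about overpartitions with $n$ copies of $n$. So there is no paper proof to match, and your proposal has to stand on its own. It does not, for two reasons, one fatal and one structural.

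The fatal problem is that your target identity is false: the statistic ``number of overlined parts'' is \emph{not} preserved between the two sides of Lovejoy's theorem, so overlining cannot be encoded by numerator-versus-denominator factors in $\frac{(-uq;q^3)_\infty(-vq^2;q^3)_\infty}{(uq;q^3)_\infty(vq^2;q^3)_\infty}$ (note also your $vq$ should be $vq^2$). Concretely, at $n=3$ the difference-condition class contains three overpartitions with no overlined part, namely $3$, $2+1$, and $1+1+1$ (the repeated $1$'s are legal because the smaller part is neither overlined nor divisible by $3$), while there are only two ordinary partitions of $3$ into non-multiples of $3$, namely $2+1$ and $1+1+1$. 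So the overline-degree-zero coefficients already disagree, and no basis computation, however correct, can rescue a proof aimed at that refinement. The same example destroys your proposed calibration check: an overpartition with no overlined parts satisfying Lovejoy's conditions is emphatically not a Schur partition (non-multiples of $3$ may repeat, and gaps are forced only above multiples of $3$), so setting the overline variable to $0$ does not recover Theorem~\ref{theorem11}; used as a ``check,'' it would actively mislead you. The refinement that genuinely underlies Lovejoy's theorem (via Alladi--Gordon weighted words) pairs the two sides through a subtler dictionary, and finding the statistics that the SIP basis must track is precisely the hard part of the problem. The structural problem is that, even granting a correct target, the heart of the proof --- the closed form for the basis generating function (the analog of Theorem~\ref{theorem7}) and the summation lemmas collapsing $\sum_n b_{\mathcal{P}}(n)/(q^3;q^3)_n$ to the product (the analogs of Lemmas~\ref{lemma8}--\ref{lemma10}) --- is only promised, never produced. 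What is sound in your plan is the separability claim itself: since the gaps $0$, $3$, $6$ are multiples of $3$ and residues and overline status are unchanged by adding multiples of $3$, the induction proving Theorem~\ref{theorem1} does transfer, with the threshold depending on the status of the smaller part rather than the residue of the larger one, and with the repetition caveat you rightly flag. But separability is the easy step; the refinement and the basis formula are where the theorem lives, and here one is wrong and the other is absent.
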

Now the generating function for overpartitions in which no part is divisible by 3 is:
\[\prod_{\substack{n=1\\3\not\mid n}}^\infty\frac{1+q^n}{1-q^n},\]
and this function appears in the sixth identity in L. J. Slater’s list \cite[p. 152, eq. (6) corrected]{23}.
\begin{equation}\label{7.1}
\prod_{\substack{n=1\\3\not\mid n}}^\infty\frac{1+q^n}{1-q^n}=\sum_{n\ge0}\frac{(-1;q)_nq^{n^2}}{(q;q)_n(q;q^2)_n}.
\end{equation}
It is completely unclear how exactly the right-hand side of \eqref{7.1} fits in with Lovejoy’s theorem. It turns out that \eqref{7.1} naturally fits into overpartitions with $n$ copies of $n$. Here the same principle as before applies where now only one instance of $n_j~(1\le j\le n)$ may be overlined in any partition. The generating function is:
\[\prod_{n=1}^\infty\frac{(1+q^n)^n}{(1-q^n)^n}=1+2q+6q^2+16q^3+38q^4+\cdots\]
Thus the 16 overpartitions of 3 using $n$ copies of $n$ are $3_3,\bar3_3, 3_2, \bar3_2, 3_1,\bar3_1,2_2+1_1,\bar2_2+1_1,2_2+\bar1_1,\bar2_2+\bar1_1,1_1+1_1+1_1+1_1,1_1+1_1+1_1+\bar1_1$.
\begin{theorem}\label{theorem30}
Let $J(m)$ denote the number of overpartitions of $m$ whose parts are not divisible by 3. Let $L(m)$ denote the number of overpartitions with $n$ copies of $n$ in which (i) the weighted differences between adjacent parts is $\ge0$ (ii). If the weighted difference of two or more successive parts is zero, then only the smallest part in the sequence (ignoring the subscript) can be overlined. Then for $m\ge1$,
\[J(m)=L(m).\]
\end{theorem}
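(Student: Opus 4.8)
The plan is to prove Theorem~\ref{theorem30} by showing that both $J(m)$ and $L(m)$ are generated by the series on the right-hand side of \eqref{7.1}, namely $\sum_{n\ge0}\frac{(-1;q)_nq^{n^2}}{(q;q)_n(q;q^2)_n}$. For $J(m)$ this is immediate, since the generating function for overpartitions with no part divisible by $3$ is $\prod_{3\nmid n}\frac{1+q^n}{1-q^n}$, which is exactly the product side of \eqref{7.1}. All of the work therefore lies in identifying the generating function for the overpartition class that defines $L(m)$.

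First I would apply Lemma~\ref{lemma22} with $r=0$ to the underlying (non-overlined) partitions with $n$ copies of $n$ satisfying (i): each such partition with $m$ parts decomposes uniquely as a basis partition $\bar\pi$ (all successive weighted differences equal to $0$, smallest part $i_i$) together with a non-decreasing sequence $0\le\psi_1\le\cdots\le\psi_m$. The crucial observation is that, because $r=0$, the weighted difference between the $k$-th and $(k+1)$-th parts equals $\psi_{k+1}-\psi_k$; hence a maximal run of parts with pairwise weighted difference $0$ is precisely a maximal block of equal $\psi$-values. Within such a run consecutive sizes $m$ and $n$ satisfy $n=m+i+j$ with $i,j\ge1$, so $n>m$ and the sizes strictly increase; thus ``the smallest part in the sequence (ignoring the subscript)'' is unambiguously the first part of the block. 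Consequently condition (ii) says exactly that the overlineable parts are the first part of each $\psi$-block, so the number of overlineable parts equals the number of distinct values among $\psi_1,\dots,\psi_m$.

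Next I would assemble the generating function. Since an overline adds no weight and each admissible underlying partition yields $2^{\,\#\{\text{distinct }\psi_k\}}$ overpartitions, and since the total size splits as (basis size) $+\;\psi_1+\cdots+\psi_m$ with the two contributions independent, the generating function for $L$ factors, for each fixed $m$, as
\[
\beta_0(m,q)\cdot\sum_{0\le\psi_1\le\cdots\le\psi_m}2^{\,\#\{\text{distinct }\psi_k\}}\,q^{\psi_1+\cdots+\psi_m},
\]
where $\beta_0(m,q)=\frac{q^{m^2}}{(q;q^2)_m}$ by Theorem~\ref{theorem20}. The heart of the proof is the evaluation of the second factor. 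Grouping a non-decreasing sequence into its $t$ blocks with multiplicities $\mu_1,\dots,\mu_t$ and writing the tail sums $M_l=\mu_l+\cdots+\mu_t$ (so $m=M_1>M_2>\cdots>M_t\ge1$), the value sum telescopes and the inner sum, with $z$ marking the number of blocks, becomes
\[
\frac{z}{1-q^m}\prod_{j=1}^{m-1}\frac{1+(z-1)q^j}{1-q^j};
\]
setting $z=2$ collapses this to $\frac{2(-q;q)_{m-1}}{(q;q)_m}=\frac{(-1;q)_m}{(q;q)_m}$. Multiplying by $\beta_0(m,q)$ and summing over $m$ then gives exactly $\sum_{m\ge0}\frac{(-1;q)_mq^{m^2}}{(q;q)_m(q;q^2)_m}$, which matches the generating function for $J(m)$, and the theorem follows.

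I expect the main obstacle to be the bookkeeping in the third paragraph: one must verify that the overlineable positions depend only on the $\psi$-block structure and not on the basis, that the weight genuinely factors as a product of a basis generating function and a pure $\psi$-sum, and that the distinct-value enumeration produces $(-1;q)_m$ rather than some nearby expression. The relation $n=m+i+j$ guaranteeing strictly increasing sizes inside a zero-weighted-difference run is what makes ``smallest part'' well defined, and it is the linchpin that lets condition (ii) translate cleanly into the factor $2^{\,\#\{\text{distinct }\psi_k\}}$.
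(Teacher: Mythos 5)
Your proposal is correct, and its overall skeleton is the paper's: both arguments reduce $J(m)$ to the product side of \eqref{7.1}, both invoke Lemma~\ref{lemma22} and Theorem~\ref{theorem20} with $r=0$ so that the generating function for $L$ factors as $\sum_{m}\beta_0(m,q)F_m(q)$ with $\beta_0(m,q)=q^{m^2}/(q;q^2)_m$, and both come down to showing $F_m(q)=(-1;q)_m/(q;q)_m$. The difference lies in how that last identification is made. The paper does it by combinatorial interpretation: the dissection \eqref{7.2} splits $(-1;q)_m/(q;q)_m$ into the generating function for overpartitions with exactly $m$ positive parts plus that for overpartitions with $m$ nonnegative parts (exactly one zero), and these overpartitions are \emph{attached} to the basis partition, the two terms corresponding to whether the smallest part of the resulting partition has the form $j_j$; the overline rule (ii) is then read off from the convention that a repeated attached part size creates a zero-weighted-difference chain carrying at most one overline, on its smallest part. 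You instead prove the needed identity algebraically: after justifying---more explicitly than the paper does---that condition (ii) permits exactly $2^{t}$ overline patterns, where $t$ is the number of distinct values among $\psi_1,\dots,\psi_m$ (using the two key facts that successive weighted differences of $\pi$ equal $\psi_{k+1}-\psi_k$ when $r=0$, and that sizes strictly increase within a chain, so ``smallest part'' is well defined), you evaluate $\sum_{0\le\psi_1\le\cdots\le\psi_m}2^{t}q^{\psi_1+\cdots+\psi_m}$ outright by the block/telescoping computation. That computation checks out: the $z$-weighted sum is $\frac{z}{1-q^m}\prod_{j=1}^{m-1}\frac{1+(z-1)q^j}{1-q^j}$, which at $z=2$ collapses to $2(-q;q)_{m-1}/(q;q)_m=(-1;q)_m/(q;q)_m$. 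What the paper's route buys is transparency: the attached objects literally are overpartitions, so no generating-function identity needs separate proof. What your route buys is self-containedness and rigor at precisely the point where the paper is terse (why rule (ii) contributes the factor $(-1;q)_m$ independently of the basis), plus a free refinement: your parameter $z$ tracks the number of overlineable parts, which the paper's argument does not expose.
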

As an example, when $m=4,J(4)=10$, and the overpartitions in question are $4,\bar4,2+2,2+\bar2,2+1,2+\bar1,\bar2+1,\bar2+\bar1,1+1+1+1,1+1+1+\bar1.~L(4)=10$, and the overpartitions with $n$ copies of $n$ are $4_4,\bar4_4,4_3,\bar4_3,4_2,\bar4_2,4_1,\bar4_1,3_1+1_1,3_1+\bar1_1$.
\begin{proof}[Proof of Theorem \ref{theorem30}]
This result relies heavily on the discoveries chronicled in section \ref{section6}. First let us consider
\begin{equation}\label{7.2}
\frac{(-1;q)_m}{(q;q)_m}=\frac{(1+q)(1+q^2)\cdots(1+q^{m-1})(q^m+q^n)}{(1-q)(1-q^2)\cdots(1-q^{n-1})(1-q^n)}+\frac{(1+1)(1+q)(1+q^2)\cdots(1+q^{m-1})}{(1-q)(1-q^2)\cdots(1-q^{m-1})}.
\end{equation}
The first term on the right in \eqref{7.2} generates overpartitions with exactly $m$ positive parts. The second term generates overpartitions with exactly $m$ nonnegative parts (including exactly one zero).

This dissection of \eqref{7.2} then leads directly to the desired conclusion. Namely
\[\sum_{m\ge0}\frac{(-1;q)_mq^{m^2}}{(q;q)_m(q;q^2)_m}=1+\sum_{m\ge1}\left(\frac{(-q;q)_{m-1}(q^m+q^m)}{(q;q)_m}+\frac{(-1;q)_m}{(q;q)_{m-1}}\right)\frac{q^{m^2}}{(q;q^2)_m}.\]
Now we recall from Theorem \ref{theorem20} with $r=0$ that
\[\frac{q^{m^2}}{(q;q^2)_m}\]
is the generating function for partitions with $n$ copies of $n$ having $m$ parts with smallest part of the form $j_j$.

Now instead of attaching an ordinary partition to this basic partition (as is done in Corollary \ref{corollary25}), we attach overpartitions as generated in \eqref{7.2}.

First note that these are overpartitions that are being attached. Consequently this means that if there are several identical parts being attached the result will be a sequence of parts with successive differences still 0 and with only the smallest part in the chain possibly being overlined.

Second, the first term (as noted after \eqref{7.2}) produces those partitions where the smallest summand is not of the form $j_j$, and the second term accounts for those partitions where the smallest summand is of the form $j_j$.
\end{proof}
\section{Partitions with $n$ copies of $n$ and even subscripts}\label{section8}
It may at first appear rather artificial to restrict ourselves to only those $n_j$ with $j$ even $(1<j\le n)$. However, this restriction leads to a new interpretation of one of the more striking results in L. J. Slater’s compendium \cite[p. 161, eq. (86)]{23}
\begin{equation}\label{8.1}
\sum_{n=0}^\infty\frac{q^{2n^2}}{(q;q)_{2n}}=\prod_{\substack{n=1\\n\equiv\pm2,\pm3,\pm4,\pm5\pmod{16}}}^\infty\frac{1}{1-q^n}.
\end{equation}
The right hand side of \eqref{8.1} is clearly the generating function for $G(n)$, the number of partitions of $n$ into parts $\equiv\pm2,\pm3,\pm4,\pm5\pmod{16}$. On the other hand, noting that
\[2n^2=1+1+3+3+\cdots+(2n-1)+(2n-1),\]
we may use the argument used to produce \eqref{1.3} to see that the left hand side produces partitions
\[b_1+b_2+b_3+b_4+\cdots+b_{2j-1}+b_{2j}\]
Into nondecreasing parts with $b_3-b_2\ge2,b_3-b_2\ge2,b_5-b_4\ge2,b_7-b_6\ge2,\dots$ (c.f. \cite{19}).

The reason that we renew our study of \eqref{8.1} is that it fits perfectly into the theme of the last two sections.
\begin{theorem}\label{theorem31}
Let $H(n)$ denote the number of partitions of $n$ using $n$ copies of $n$ but (i) restricted to even subscripts, (ii) the weighted differences between successive parts is $\ge0$, and (iii) excluding adjacent pairs $n_i,m_j$ with $n$ and $m$ both odd and $((n_i-m_j))=0$. Then for $n\ge1$,
\[G(n)=H(n).\]
\end{theorem}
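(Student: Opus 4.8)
The plan is to show that $\sum_{n\ge0}H(n)q^n$ equals the left-hand side $\sum_{m\ge0}q^{2m^2}/(q;q)_{2m}$ of the Slater identity \eqref{8.1}; since the product on the right of \eqref{8.1} is exactly $\sum_{n\ge0}G(n)q^n$, this yields $G(n)=H(n)$ at once. The partitions counted by $H$ are partitions with $n$ copies of $n$ having even subscripts, successive weighted differences $\ge0$, and the extra exclusion (iii). I would first invoke the decomposition of Lemma \ref{lemma22} with $r=0$: a partition of this type with $m$ parts splits uniquely as a basis partition $\bar\pi$ (all successive weighted differences exactly $0$, smallest part of the form $j_j$) together with an ordinary partition $\psi_1\le\cdots\le\psi_m$ of nonnegative parts attached to the values. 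Because Lemma \ref{lemma22} leaves the subscript set unchanged, $\bar\pi$ still has even subscripts, so the even-subscript restriction is inherited by the basis.

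Next I would compute the basis generating function. Writing the subscripts as $s_i=2a_i$ with $a_i\ge1$, the $k$-th part of $\bar\pi$ has value $s_k+2(s_1+\cdots+s_{k-1})$, so the total weight is $\sum_i s_i(2m-2i+1)=2m^2+2\sum_i(a_i-1)(2m-2i+1)$, and the exponents $2(2m-2i+1)$ run through $2,6,\dots,4m-2$. Hence the basis generating function for $m$ parts is $q^{2m^2}/(q^2;q^4)_m$. Since every basis value is even, a full part $v_k+\psi_k$ is odd precisely when $\psi_k$ is odd, and the weighted difference between two successive full parts equals $\psi_{k+1}-\psi_k$; therefore restriction (iii) forbids exactly the configurations $\psi_k=\psi_{k+1}$ with both odd, i.e. it requires the \emph{odd} parts of $\psi$ to be distinct. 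As this constraint depends only on $\psi$ and not on the subscripts, the count factors, and the attachment contributes $A_m$, the generating function for partitions into at most $m$ parts whose odd parts are distinct.

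The main obstacle is the evaluation $A_m=(q^2;q^4)_m/(q;q)_{2m}$. I would prove it by splitting such a partition into its $t$ distinct odd parts and its $e$ positive even parts, giving $A_m=\sum_{t+e\le m}q^{t^2+2e}/\bigl((q^2;q^2)_t(q^2;q^2)_e\bigr)$. The inner sum $\sum_{e=0}^{m-t}q^{2e}/(q^2;q^2)_e$ telescopes to $1/(q^2;q^2)_{m-t}$, leaving $A_m=(q^2;q^2)_m^{-1}\sum_{t=0}^{m}q^{t^2}\genfrac[]{0pt}{0}{m}{t}_2$, and the finite $q$-binomial theorem \cite[p.~36]{9} (base $q^2$, with $z=q$, using $q^{t^2}=q^{2\binom{t}{2}+t}$) evaluates the sum as $(-q;q^2)_m$. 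Since $(-q;q^2)_m(q;q^2)_m=(q^2;q^4)_m$ and $(q;q)_{2m}=(q;q^2)_m(q^2;q^2)_m$, this gives $A_m=(-q;q^2)_m/(q^2;q^2)_m=(q^2;q^4)_m/(q;q)_{2m}$, exactly the reciprocal of the basis factor.

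Finally I would assemble the pieces exactly as in Corollary \ref{corollary25}: the generating function for $H$-partitions with exactly $m$ parts is the product of the basis factor and the attachment factor, $\dfrac{q^{2m^2}}{(q^2;q^4)_m}\cdot\dfrac{(q^2;q^4)_m}{(q;q)_{2m}}=\dfrac{q^{2m^2}}{(q;q)_{2m}}$, and summing over $m$ yields $\sum_{m\ge0}q^{2m^2}/(q;q)_{2m}$. Invoking \eqref{8.1} identifies this with $\sum_{n\ge0}G(n)q^n$, which proves $G(n)=H(n)$. I expect the only genuinely delicate points to be the parity bookkeeping that turns (iii) into ``odd parts of $\psi$ distinct'' and the finite $q$-binomial evaluation of $A_m$; the remainder follows the template already established in Theorem \ref{theorem20} and Corollary \ref{corollary25}.
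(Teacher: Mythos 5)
Your proposal is correct and follows essentially the same route as the paper: both decompose the left side of \eqref{8.1} as the product of the even-subscript basis factor $q^{2m^2}/(q^2;q^4)_m$ (the dilation $\beta_0(m,q^2)$ of Theorem \ref{theorem20}) and the attachment factor $(-q;q^2)_m/(q^2;q^2)_m$, observe via Lemma \ref{lemma22} with $r=0$ that condition (iii) translates exactly into the attached partition having distinct odd parts, and finish with Slater's identity. The only cosmetic difference is that you verify the attachment factor by an odd/even split plus the finite $q$-binomial theorem, where the paper simply cites conjugation of $2$-modular representations of partitions without repeated odd parts.
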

As an example, $G(10)=7$ where the relevant partitions are $5+5,5+3+2,4+4+2,4+3+3,4+2+2+2,3+3+2+2,2+2+2+2+2.$ $H(10)=7$ where the relevant partitions are $10_{10},10_8,10_6,10_4,10_2,8_2+2_2,8_4+2_2$. Note that $7_2+3_2$ is disallowed because $((7_2-3_2))=0$ with both 7 and 3 odd.
\begin{proof}[Proof of Theorem \ref{theorem31}]
We rewrite the left side of \eqref{8.1} as
\[\sum_{m\ge0}\frac{q^{2m^2}}{(q;q)_{2m}}=\sum_{m\ge0}\frac{(-q;q^2)_m}{(q^2;q^2)_m}\frac{q^{2m^2}}{(q^2;q^4)_m}.\]
Now by conjugation of the 2 modular representations of partitions without repeated odd parts, we see that
\begin{equation}\label{8.2}
\frac{(-q;q^2)_m}{(q^2;q^2)_m}
\end{equation}
is the generating function for partitions with exactly $n$ nonnegative parts with no repeated odd parts. On the other hand,
\begin{equation}\label{8.3}
\frac{q^{2m^2}}{(q^2;q^4)_m}
\end{equation}
is merely the dilation $(q\to q^2)$ of $\beta_0(m,q)$ from Theorem \ref{theorem20}. Thus $\beta_0(m,q^2)$ is the generating function for partitions into $n$ copies of $N$ where summands are of the form $(2r)_{2s}~(1\le s\le r)$.

Now we proceed here exactly as before in Lemma \ref{lemma24} and Corollary \ref{corollary25}. The attachment of the ordinary partitions generated by \eqref{8.2} to the partitions with $n$ copies of $n$ as generated by \eqref{8.3} yields partitions into $n$ copies of $n$ with $m$ parts subject to the requirement that the weighted difference between parts is nonnegative, and that two successive parts $r_i$ and $s_j$ cannot have $((r_i-s_j))=0$ with both $r$ and $s$ odd.
\end{proof}
\section{Conclusion}\label{conclusion}
There are several points to be made in summary.

First, we have chosen a sampling of possible applications of this method to make clear its widespread utility. Thus there are many instances of Theorem \ref{theorem1} that have yet to be considered. We have treated only a few of these.

Second, there are other examples of SIP classes. Indeed, the inspiration for this paper arose from \cite{12}. The SIP class in \cite{12} is the set of integer partitions in which the parts are distinct, the smallest is even, and there are no consecutive odd parts. This SIP class is \underline{not} an instance of Theorem \ref{theorem1}. Consequently, it is surely valuable to explore SIPs not included in Theorem \ref{theorem1}.

Finally, there are other theorems that cry out for an analogous theory. For example, the mod 7 instance of the generalization of the Rogers-Ramanujan identities given in \cite{6} may be stated:
\begin{equation}\label{9.1}
	\sum_{N\ge0}\frac{q^{N^2}}{(q;q)_N}\sum_{m=0}^N\genfrac[]{0pt}{0}{N}{m}_1q^{m^2} =\prod_{\substack{n=1\\n\not\equiv0,\pm3\pmod7}}^\infty\frac{1}{1-q^n}.
\end{equation}
There are several interpretations of the left hand side of \eqref{6.1}~(\cite{8}, \cite{10}, \cite{18}); however none seems to lend itself to an SIP-style interpretation.
If such an interpretation could be found, this would open many further possibilities.
\nocite{*}

\bibliographystyle{abbrv}
	\bibliography{reference}

\noindent The Pennsylvania State University\\
University Park, PA 16802\\
gea1@psu.edu
\vspace{1em}\\

\end{document}